\numberwithin{equation}{section}
\newcommand{\Or}{\mathcal{O}}
\newcommand{\Pb}{\mathbbm{P}}
\newcommand{\Id}{\mathbbm{1}}
\newcommand{\I}{{\rm i}}
\newcommand{\R}{\mathbb{R}}
\newcommand{\w}{\bf w}
\renewcommand{\b}{\bf b}
\newcommand{\Z}{\mathbb{Z}}
\renewcommand{\Im}{\mathrm{Im}}
\DeclareMathOperator{\sgn}{sign}
\newtheorem{prop}{Proposition}[section]
\newtheorem{thm}[prop]{Theorem}
\newtheorem{lem}[prop]{Lemma}
\newtheorem{defin}[prop]{Definition}
\newtheorem{cor}[prop]{Corollary}
\newtheorem{cla}[prop]{Claim}
\newtheorem{rem}[prop]{Remark}
\title{Speed and fluctuations for some  driven  dimer models }
\author{S. Chhita\thanks{Department of Mathematical Sciences, Durham University, Stockton Road, Durham, DH1 3LE, UK. E-mail: {\tt sunil.chhita@durham.ac.uk}} \and P.L. Ferrari\thanks{Institute for Applied Mathematics, Bonn University, Endenicher Allee 60, 53115 Bonn, Germany. E-mail: {\tt ferrari@uni-bonn.de}} \and F.L. Toninelli\thanks{CNRS and Institut Camille Jordan, Universit\'e Lyon 1, 43 bd du 11 novembre 1918, 69622 Villeurbanne, France. E-mail: {\tt toninelli@math.univ-lyon1.fr}}}
\date{}
\begin{document}

\maketitle
\sloppy
\begin{abstract}
  We consider driven dimer models on the square and honeycomb graphs,
  starting from a stationary Gibbs measure. Each model can be thought
  of as a two dimensional stochastic growth model of an interface,
  belonging to the anisotropic KPZ universality class. We use a
  combinatorial approach to determine the speed of growth and show
  logarithmic growth in time of the variance of the height function fluctuations.
\end{abstract}

\section{Introduction}\label{SectIntro}

We consider two-dimensional stochastic growth models in the
anisotropic KPZ universality class~\cite{Wol91}. Stochastic interface
growth models have a random local growth mechanism which is
(effectively) local in space and time, $t$, but with smoothing
mechanisms that ensure deterministic growth under hydrodynamic
scalings. In two dimensions, the average speed of growth $v(\rho)$ of the
interface in a stationary state can be parameterized by the slope
$\rho=(\rho_1,\rho_2)$ of the height function.  The anisotropic KPZ
universality class contains the models for which the signature of the
Hessian of the speed of growth is $(+,-)$. This is in contrast with
the usual, isotropic, KPZ universality class where the signature is
$(+,+)$ or $(-,-)$.  In the anisotropic case, it is expected that the
fluctuations of the height function behave asymptotically like
$\sqrt{\log t}$ as $t$ grows~\cite{Wol91}. This has been analytically verified for
some exactly solvable models~\cite{PS97,BF08,BF08b} and confirmed by numerical
studies~\cite{KKK98,HHA92}. Furthermore, it is expected that on large
space-time scales and modulo a linear transformation of space and time
coordinates, the height function fluctuations of the stationary
process have the same asymptotic correlations as those found in the
stochastic heat equation with additive noise (see~\cite{BCT15,BCF16}
for recent works).

In this paper we consider two dimer models on infinite bipartite
graphs $\Z^2$ and ${\cal H}$ (the honeycomb graph). Dimers, that are
viewed as particles, perform long-range jumps with asymmetric
rates. For the honeycomb graph, the dynamics were defined
in~\cite{BF08} and later extended to a partially asymmetric situation
in~\cite{Ton15}. The dynamics on $\mathbb Z^2$ was introduced in
\cite{Ton15}.  For both these models, translation-invariant stationary
measures for interface gradients are Gibbs measures on dimer
configurations with prescribed dimer
densities~\cite{BS08,KOS03,KenLectures}.  In~\cite{BF08}, the specific
prescribed initial conditions were not stationary but this choice had
the useful property that in a large enough subset of space-time, dimer
correlation functions were determinantal. This allowed, among others,
the computation of the law of large numbers and to determine that the
variance of the height function behaves asymptotically like $\log t$
and has Gaussian fluctuations on that scale.  However, this
determinantal property for space-time correlations, which allowed for
explicit computations, is no longer true for the partially asymmetric
dynamics or for those with stationary initial conditions.

In this paper we consider stationary initial conditions and obtain two
results, that apply equally to the totally asymmetric or to the
partially asymmetric situation. The first one is the speed of growth
$v^{\mathbb Z^2}(\rho)$ for the model on $\Z^2$
(Theorem~\ref{thm:square}). The difficulty here is to find a compact
and explicit formula for the speed of growth, since by definition of
the dynamics, $v^{\mathbb Z^2}(\rho)$  is given by an infinite sum of
probabilities of certain dimer configurations and therefore by an
infinite sum of determinants involving the inverse Kasteleyn
matrix. To obtain this result we mimic the approach
used for the honeycomb lattice in~\cite{CF15}. There, a combinatorial
argument showed that the infinite sum reduces to a single entry of the
inverse Kasteleyn matrix, leading to the explicit formula
(\ref{eq:cf}). For $\Z^2$, this is no longer the case, but we are able
to prove that the infinite sum is given in terms of a few explicit
entries of the inverse Kasteleyn matrix. As a side result, we verify
explicitly that the signature of the Hessian of
$v^{\mathbb Z^2}(\rho)$ is $(+,-)$.

The second result concerns the logarithmic growth of variance of the
height function for the honeycomb graph, see Theorem~\ref{th:varianza}
(the method can be extended to the dynamics on $\mathbb Z^2$ but in
order not to overload this work we skip this). This result was
partially proved in~\cite{Ton15}, with a technical restriction on
slope $\rho$. Our new approach simplifies the proof contained
in~\cite{Ton15} and it extends its domain of validity to the full set
of allowed slopes.

The rest of the paper is organized as follows. In Section~\ref{sectResults} we define the models and give the results. Section~\ref{sectBackground} contains the background on dimers models. Theorem~\ref{thm:square} on the speed of growth on $\Z^2$ is proved in Section~\ref{sec:vz2}. Theorem~\ref{th:varianza} on the variance is proved in Section~\ref{sectThmVar}.

\subsubsection*{Acknowledgements}
F.  T.  was partially funded by the ANR-15-CE40-0020-03 Grant LSD, by the CNRS
PICS grant ``Interfaces al\'eatoires discr\`etes et dynamiques de
Glauber'' and by MIT-France Seed Fund ``Two-dimensional Interface
Growth and Anisotropic KPZ Equation''. P.F. was supported by the German Research Foundation as part of the SFB 1060--B04 project.

\section{The growth models and the results}\label{sectResults}

\subsection{Perfect matchings and height function}
We are interested in two infinite, bipartite planar graphs
$\mathcal G=(\mathcal V,\mathcal E)$ in this work: the grid $\mathbb Z^2$ and the honeycomb
lattice $\mathcal H$. In both cases, we let $\mathcal M_{\mathcal G}$
denote the set of perfect matchings or dimer coverings of
$\mathcal G$, i.e., subsets of edges in $\mathcal E$ (dimers) such that each vertex is
incident to exactly one edge. Both graphs are bipartite, so we can fix
a $2$-coloring (say, black and white) of their vertices $\mathcal V$, see
Figures~\ref{fig:hex} and~\ref{fig:threads}. We denote $W_{\mathcal G}$ (resp.\ $B_{\mathcal G}$) to be the set of white (resp.\ black) vertices of $\mathcal G$.

Associated to each dimer covering $m\in \mathcal M_{\mathcal G}$, there  is a
height function $h$ defined on faces of $\mathcal G$, as follows: $h$
is fixed to zero at some given face $x_0$ of $\mathcal G$ (the
``origin'') and its gradient are given by
 \begin{equation}
\label{deltaH}
 h(x)-h(y)=\sum_{e\in C_{x\to y}} \sigma_e({\bf 1}_{e\in m}-c(e))
 \end{equation}
where: $x,y$ are faces of $\mathcal G$, $C_{x\to y}$ is any nearest-neighbor path from $x$ to $y$ (the r.h.s.\ of \eqref{deltaH} does not depend on the choice of $C_{x\to y}$), the sum runs over edges crossed by $C_{x\to y}$,
$\sigma_e$ equals $+1$ (resp.\ $-1$) if $e$ is crossed with the white vertex on the right (resp.\ left) and $c(\cdot)$ is a function defined on the edges of $\mathcal G$, such that for any $v\in\mathcal V$,
 \begin{equation}
\sum_{e:e\sim v}c(e)=1,
 \end{equation}
where $e\sim v$ means that $e$ is incident to $v$.  A standard choice
for the square lattice is $c(e)\equiv 1/4$; for the hexagonal lattice,
we let $c(e)=1$ if $e$ is horizontal and $c(e)=0$ otherwise.

As we recall in more detail in Section~\ref{subsec:Gibbs} below, for
both graphs there exists an open polygon
$P^{\mathcal G}\subset \mathbb R^2$ such that for every
$\rho=(\rho_1,\rho_2)\in P^{\mathcal G}$ there exists a unique
translation invariant and ergodic Gibbs probability measure on dimer
coverings of $\mathcal G$, denoted $\pi^{\mathcal G}_\rho$. With the choice of coordinates we make in this work (see Section \ref{subsec:Gibbs}), the
polygons $P^{\mathcal G}$ for the two graphs are as follows:
\begin{defin}
\label{def:NP}
$P^{\mathcal H}$ is the open triangle in $\mathbb R^2$ with vertices
$(0,0),(0,1),(1,1)$, and $P^{\mathbb Z^2}$ is the open square in
$\mathbb R^2$ with vertices $(\pm1/2,\pm1/2)$.
\end{defin}

\subsection{Particles and interlacement conditions}

A common feature of the two graphs $\mathbb Z^2$ and $\mathcal H$,
that makes them special with respect to other planar, bipartite
graphs, is that to any $m\in\mathcal M_{\mathcal G}$ one can associate
a collection of ``interlaced particles''. First of all, we partition the set
of faces of $\mathcal G$ into disjoint ``columns''
$C_\ell,\ell\in\mathbb Z$. In the case of $\mathcal H$, a column
$C_\ell$ consists in the set of faces with the same horizontal
coordinate, while for $\mathbb Z^2$ it is a zig-zag path as depicted
in Fig~\ref{fig:threads}.
\begin{figure}
\begin{center}
\includegraphics[height=6cm]{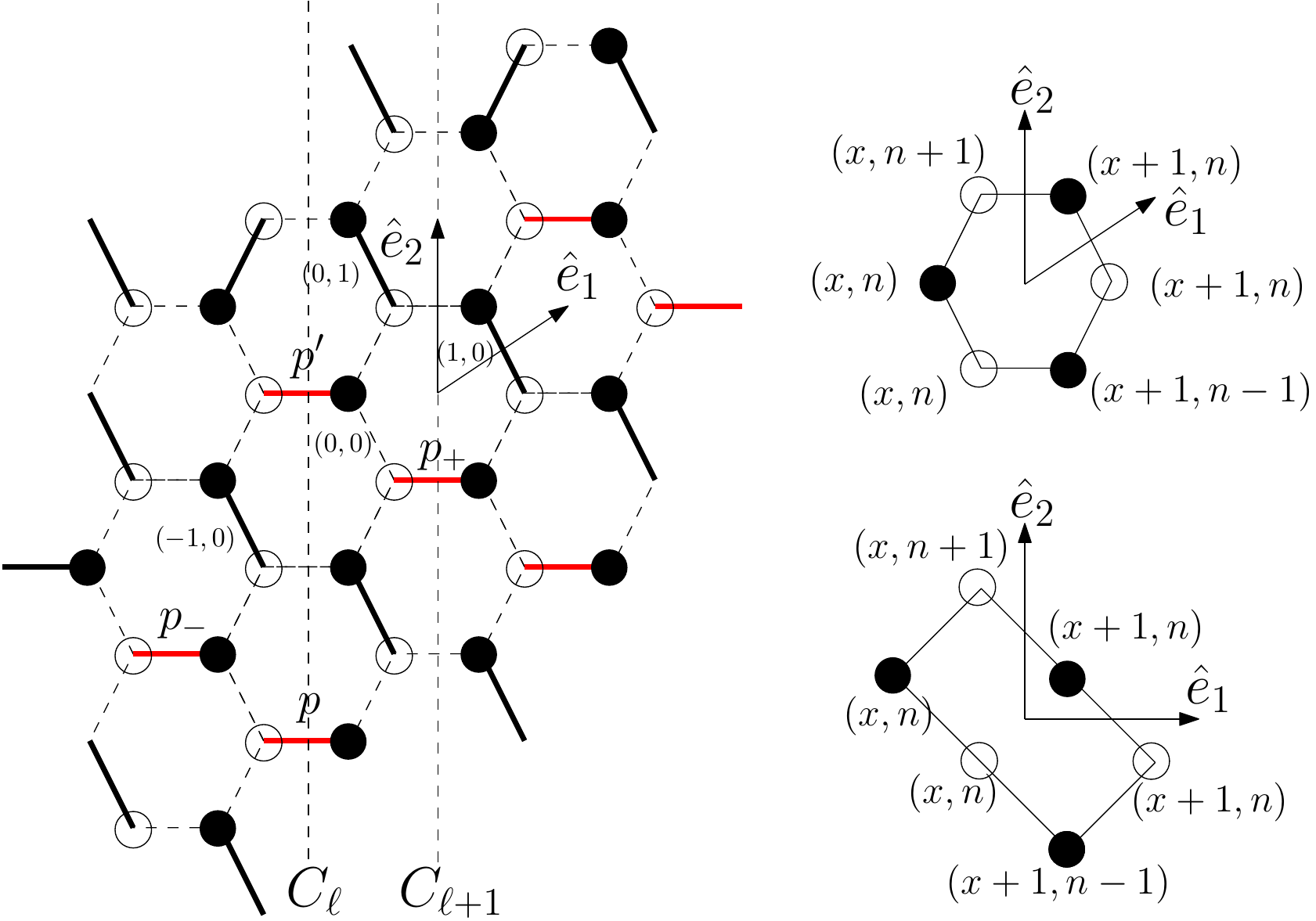}
\caption{The left figure shows the hexagonal graph $\mathcal H$ with the axes
  $\hat e_1,\hat e_2$ and the columns $C_\ell$. Coordinates
  $(x_1,x_2)$ are the same for the black and white vertices on the
  same north-west oriented edge. Particles (i.e., horizontal dimers)
  are marked in red. Particles $p,p'$ on column $C_{\ell}$ are
  vertically interlaced with particles $p_\pm$ on $C_{\ell\pm1}$. In
  Section~\ref{sec:varianza}, we will re-draw hexagonal faces as
  rectangular ones, as in the drawing on the  right side.}
\label{fig:hex}
\end{center}
\end{figure}
\begin{figure}
\begin{center}
\includegraphics[height=7cm]{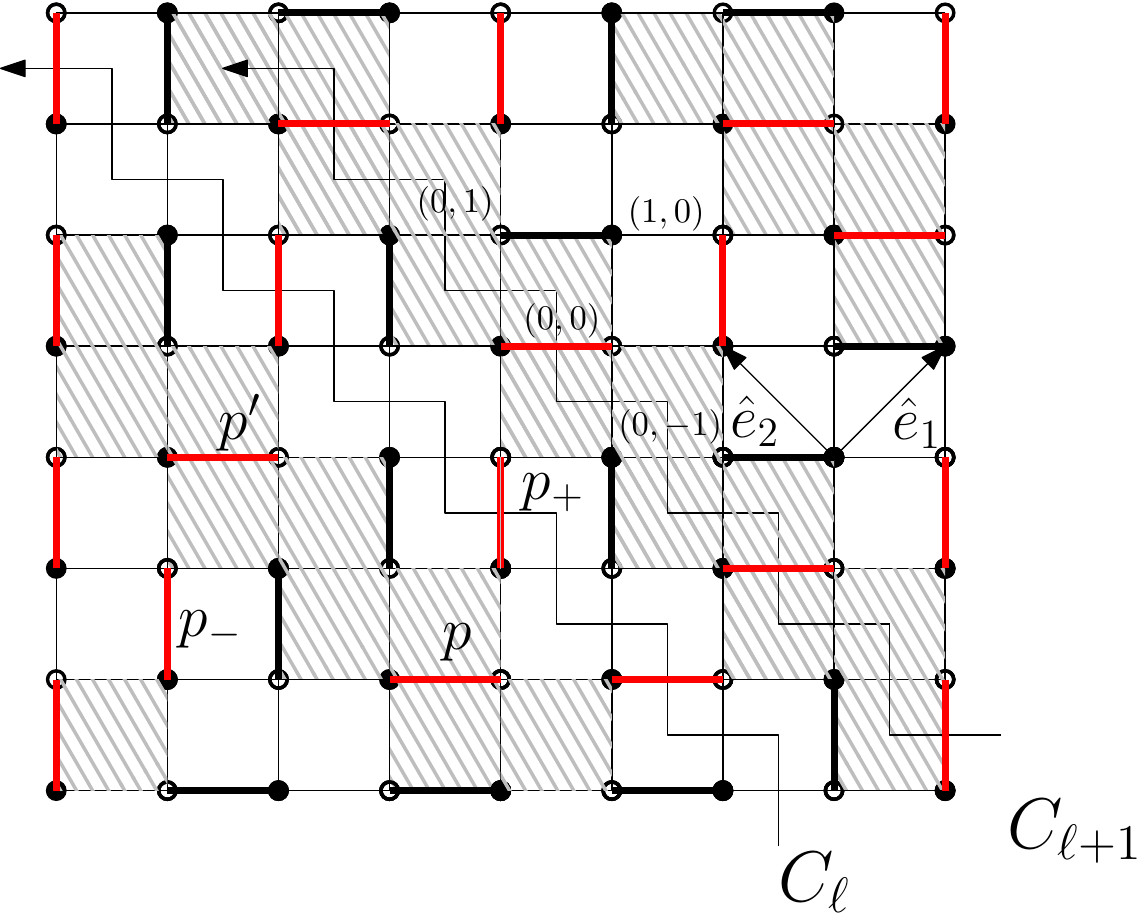}
\caption{The square lattice $\mathbb Z^2$ with the axes $\hat e_1,\hat e_2$ and the ``columns'' $C_\ell$. Coordinates
$(x_1,x_2)$ are the same for a black and the white vertex just to its right. Thick edges are dimers, and transversal dimers (or particles) are drawn in red.}
\label{fig:threads}
\end{center}
\end{figure}
We call $Y_\ell$ the set of vertices of $\mathcal G$ shared by $C_\ell$ and
 $C_{\ell+1}$.
 Vertices $v\in Y_\ell$ can be ordered in a natural way and we will say that $v_1<v_2$ if $v_1$ precedes
$v_2$ in the upward direction (for $\mathcal H$) or in the up-left direction of Figure~\ref{fig:threads} (for $\mathbb Z^2$).
An edge $e$ of $\mathcal G$ will be called ``transversal'' if it has one endpoint on $Y_{\ell}$ and the other
on $Y_{\ell+1}$ for some $\ell$. Dimers on transversal edges will be called ``particles''.

Given two particles $p$ and $p'$, each with one endpoint (say $v,v'$
respectively) on the same $Y_\ell$, let us say that $p'$ is higher
than $p$ (we write $p<p'$) if $v<v'$. The following interlacement
condition is easily verified both for $\mathcal H$ and $\mathbb Z^2$:
given two particles $p,p'$ on the same column $C_\ell$ and verifying
$p<p'$, there exists a particle $p_-$ on $C_{\ell-1}$ and a particle
$p_+$ on $C_{\ell+1}$ such that $p<p_-<p'$, $p<p_+<p'$. See Figures
\ref{fig:hex} and~\ref{fig:threads}.

Both on $\mathbb Z^2$ and on
$\mathcal H$ it is easy to check that, under the assumption that every
$C_\ell$ contains at least one particle, the whole dimer configuration
is uniquely determined by the particle configurations. In the
situation we are interested in, there are almost surely infinitely
many particles on each $C_\ell$; therefore, we will implicitly
identify a dimer configuration $m\in\mathcal M_{\mathcal G}$ and the
corresponding particle configuration.

\subsection{Dynamics and new results}
We describe here the growth dynamics of \cite{Ton15} in a unified way for
$\mathcal G=\mathbb Z^2$ and $\mathcal G=\mathcal H$. We need some preliminary notation. Given a
transversal edge $e$ on column $C_\ell$, let $p(e)$ denote the highest
particle in column $C_\ell$ that is strictly below $e$. Given a
configuration $m\in\mathcal M_{\mathcal G}$, we say that ``particle $p(e)$ can reach edge $e$'' if the
configuration $m'$ obtained by moving $p(e)$ to edge $e$ while all other
particles positions are unchanged still satisfies the particle interlacement constraints, i.e., $m'\in \mathcal M_{\mathcal G}$.

The continuous time Markov chain of \cite{Ton15}, in its totally asymmetric
version, can be informally described as follows.  To each transversal
edge $e$ of $\mathcal G$ is associated an i.i.d. exponential clock of
mean $1$. When the clock at $e$ rings, if particle $p(e)$ can reach
$e$ without violating the interlacement constraints then it is moved
there. If $p(e)$ cannot reach $e$, then nothing happens.

Note that the size of particle jumps are unbounded, so it is not
a-priori obvious that the definition of the Markov process is
well-posed. However, one of the results of \cite{Ton15} is that given any
$\rho\in P^{\mathcal G}$, for almost every initial condition sampled
from the Gibbs measure $\pi^{\mathcal G}_\rho$ the dynamics is
well-defined (i.e., almost surely no particle travels an infinite
distance in finite time). Also, it is proved there that the measures
$\pi^{\mathcal G}_\rho$ are stationary for the dynamics. We let
$\nu^{\mathcal G}_\rho$ denote the law of the stationary process
started from $\pi^{\mathcal G}_\rho$.

Note that when $p(e)$ is moved from its current position in column say $C_\ell$ to the edge $e$ in the same column, it jumps over a certain number $n\ge 1$ of faces of $C_\ell$.
We define the ``integrated current'' $J(t)$ as the total number of particles that jump across a given face of the graph, say across the face $x_0$ that was chosen as origin, from time $0$ to time $t$ ($J(t)$  is trivially related to  the height change at $x_0$).
In \cite{Ton15} it was proven:
\begin{thm}
 For every $\rho \in P^{\mathcal G}$, there exists $v^{\mathcal G}(\rho)>0$ such that
 \begin{equation}
 \label{eq:veloc}
\nu^{\mathcal G}_\rho(J(t))=t v^{\mathcal G}(\rho).
 \end{equation}
Moreover, if $\mathcal H=\mathcal G$ then there exists a non-empty subset of $A\subset P^{\mathcal G}$ such that,
for every $\rho\in A$,
\begin{equation}
 \label{eq:var1}
\limsup_{t\to\infty} \frac{{\rm Var}_{{\nu^{\mathcal G}_\rho}}(J(t))}{\log t}<\infty.
\end{equation}
\end{thm}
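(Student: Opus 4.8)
The plan is to prove the two assertions by different means: the speed \eqref{eq:veloc} by a soft stationarity argument, and the variance bound by a structural one. For \eqref{eq:veloc} set $f(t):=\nu^{\mathcal G}_\rho(J(t))$. Because $\pi^{\mathcal G}_\rho$ is stationary, the process with law $\nu^{\mathcal G}_\rho$ is invariant under time translation, so the current over $[t,t+s]$ has the same distribution as the current over $[0,s]$; taking expectations yields the additivity $f(t+s)=f(t)+f(s)$. Since $f$ is finite (the dynamics is a.s.\ well defined) and, $J$ being a counting process, nondecreasing, Cauchy's functional equation forces $f(t)=v^{\mathcal G}(\rho)\,t$ with $v^{\mathcal G}(\rho)=f(1)\ge0$. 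To identify the constant as the instantaneous crossing rate I would write
\[
v^{\mathcal G}(\rho)=\sum_{e}\pi^{\mathcal G}_\rho\big(\text{the jump of }p(e)\text{ to }e\text{ is legal and crosses }x_0\big),
\]
the sum running over transversal edges $e$ above $x_0$, each summand the probability of an explicit cylinder event. Strict positivity is then immediate: for $\rho$ in the open polygon $P^{\mathcal G}$ the Gibbs measure charges every finite pattern, in particular a local configuration enabling a legal crossing of $x_0$, so at least one summand is positive.

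For the variance I would first reduce to bounding ${\rm Var}_{\nu^{\mathcal H}_\rho}\big(h(x_0,t)\big)$, since $J(t)$ differs from the height increment at $x_0$ by a deterministic constant. A revealing but insufficient step is the compensated decomposition $J(t)=N(t)+\int_0^t R(s)\,\D s$, where $R(s)$ is the instantaneous crossing rate and $N$ is a martingale with $\langle N\rangle_t=\int_0^t R(s)\,\D s$; stationarity gives $\E[N(t)^2]=\int_0^t\E[R(s)]\,\D s=v^{\mathcal H}(\rho)\,t$. Thus the martingale part alone already contributes a variance linear in $t$, and the true $\log t$ behaviour can only arise from an almost exact cancellation between $N(t)$ and $\int_0^tR$, i.e.\ from strongly negative space--time correlations. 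This is the real content of the statement and is out of reach of the soft argument above.

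To realize this cancellation I would compare the fluctuation field with a linear, Edwards--Wilkinson type dynamics. The inputs special to $\mathcal H$ are the attractivity (monotonicity in the height order) of the dynamics and the fact that at each fixed time the law is the determinantal Gibbs measure $\pi^{\mathcal H}_\rho$, so that all \emph{equal-time} correlations are explicit. Concretely I would couple the stationary process with copies perturbed by a single extra particle and follow the discrepancies; attractivity lets a single discrepancy evolve as a well-defined second-class particle, and the equal-time correlations quantify the density and the diffusive spreading of discrepancies. In these terms $h(x_0,t)$ is represented as a sum of nearly independent noise contributions propagated by the transition kernel $G_u$ of the discrepancy, so that ${\rm Var}\big(h(x_0,t)\big)\approx\sum_{1\le u\le t}\|G_u\|_2^2$; since the spreading is two-dimensional and diffusive, $\|G_u\|_2^2\sim 1/u$, and $\sum_{1\le u\le t}1/u\sim\log t$ --- exactly the logarithm of the two-dimensional additive stochastic heat equation.

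The hard part is making this comparison rigorous with the correct diffusive kernel bound, and doing so uniformly in $\rho$. One must show that the nonlinearity of the true dynamics is irrelevant at the level of the variance and that the discrepancy spreads diffusively with $\|G_u\|_2^2=\Or(1/u)$; controlling this spreading uniformly over the whole triangle $P^{\mathcal H}$ is exactly where monotonicity or the sharpness of the kernel estimate degrades, forcing the restriction of $\rho$ to a nonempty subset $A$ as in \cite{Ton15}. The combinatorial route developed in the present paper is designed precisely to bypass this uniform control.
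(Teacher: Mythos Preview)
This theorem is not proved in the present paper: it is quoted from \cite{Ton15}. The paper only indicates, in Section~\ref{sec:vacu}, what the mechanism of the proof in \cite{Ton15} is, and then uses that mechanism (Theorem~\ref{th:varV}) as the starting point for its own improvement, Theorem~\ref{th:varianza}. So the comparison below is with the route of \cite{Ton15} as described here.

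\medskip
\textbf{Speed.} Your stationarity argument for \eqref{eq:veloc} is correct and is essentially the argument: additivity of $t\mapsto \nu^{\mathcal G}_\rho(J(t))$ plus monotonicity gives linearity, and the identification of the slope with the sum of cylinder probabilities is exactly formula \eqref{vz1}. One small point: ``the dynamics is a.s.\ well defined'' gives that $J(t)<\infty$ a.s., not that $\nu^{\mathcal G}_\rho(J(t))<\infty$; the latter is what the convergence discussion after \eqref{vz22} (decay of $\pi^{\mathcal G}_\rho(e_1,\dots,e_n)$) actually establishes. Strict positivity via ``the Gibbs measure charges every finite pattern'' is fine.

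\medskip
\textbf{Variance.} Here your proposal diverges from the actual proof and, as you yourself say, does not reach a proof. The route in \cite{Ton15} (see Section~\ref{sec:vacu}) does \emph{not} go through a martingale/compensator decomposition, second-class particles, or an Edwards--Wilkinson comparison with a discrepancy kernel $G_u$. Instead it performs a \emph{reduction from time to space}: one shows (this is \cite[Sec.~9 and App.~A]{Ton15}) that bounding $\mathrm{Var}_{\nu^{\mathcal H}_\rho}(J(t))$ by $C\log t$ follows from the purely \emph{equilibrium} estimate
\[
\mathrm{Var}_{\pi^{\mathcal H}_\rho}\bigg(\sum_{e\in\Lambda_L}V(e)\bigg)\le c\,L^2\log L,
\]
where $V(e)$ is the jump length defined in Section~\ref{sec:vacu} and $\Lambda_L$ is an $L\times L$ box of horizontal edges (Theorem~\ref{th:varV}). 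The $\log t$ then comes from the logarithmic growth of dimer--dimer correlations under $\pi^{\mathcal H}_\rho$ (the $1/\mathrm{distance}$ decay of $\overline K^{-1}$, cf.\ \eqref{eq:hex:integralbound}), not from a diffusive $\|G_u\|_2^2\sim 1/u$. The restriction to a subset $A\subset P^{\mathcal H}$ in \cite{Ton15} arises because their proof of this equilibrium variance bound required extra conditions on $\rho$; the present paper removes that restriction by a different, combinatorial proof of the same equilibrium bound.

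Your second-class particle/EW heuristic is a reasonable physical picture of why $\log t$ should appear, but it is not the mechanism used, and the step you flag as ``hard'' (a uniform diffusive bound on the discrepancy kernel for these long-range dimer dynamics) is, to my knowledge, not available. So as a proof the variance part is a genuine gap; the correct route is the equilibrium reduction above.
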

Later, in \cite{CF15}, the function $v^{\mathcal G}(\rho)$ for $\mathcal G=\mathcal H$ was computed explicitly\footnote{In this work we use different conventions as in \cite{Ton15} for lattice coordinates and this is the reason why formula
\eqref{eq:cf} looks different from formula (3.6) of \cite{Ton15}}:
\begin{equation}
 \label{eq:cf}
 v^{\mathcal H}(\rho)=\frac1\pi\frac{\sin(\pi\rho_1)\sin(\pi (\rho_2-\rho_1))}{\sin(\pi\rho_2)}.
\end{equation}

Our main results here complete the above picture as follows:
\begin{thm} \label{thm:square}
For the dynamics on $\mathbb{Z}^2$, the speed of growth is given by
\begin{equation}
 \label{vz2}
 v^{\mathbb Z^2}(\rho) =\frac{1}{\pi}\sin\psi_1\left(\frac{ \sin\psi_1}{ \tan\psi_2} +\sqrt{ 1+\frac{ \sin^2\psi_1}{ \tan^2\psi_2}}\right)
 \end{equation}
	where $\psi_i=(\rho_i+1/2)\pi$ for $i \in \{1,2\}$ takes value in $[0,\pi]$.
\end{thm}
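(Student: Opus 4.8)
The plan is to read off $v^{\Z^2}(\rho)$ as the expected instantaneous current in the stationary state, to write this current as an infinite sum of local dimer probabilities, and then---as in \cite{CF15} for the honeycomb---to collapse that sum onto a finite combination of entries of the inverse Kasteleyn matrix $K^{-1}$, which can be evaluated explicitly. Concretely, by stationarity $v^{\Z^2}(\rho)$ equals the expected rate at which particles cross the origin face $x_0$; let $C_{\ell_0}$ be the column containing $x_0$. The clock at each transversal edge $e$ rings at rate $1$ and, when it does, moves $p(e)$ to $e$ provided the move is legal, and this jump crosses $x_0$ exactly when $p(e)$ lies below $x_0$ while $e$ lies above it. Hence, by linearity of expectation,
\begin{equation}
v^{\Z^2}(\rho)=\sum_{e\ \text{above}\ x_0}\pi^{\Z^2}_\rho(A_e),\qquad A_e:=\{p(e)\ \text{lies below}\ x_0\ \text{and the move}\ p(e)\to e\ \text{is legal}\}.
\end{equation}
By the interlacement condition recalled above, $A_e$ is a local occupation event: no particle sits on the $C_{\ell_0}$-transversal edges strictly between $x_0$ and $e$, while each neighbouring column $C_{\ell_0\pm1}$ must carry a particle strictly between $e$ and the next $C_{\ell_0}$-particle above $e$.

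Since $\pi^{\Z^2}_\rho$ is determinantal with kernel $K^{-1}$, the probability that a given finite set of edges is occupied and another empty is, by inclusion--exclusion, a finite alternating sum of minors of $K^{-1}$ weighted by Kasteleyn weights. This exhibits $v^{\Z^2}(\rho)$ as an infinite sum of determinants, grouped by jump length, and the main obstacle is to resum it. For the honeycomb the two-sided interlacement degenerates and, as in \cite{CF15}, the entire series telescopes to the single $K^{-1}$ entry behind \eqref{eq:cf}. On $\Z^2$ the legality constraint genuinely couples both neighbours $C_{\ell_0\pm1}$ through the first blocking particle on either side, so no one-entry collapse occurs. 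I would instead peel the sum off jump length by jump length, prove that all but finitely many boundary contributions cancel telescopically, and identify the survivors as a fixed linear combination of a few $K^{-1}$ entries together with one convergent geometric-type series; establishing that this cancellation is exact and that the remainder converges is the delicate point.

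Finally I would evaluate the surviving quantities. In Fourier form $K^{-1}$ is a double contour integral of $1/P(z,w)$, with $P$ the Kasteleyn characteristic polynomial whose coefficients are fixed by $\rho$, equivalently by $\psi_1,\psi_2$. One contour integration is rational in the integrand; the remaining one is governed by the quadratic $P(\cdot,w)=0$, whose roots $\mu_\pm$ obey explicit trigonometric relations in $\psi_1,\psi_2$. The geometric series of the previous step sums to a rational function of $\mu_\pm$, and the factor $\mu_+-\mu_-=\sqrt{(\mu_++\mu_-)^2-4\mu_+\mu_-}$ is precisely what yields the radical $\sqrt{1+\sin^2\psi_1/\tan^2\psi_2}$ in \eqref{vz2}; a trigonometric simplification then gives the stated closed form. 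The announced Hessian signature $(+,-)$ follows by differentiating \eqref{vz2} directly.
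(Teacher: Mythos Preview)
Your overall architecture---current as an infinite sum of local dimer probabilities, then collapse to finitely many $K^{-1}$ entries, then evaluate---matches the paper. But there are two concrete gaps.

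First, your description of the event $A_e$ is wrong. You say the legality constraint ``genuinely couples both neighbours $C_{\ell_0\pm1}$ through the first blocking particle on either side''. In fact the paper shows (Section~\ref{sec:avc}) that for $i\ge 2$ the event $U(\bar e_i)$ is simply $\{e_1,\dots,e_i\in m\}$, where the $e_j$ are the non-transversal edges along the zig-zag column $C_{\ell_0}$ itself---no conditions on neighbouring columns appear. This is what makes the sum \eqref{vz22} a sum of \emph{pure occupation} probabilities of growing edge sets, which is the input the combinatorics actually needs.

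Second, and more seriously, you do not have the mechanism that collapses the sum. ``Peel off jump by jump and telescope'' is not what happens, and I do not see how to make such an argument go through directly on the infinite-volume determinants. The paper's device is a monomer--monomer identity on a \emph{finite} graph $G$: one removes two specific vertices $\mathbf w,\mathbf b$ (not adjacent, not on a common face) and proves by a recursive edge expansion (Lemma~\ref{lem:finitegraphlemma}) that
\[
\frac{Z_G[\{\mathbf w,\mathbf b\}]}{Z_G}=\Pb_G[\tilde e_0,e_1]+\sum_{k=2}^{l}\Pb_G[e_1,\dots,e_k]+R^l_G.
\]
The left-hand side is then computed by a Kasteleyn argument with a \emph{modified orientation} (one edge sign is flipped to restore the Kasteleyn condition on the $2\times2$ faces created by the vertex removal), yielding $|K_G^{-1}(\mathbf w,\mathbf b)+\text{(explicit local correction)}|$ (Lemma~\ref{lem:monomerformula}). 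Neither the vertex-removal identity nor the orientation-flip trick is a telescoping of minors in any obvious sense; this is the missing key lemma in your plan. The passage to infinite volume is then done via Aztec-diamond asymptotics, with an explicit bound on the remainder $R^l_G$.

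Your final evaluation via the quadratic roots $\mu_\pm$ of $P(\cdot,w)$ could in principle be made to work, but the paper instead reduces the double integral to a single integral with endpoints $\bar\Omega_c,\Omega_c$ (Lemma~\ref{appendixlemma}) and reads off $v^{\Z^2}(\rho)=\tfrac1\pi\Im\Omega_c$; the radical in \eqref{vz2} comes from solving $\tan\psi_2=2\Im\Omega_c/(|\Omega_c|^2-1)$ with $|\Omega_c|\sin\psi_1=\Im\Omega_c$. No geometric series is summed.
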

It is immediate to see that the r.h.s.\ of \eqref{eq:cf} (resp.\ of
\eqref{vz2}) is positive in the whole triangle $P^{\mathcal H}$
(resp.\ in the square $P^{\mathbb Z^2}$) of Definition~\ref{def:NP}.
\begin{thm}
\label{th:varianza}
For $\mathcal G=\mathcal H$ , \eqref{eq:var1} holds for every $\rho\in P^{\mathcal G}$.
\end{thm}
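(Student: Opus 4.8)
The plan is to bound the \emph{temporal} fluctuations of $J(t)$ by \emph{spatial} fluctuations of the height function under the stationary measure $\pi^{\mathcal H}_\rho$, which are only logarithmic because the dimer Gibbs measure is log-correlated. First I would fix the graphical construction, with one i.i.d.\ exponential clock per transversal edge, so that $t\mapsto\eta_t$ (the configuration at time $t$) is a deterministic function of $\eta_0$ and the clock realization $\omega$. Via the correspondence recalled in the excerpt, $J(t)$ equals, up to a sign and an $\eta_0$-dependent constant, the variation $h_{\eta_t}(x_0)-h_{\eta_0}(x_0)$ of the height function at the origin (with the height pinned at a remote reference face), so it suffices to control the variance of this increment. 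The key structural input I would record at the outset is \textbf{attractiveness}: under the basic coupling with shared clocks, the rule ``the highest particle below $e$ moves up'' preserves the height partial order $h_{\eta}\le h_{\eta'}$ for all $t$. This monotonicity is the main tool, and it is what will let the argument run uniformly in $\rho$.

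The heart of the matter is a comparison, in $L^2(\nu^{\mathcal H}_\rho)$, between the temporal increment $h_{\eta_t}(x_0)-h_{\eta_0}(x_0)$ and a \emph{spatial} increment $h_{\eta_0}(x_0)-h_{\eta_0}(z_t)$ of the \emph{initial} configuration, where $z_t$ is a face at distance $\Theta(t)$ from $x_0$ along the characteristic direction of the hydrodynamic equation $\partial_t h=v^{\mathcal H}(\nabla h)$ associated with the slope $\rho$ (determined by $\nabla v^{\mathcal H}(\rho)$, which is explicit from \eqref{eq:cf}). Heuristically, to leading order the dynamics transports the height profile along characteristics; stationarity and translation invariance of $\pi^{\mathcal H}_\rho$ then make the right-hand side exactly computable. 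Concretely I would couple the process started from $\pi^{\mathcal H}_\rho$ with a spatial translate of itself and bound the $L^2$ norm of the difference of the two increments by the number of coupling discrepancies that cross $x_0$ up to time $t$, again using attractiveness to keep the discrepancies monotone and to control their number.

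The spatial increment is then handled by the explicit structure of the stationary measure: although space-time correlations are not determinantal, the \emph{equal-time} law $\pi^{\mathcal H}_\rho$ is, and from the asymptotics of the inverse Kasteleyn matrix on $\mathcal H$ one obtains the Gaussian-free-field bound $\mathrm{Var}_{\pi^{\mathcal H}_\rho}\bigl(h_{\eta_0}(x_0)-h_{\eta_0}(z)\bigr)\le C_\rho\log(2+|x_0-z|)$. Taking $|x_0-z_t|=\Theta(t)$ produces the desired $O(\log t)$ bound for the main term, and it remains to show that the comparison error from the previous step, together with the contribution of the clock noise, is of the same order or smaller — so that \eqref{eq:var1} holds for every $\rho\in P^{\mathcal H}$.

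The main obstacle is precisely this comparison/error step, for two reasons. First, the jumps are of \textbf{unbounded length}, so there is no deterministic finite speed of propagation and one cannot localize $J(t)$ to a bounded space-time cone by a naive light-cone argument; the technical core is therefore a \emph{probabilistic} finite-speed estimate, showing that up to an $L^2$ error that is $o(\log t)$ the current through $x_0$ depends only on the initial data and clocks within distance $O(t)$ of $x_0$ and is concentrated near the characteristic. This requires sharp tail bounds on jump lengths under $\pi^{\mathcal H}_\rho$ and on the displacement of discrepancies. Second, and more fundamentally, the correction term is small rather than growing only because of the anisotropic $(+,-)$ signature of the Hessian of $v^{\mathcal H}$: the same transport heuristic applied to an isotropic model would (wrongly) predict a logarithm, so the cancellation encoded in the AKPZ structure must be exploited quantitatively in controlling the discrepancy count — this is not a soft step. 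Making all these estimates uniform over the \emph{entire} triangle $P^{\mathcal H}$, in particular near its boundary where one dimer density degenerates and correlation lengths blow up, is exactly what removes the technical slope restriction $A\subsetneq P^{\mathcal H}$ of \cite{Ton15} and yields Theorem~\ref{th:varianza} in full generality.
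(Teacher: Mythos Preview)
Your proposal is not what the paper does, and its central step is a genuine gap rather than a mere detail. The paper does \emph{not} argue via characteristics, attractiveness, or discrepancy coupling. Instead it invokes, from \cite[Sec.~9 and App.~A]{Ton15}, a reduction (already valid for every $\rho\in P^{\mathcal H}$) that converts \eqref{eq:var1} into a purely \emph{equilibrium} estimate, Theorem~\ref{th:varV}: prove $\mathrm{Var}_{\pi_\rho}\bigl(\sum_{e\in\Lambda_L}V(e)\bigr)\le cL^2\log L$, where $V(e)$ is the jump length of $p(e)$ to reach $e$. All of Section~\ref{sectThmVar} is devoted to this static bound. The new idea is a combinatorial identity (Proposition~\ref{prop:hex:infinite}), proved by partition-function manipulations on finite hexagons and then passed to the infinite-volume limit, showing that $\pi_\rho^{\mathcal H}[\widetilde V(e_1)\widetilde V(e_2)]$ equals, up to controllable boundary terms, the $2\times 2$ determinant $(a_2a_3/a_1)^2\det[\overline K^{-1}(\circ(x_i{+}1,n_i),\bullet(x_j,n_j))]$. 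Subtracting the product of expectations \eqref{eqSpeedKernel} leaves a product of two off-diagonal $\overline K^{-1}$ entries, each decaying like $C/(1+|e_1-e_2|)$, so summing over $\Lambda_L\times\Lambda_L$ gives $O(L^2\log L)$. No dynamics, no coupling, no hydrodynamics, and no use of the Hessian signature enter this argument.

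The gap in your route is precisely the step you yourself flag: the $L^2$ comparison between the temporal increment and a spatial increment along the characteristic. In $(2{+}1)$ dimensions there is no known second-class-particle or discrepancy-counting argument that produces an $O(\log t)$ bound on this error; the one-dimensional mechanism (a single discrepancy behaving as a particle on a characteristic) has no analogue here, and your remark that ``the cancellation encoded in the AKPZ structure must be exploited quantitatively'' names the difficulty without supplying a method. In fact the restriction to $A\subsetneq P^{\mathcal H}$ in \cite{Ton15} came not from the time-to-space reduction (which holds for all $\rho$) but from an earlier, less efficient attack on the equilibrium variance of $\sum_e V(e)$; the present paper removes the restriction by discovering the Kasteleyn-determinant identity for the $V(e)$ correlations, not by improving any dynamical estimate. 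One further inaccuracy: the $(+,-)$ signature plays no role in the proof of Theorem~\ref{th:varianza}; it is checked separately in Appendix~\ref{app:Hessian} as a classification statement.
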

Moreover, the proof of \eqref{eq:var1} we give here is substantially simplified w.r.t. the one in \cite{Ton15}.
Also, our method can be easily adapted to prove Theorem~\ref{th:varianza} also for the dynamics on $\mathbb Z^2$ and every $\rho\in P^{\mathbb Z^2}$ but, in order to keep this work within a reasonable length, we do not give details on this extension.

\begin{rem}
  From the above explicit expression \eqref{vz2} it is possible to
  check (see Appendix~\ref{app:Hessian}) that the Hessian of the
  function $\rho\mapsto v^{\mathbb Z^2}(\rho)$ has signature $(+,-)$
  for every $\rho\in P^{\mathbb Z^2}$. This means that our model belongs
  to the anisotropic KPZ universality class.
\end{rem}

\begin{rem}
  The work~\cite{Ton15} studies a
  more general, partially asymmetric dynamics where upward jumps have rate $p$ and downward jumps have rate $q$. In this case, the speed of growth is given by the above formulas multiplied by $p-q$. Also, the result on the variance holds true also for the partially asymmetric version. In fact, from \cite[Sec. 9]{Ton15} one sees that Theorem \ref{th:varianza} holds for general $p,q$ as soon as
  Theorem \ref{th:varV} below, that is independent of $p,q$, is proved.
\end{rem}

\subsection{Geometric interpretation of $v^{\mathcal G}(\rho)$}
The stationary and translation invariant Gibbs measures form a
two-parameter family. From an interface perspective, it is natural to
use the average slope of the interface, $\rho=(\rho_1,\rho_2)$, as
parametrization. Then all other quantities, such as the average number
of dimers of a given type or the speed of growth, are functions of
$\rho$.  As it was already known for the honeycomb lattice, the
correlation kernel giving dimer correlations, that in principle is a double contour integral \cite{KOS03}, can be rewritten as a
single integral from $\overline \Omega_c$ to $\Omega_c$, where
$\Omega_c=\Omega_c(\rho)$ is a complex number in the upper half plane
$\mathbb{H}$. Further, for ${\cal G}={\cal H}$ and for a special
initial condition, it was shown~\cite{BF08} that the height field
fluctuations of the growth model converges to a Gaussian free field
(GFF).  More precisely, the correlations on a macroscopic scale at $m$
different points converge to the correlations of the GFF on
$\mathbb{H}$ between the points obtained by mapping the $m$ points to
$\mathbb{H}$ by $\Omega_c$. The map $\Omega_c$ was known already from
the work of Kenyon~\cite{Ken04} (there it is called $\Phi$ in Section~1.2.3
and Figure~2).  A generalization of~\cite{BF08} to a setting with two
different jump rates was made in~\cite{Dui11}.

Here we shortly present how the densities of the different types of
dimers, the correlation kernel and the speed of growth are written in
terms of $\Omega_c$. For the hexagonal lattice we refer
to~\cite{BF08}: the three types of dimers are in Figure~5.1, the
points $0$, $1$ and $\Omega_c$ form a triangle whose internal angles
are $\pi$ times the frequencies of the types of dimers (Figure~3.1),
and the correlation kernel as a single integral is given in~\cite[Prop. 3.2]{BF08}. Finally, an interesting property is that the speed of
growth (\ref{eq:cf}) equals $\frac{1}{\pi}\Im(\Omega_c)$.

For the square lattice, there also exists
$\Omega_c=\Omega_c(\rho)\in\mathbb{H}$ (not the same one as for the
hexagonal lattice) such that the correlation kernel is given as a
single integral from $\overline \Omega_c$ to $\Omega_c$ (see
Lemma~\ref{appendixlemma}). Using this and formula
\eqref{eq:localstatsG} below, one can easily compute the densities of
the different types of dominoes with the result
\begin{equation}\label{eqGeom1}
\begin{aligned}
a_1&=\textrm{density of }(\bullet(0,0),\circ(0,0))=\frac{1}{\pi} \left[\arg(\Omega_c)-\arg(\Omega_c+1)\right],\\
a_2&=\textrm{density of }(\bullet(0,0),\circ(0,1))=\frac{1}{\pi} \left[\arg(\Omega_c-1)-\arg(\Omega_c)\right],\\
a_3&=\textrm{density of }(\bullet(0,0),\circ(-1,1))=1-\frac{1}{\pi} \arg(\Omega_c-1),\\
a_4&=\textrm{density of }(\bullet(0,0),\circ(-1,0))=\frac{1}{\pi} \arg(\Omega_c+1).\\
\end{aligned}
\end{equation}
Further, the slopes are given (see (\ref{eq:Zslope1})-(\ref{eq:Zslope2}) and Lemma~\ref{lem:Z2slopes}) by
\begin{equation}\label{eqGeom2}
\begin{aligned}
\rho_1+\tfrac12 &= \frac{1}{\pi}\arg(\Omega_c) = a_1+a_4=1-a_2-a_3,\\
\rho_2+\tfrac12 &= \frac{1}{\pi}\left[\arg(\Omega_c-1)-\arg(\Omega_c+1)\right]=a_1+a_2=1-a_3-a_4.
\end{aligned}
\end{equation}
Finally, it follows from Appendix \ref{App:Positivity} that the speed of growth \eqref{vz2} is given by
\begin{equation}
v^{\Z^2}(\rho)=\frac{1}{\pi}\Im(\Omega_c),
\end{equation}
which, remarkably, is the same form as in the hexagonal case. As in the hexagonal case, also in the square case $\Omega_c(\rho)$ has  a nice geometric representation in terms of dimer densities, see Figure~\ref{fig:Geometry}.
\begin{figure}
\begin{center}
\includegraphics[height=5cm]{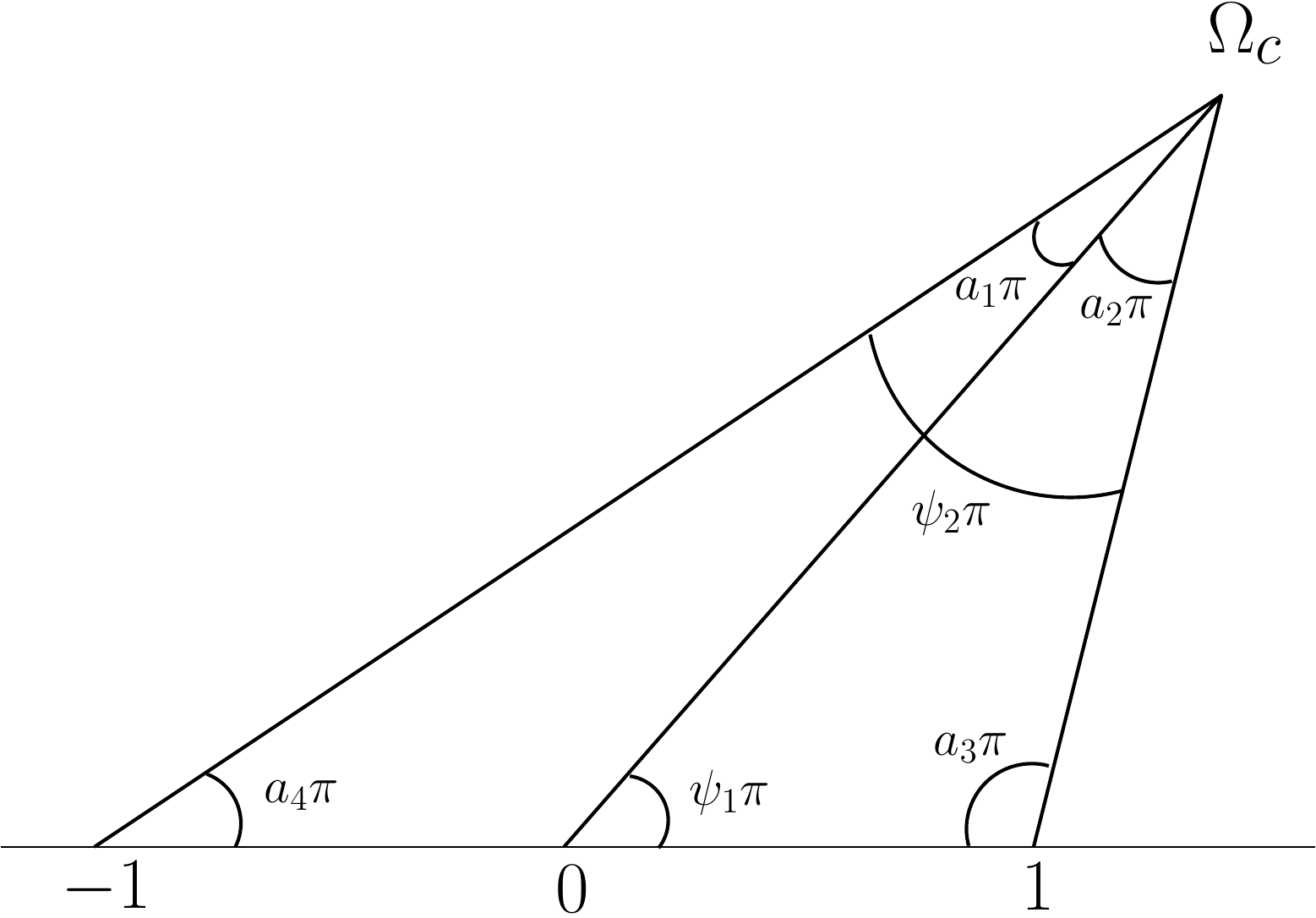}
\caption{Geometric interpretation of  $\Omega_c$ in terms of slopes and dimers densities. Here $\psi_i=\rho_i+1/2$.}
\label{fig:Geometry}
\end{center}
\end{figure}

\section{Background}\label{sectBackground}

\subsection{Gibbs Measures} \label{subsec:Gibbs}

An ergodic Gibbs measure or simply a Gibbs measure $\pi$, in our
context, is a probability measure on $\mathcal M_{\mathcal G}$ that is
invariant and ergodic w.r.t. translations in $\mathcal G$ and
satisfies the following form of DLR (Dobrushin-Lanford-Ruelle)
equations: for any finite subset of edges $\Lambda$, the law
$\pi(\cdot|m_{\Lambda^c})$ conditioned on the dimer configuration on edges
not in $\Lambda$ is the uniform measure on the finitely many dimer
configurations on $\Lambda$ that are compatible with $m_{\Lambda^c}$.
By translation invariance, to a Gibbs measure one can associate an average slope $\rho=(\rho_1,\rho_2)$, such that
\begin{equation}
\pi(h(x+\hat e_i)-h(x))=\rho_i,\quad i=1,2,
\end{equation}
with $\hat e_i$ the coordinate unit vectors.

It is convenient, both for this section and the rest of the work, to
make an explicit choice of coordinates on $\mathcal G$. Let us start
with the graph $\mathcal H$. Both white and black vertices are
assigned coordinates $x=(x_1,x_2)\in\mathbb Z^2$. The two (white and
black) endpoints of the same north-west oriented edge will be assigned
the same coordinates (we will denote them
$\circ(x_1,x_2),\bullet(x_1,x_2)$) and we make an arbitrary choice of
which edge has endpoints of coordinates $(0,0)$. The coordinate vectors
$\hat e_1,\hat e_2$ are chosen to be the unit vectors forming an angle
$\pi/6$ and $\pi/2$, respectively, w.r.t. the horizontal axis. See
Figure~\ref{fig:hex}. Note that the nearest neighbors of the black
vertex $\bullet(0,0)$ are the white vertices $\circ(0,0),\circ(0,1)$ and $\circ(-1,1)$.

As for $\mathbb Z^2$, we let $\hat e_1, \hat e_2$ be the vectors
forming an angle $\pi/4$ and $3\pi/4$ w.r.t. the horizontal axis,
see Figure~\ref{fig:threads}. Again we fix arbitrarily the origin of the lattice and
we establish that a white vertex has the same coordinates $(x_1,x_2)$
as the black vertex just to its left. The nearest neighbors of the black vertex $\bullet(0,0)$ are the white vertices
$\circ(0,0),\circ(0,1),\circ(-1,1),\circ(-1,0)$.

Recalling the definition of height function it is easy to see that,
for any Gibbs measure $\pi$, the slope $\rho$ must belong to the
closure of the polygon $ P^{\mathcal G}$ of Definition~\ref{def:NP}.

It is known \cite{KOS03} that for every $\rho \in P^{\mathcal G}$ there
exists a unique Gibbs measure $\pi:=\pi^{\mathcal G}_\rho$ with slope
$\rho$. This can be obtained as the limit (as $L\to\infty$) of the
uniform measure on the subset of dimer coverings of the $L\times L$
periodization of the lattice $\mathcal G$ such that the height
function changes by $\lfloor L \rho_i\rfloor$ along a cycle in
direction $\hat e_i,i=1,2$.

The correlations of the measure $\pi^{\mathcal G}_\rho$ have a
determinantal representation \cite{KOS03}, that we briefly recall here. First of all, one needs to
introduce the Kasteleyn matrix: this is the infinite, translation
invariant, matrix
$\left[\bar K(b,w)\right]_{b\in B_{\mathcal G},w\in W_{\mathcal G}}$, with
rows/columns indexed by black/white vertices of $\mathcal G$. Matrix
elements are non-zero complex numbers for $b,w$ nearest neighbors and
are zero otherwise. The non-zero elements depend also on the slope
$\rho$. See below for the explicit expression of $\bar K$ for the graphs
$\mathcal H$ and $\mathbb Z^2$. Next, one introduces an infinite,
translation-invariant matrix
$\left[\bar K^{-1}(w,b)\right]_{w\in W_{\mathcal G},b\in B_{\mathcal G}}$ (as
the notation suggests,  $\bar K \bar K^{-1}$
equals the identity matrix). Again, see below for the expression of
$\bar K^{-1}$ for $\mathcal G=\mathcal H$ and $\mathcal G=\mathbb Z^2$. All multi-point
correlations of $\pi^{\mathcal G}_\rho$ can be expressed via $\bar K $
and $\bar K^{-1}$ as follows \cite{KOS03}: given edges $e_i=(w_i,b_i), i\le k$,
\begin{equation}\label{eq:localstatsG}
\pi^{\mathcal G}_\rho(e_1,\dots,e_k\in m) = \bigg(\prod_{i=1}^k \bar K(b_i,w_i)\bigg) \det[\bar K^{-1}(w_i,b_j)]_{1 \leq i,j \leq k}.
\end{equation}

The definition of matrices $\bar K,\bar K^{-1}$ is not unique and different choices than the one we make below can be found in the literature.

For $\mathcal G=\mathcal H$, we let
\begin{equation}
 \label{eq:KH}
	\overline{K}(b,w)=\left\{
 \begin{array}{ll}
 a_2 & \text{ if } b=\bullet(x_1,x_2),w=\circ(x_1,x_2),\\
a_1 &\text{ if } b=\bullet(x_1,x_2),w=\circ(x_1-1,x_2+1),\\
a_3 &\text{ if }b=\bullet(x_1,x_2),w=\circ(x_1,x_2+1),
 \end{array}
 \right.
\end{equation}
where $a_i=a_i(\rho)>0$ are such that in the triangle with sides $a_1,a_2,a_3$, the angle opposite to the side of length $a_i$ is $\pi r_i>0$,
with $r_1=1-\rho_2$, $r_2=\rho_1$ and $r_3=\rho_2-\rho_1$. Note that $r_1$ (resp.\ $r_2,r_3$) is the density of dimers oriented horizontally (resp.\ oriented north-west, north-east). The inverse Kasteleyn matrix $\overline{K}^{-1}$ is
\begin{equation} \label{hexagonK-1}
\overline{K}^{-1}(w,b)=\frac{1}{(2\pi \mathrm{i})^2} \int dz_1 dz_2 \frac{ z_1^{y_2-x_2} z_2^{x_2-y_2+x_1-y_1-1}}{a_1+a_2z_1+a_3z_2},
\end{equation}
with $w=\circ(x_1,x_2)$ and $b=\bullet(y_1,y_2)$ and the integral runs over the anticlockwise circles $|z_1|=|z_2|=1$ in the complex plane.

For $\mathcal G=\mathbb Z^2$ we take instead
\begin{equation}
 \label{eq:KZ}
	\overline{K}(b,w)=\left\{
 \begin{array}{ll}
 \mathrm{i} e^{B_1} & \text{ if }b=\bullet(x_1,x_2),w=\circ(x_1,x_2),\\
 e^{B_1+B_2} &\text{ if } b=\bullet(x_1,x_2),w=\circ(x_1,x_2+1),\\
 \mathrm{i} e^{B_2} &\text{ if }b=\bullet(x_1,x_2),w=\circ(x_1-1,x_2+1),\\
 1 &\text{ if }b=\bullet(x_1,x_2),w=\circ(x_1-1,x_2),
 \end{array}
 \right.
\end{equation} (the ``magnetic fields'' $B_1,B_2$ are fixed by the slope $\rho$ as specified below)
and the inverse Kasteleyn matrix $\overline K^{-1}$ is given by
\begin{equation} \label{eq:wholeplaneK}
	\overline{K}^{-1}(w,b)=\frac1{(2\pi \mathrm{i})^2}\int \frac{d z_1}{z_1}\frac{d z_2}{z_2}
\frac{z_1^{y_1-x_1}z_2^{y_2-x_2}}{\mu(z_1,z_2)},\quad w=\circ(x_1,x_2),b=\bullet(y_1,y_2)
\end{equation}
where the integral runs over $|z_1|=|z_2|=1$ and
\begin{equation}
 \label{mu2}
\mu(z_1,z_2)= z_1( 1+ e^{B_1} \mathrm{i}z_1^{-1} +e^{B_2}\mathrm{i}z_2^{-1}+e^{B_1+B_2}z_1^{-1} z_2^{-1}),
\end{equation}
The parameters $B=(B_1,B_2)$ are related to the slope $\rho=(\rho_1,\rho_2)$ as follows:
\begin{equation} \label{eq:Zslope1}
\rho_1=\rho_1(B)= \frac{1}{2} -\left(\mathrm{i}e^{B_2} \overline{K}^{-1}(\circ(-1,1),\bullet (0,0))+e^{B_1+B_2}\overline{K}^{-1}(\circ(0,1),\bullet (0,0))\right)
\end{equation}
and
\begin{equation}
\label{eq:Zslope2}
\rho_2=\rho_2(B)= -\frac{1}{2} +\left(\mathrm{i}e^{B_1} \overline{K}^{-1}(\circ(0,0),\bullet (0,0))+e^{B_1+B_2}\overline{K}^{-1}(\circ(0,1),\bullet (0,0))\right).
\end{equation}
This is simply because, by the definition of height function, one has for instance
\begin{equation}
\rho_1=1/2-\pi^{\mathbb Z^2}_\rho((\circ(-1,1),\bullet (0,0))\in m)-
\pi^{\mathbb Z^2}_\rho((\circ(0,1),\bullet (0,0))\in m)
\end{equation}
and then \eqref{eq:Zslope1} follows from \eqref{eq:localstatsG}.
It is known \cite{KOS03} that the relations \eqref{eq:Zslope1},
\eqref{eq:Zslope2} given a bijection between $ P^{\mathbb Z^2}$ and the ``amoeba''
\begin{equation}
 \label{eq:ameba}
 \mathcal B= \{B:|\sinh(B_1)\sinh(B_2)|< 1\}=\{B:|\tanh(B_2)\cosh(B_1)|\le 1\}.
\end{equation}
Injectivity of the map $B\mapsto \rho(B)$ is related to the  fact that  $(\rho_1,\rho_2)$ is the gradient w.r.t. $(B_1,B_2)$ of a surface tension function that is a convex function of $(B_1,B_2)$.

\subsection{Average and variance of the current}
For ease of notation, given distinct edges $e_1,\dots e_{n+k}$ of $\mathcal G$, we let
\begin{equation}
\pi^{\mathcal G}_\rho(e_1,\dots,e_n,e_{n+1}^c,\dots,e_{n+k}^c):=\pi^{\mathcal G}_\rho(e_i\in m\;\forall i\le n,\; e_{n+i}\not\in m \;\forall i\le k)
\end{equation}
where we recall that $m$ denotes the dimer covering.

\subsubsection{Average current}
\label{sec:avc}
Let $\ell_0$ denote the index such that the face $x_0$ of $\mathcal G$
that we established to be the origin is in column $C_{\ell_0}$ and
let $S$ denote the set of edges $e$, transversal to $C_{\ell_0}$, that are
above $x_0$. Then, from the definition of the dynamics, we obtain the
following expression for the speed $v^{\mathcal G}(\rho)$:
\begin{equation}
 \label{vz1}
v^{\mathcal G}(\rho)=\sum_{e\in S} \pi^{\mathcal G}_\rho(U(e)):=\sum_{e\in S} \pi^{\mathcal G}_\rho (p(e) \text { is below $x_0$ and  can reach edge } e)
\end{equation}
simply because, if $p(e)$ can reach $e$, it will do so with rate $1$
and with such an update, it will increase the integrated current $J(t)$
through $x_0$ by $1$.

Then, \eqref{vz1} can be expressed more explicitly. In view of Theorem~\ref{thm:square}, we consider the case $\mathcal
G=\mathbb Z^2$. With reference to Figure~\ref{fig:coords}, where for
convenience we rotated the graph by $\pi/4$ clockwise, we first notice
that $S$ consists of the set of transversal edges $\{\bar e_i\}_{i\ge 1}$.
\begin{figure}
\begin{center}
\includegraphics[height=8cm]{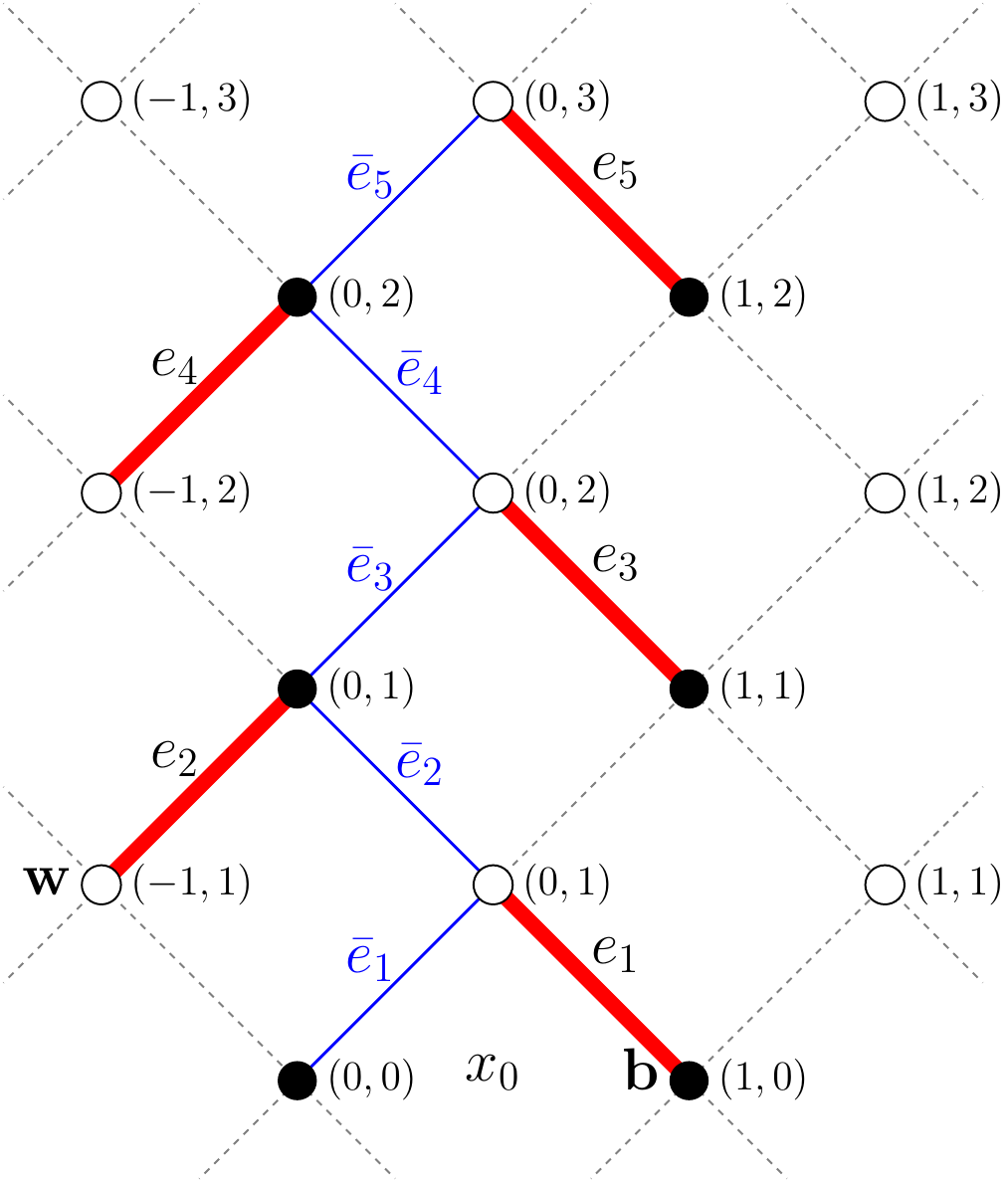}\hspace{5mm}
\includegraphics[height=8cm]{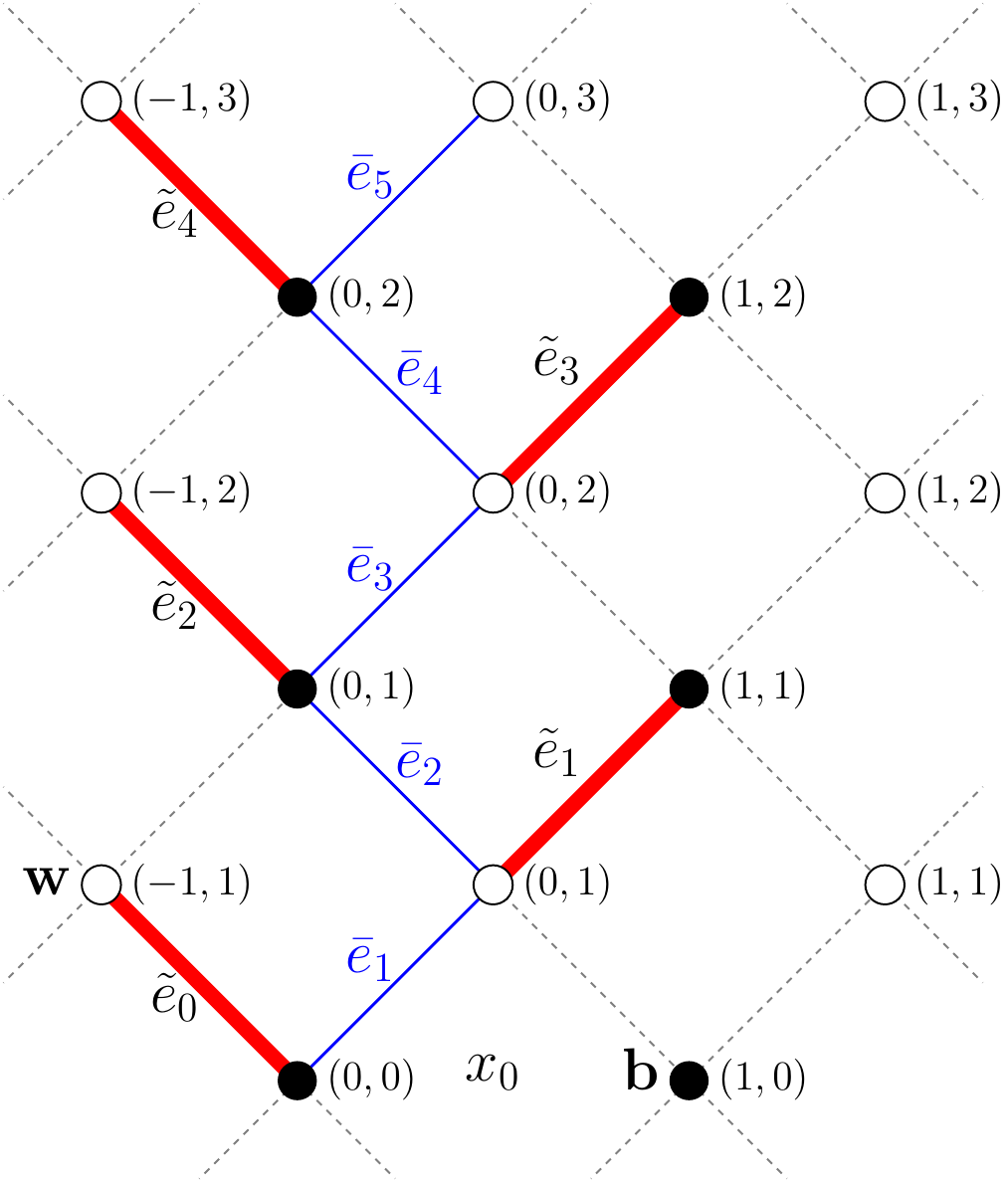}
\caption{The left figure shows the edges $e_1,e_2,\dots,$ etc., which are shown with solid red lines. The right figure shows the edges $\tilde{e}_0,\tilde{e}_1,\dots,$ etc., which are showed with solid red lines.
}
\label{fig:coords}
\end{center}
\end{figure}
Also, it is easily checked that the event $U(\bar e_1)$ is equivalent to the event that edge $e_1$ is occupied by
a dimer while $\tilde e_0$ is not. Finally, the event $U(\bar e_i)$, $i\ge 2$, is equivalent to the event that
edges $e_1,\dots,e_i$ are all occupied by dimers.

As a consequence,
\begin{equation}
 \label{vz22}
v^{\mathbb Z^2}(\rho)=\pi^{\mathbb Z^2}_\rho( e_1,\tilde e_0^c)+\sum_{n\ge 2}\pi^{\mathbb Z^2}_\rho(e_1,\dots,e_n).
\end{equation}
In Section~\ref{sec:vz2} we will show that the r.h.s.\ of \eqref{vz22} equals the r.h.s.\ of \eqref{vz2}.
The sum is convergent: in fact, label $x_i, i\ge0$ the faces in the
column $C_{\ell_0}$, where $x_i$ is adjacent to and above $x_{i-1}$. Then, the event
$\{e_1,\dots,e_n\in m\}$ is equivalent to $h(x_n)-h(x_0)=-n/2$. On the
other hand, $\pi_\rho^{\mathbb Z^2}(h(x_n)-h(x_0))=n\rho_2$
so that
\begin{equation}
\pi^{\mathbb Z^2}_\rho(e_1,\dots,e_n)=\pi^{\mathbb Z^2}_\rho\left[
h(x_n)-h(x_0)-\pi^{\mathbb Z^2}_\rho(h(x_n)-h(x_0))=-n(\rho_2+1/2)
\right].
\end{equation}
Observe that
$\rho_2>-1/2$ since $\rho\in P^{\mathbb Z^2}$. Finally, since the $k^{th}$ centered
moment of $h(x_n)-h(x_0)$ is $\Or((\log n)^{k/2})$ \cite[App. A]{laslier2015quickly} the summability of
\eqref{vz22} follows.

\subsubsection{Variance of the current}

\label{sec:vacu}
Let us move to the variance of $J(t)$ for $\mathcal G=\mathcal H$ (we do not work out formulas for $\mathcal G=\mathbb Z^2$). Recall that, given a dimer
configuration $m\in\mathcal M_{\mathcal H}$ and a horizontal edge $e$,
we denote $p(e)$ the highest particle below $e$ in the same
column. We denote $V(e)$ the number hexagonal faces that $p(e)$ has to
cross in order to reach edge $e$ and we set $V(e)=0$ if $p(e)$
cannot be moved to $e$ (i.e., if the move violates interlacements).

Going back to \cite[Sec. 9 and Appendix A]{Ton15}, one sees that to
prove Theorem~\ref{th:varianza} it is sufficient to show:
\begin{thm}
 \label{th:varV}
   Denote by $\Lambda_L$ the set of horizontal edges $e=(\bullet(x+1,n),\circ(x,n+1))$ with $0\le x\le L,0\le n\le L$. Then, for every $\rho\in P^{\mathcal H}$ there exists a constant $c $ such that
 \begin{equation}
 \label{eq:varV}
 {\rm Var}_{\pi_\rho}\bigg(\sum_{e\in\Lambda_L}V(e)\bigg)\le c L^2\log L.
 \end{equation}
\end{thm}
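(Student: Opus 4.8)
The plan is to expand the variance as a double sum of covariances over the $O(L^2)$ edges of $\Lambda_L$ and to show that these covariances decay quadratically in the distance between the edges; the logarithmic factor in \eqref{eq:varV} then appears precisely as the borderline divergence of $\sum 1/\mathrm{dist}^2$ in two dimensions. Writing
\[
\mathrm{Var}_{\pi_\rho}\Big(\sum_{e\in\Lambda_L}V(e)\Big)=\sum_{e,e'\in\Lambda_L}\mathrm{Cov}_{\pi_\rho}(V(e),V(e')),
\]
the whole statement reduces to the uniform estimate $|\mathrm{Cov}_{\pi_\rho}(V(e),V(e'))|\le C(1+\mathrm{dist}(e,e')^2)^{-1}$, with $C=C(\rho)$. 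Indeed, for fixed $e$ there are $O(d)$ edges $e'\in\Lambda_L$ at distance $\approx d$, so $\sum_{e'}(1+\mathrm{dist}(e,e')^2)^{-1}\le C\log L$, and multiplying by $|\Lambda_L|=O(L^2)$ gives exactly $cL^2\log L$.

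Two ingredients feed this covariance bound. First, $V(e)$ has uniform exponential tails: $\pi_\rho(V(e)\ge k)\le Ce^{-ck}$ with $C,c$ depending only on $\rho\in P^{\mathcal H}$. On the event $\{V(e)\ge k\}$ the highest particle below $e$ in its column $C_\ell$ sits at distance at least $k$, so the $k$ faces of $C_\ell$ immediately below $e$ carry no particle; since $\rho$ lies in the interior of $P^{\mathcal H}$ the particle density is strictly positive, and the determinantal formula \eqref{eq:localstatsG} with the explicit kernel \eqref{hexagonK-1} forces such gap (hole) probabilities to decay exponentially. In particular $\sup_e\pi_\rho(V(e)^2)<\infty$, so the diagonal terms are harmless. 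Second, the event $\{V(e)\ge k\}$ is measurable with respect to the dimers in a box $B_e(k)$ of height $O(k)$ and bounded width around $e$, since it only constrains the particle positions in $C_\ell$ and in the two neighbouring columns over a vertical range $O(k)$.

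Combining these, I would write $V(e)=\sum_{k\ge1}\Id_{\{V(e)\ge k\}}$, so that
\[
\mathrm{Cov}_{\pi_\rho}(V(e),V(e'))=\sum_{k,k'\ge1}\mathrm{Cov}_{\pi_\rho}\big(\Id_{\{V(e)\ge k\}},\Id_{\{V(e')\ge k'\}}\big),
\]
and bound each summand two ways. Trivially it is at most $\min\{\pi_\rho(V(e)\ge k),\pi_\rho(V(e')\ge k')\}\le Ce^{-c\max(k,k')}$. When the boxes $B_e(k),B_{e'}(k')$ are disjoint and at distance $\delta$, one instead gets a factor $\lesssim kk'/\delta^2$ from the determinantal structure: as for the elementary two-dimer covariance $-\overline K(b_1,w_1)\overline K(b_2,w_2)\overline K^{-1}(w_1,b_2)\overline K^{-1}(w_2,b_1)$, the dependence between events in the two boxes is controlled by the cross-block of $\overline K^{-1}$, whose squared Hilbert–Schmidt norm is $O(kk'/\delta^2)$ because $|\overline K^{-1}|\lesssim 1/\mathrm{dist}$ by \eqref{hexagonK-1} and there are $O(kk')$ cross pairs. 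Splitting the $(k,k')$ sum at $k+k'=\mathrm{dist}(e,e')/2$, the part with $k+k'\ge \mathrm{dist}/2$ is super-polynomially small by the exponential tails, while for $k+k'<\mathrm{dist}/2$ the boxes lie at distance $\delta\ge\mathrm{dist}/2$ and
\[
\sum_{k,k'}e^{-c(k+k')}\frac{kk'}{\mathrm{dist}^2}\le \frac{C}{\mathrm{dist}^2}\Big(\sum_{k\ge1}ke^{-ck}\Big)^2=\frac{C'}{\mathrm{dist}^2}.
\]

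The point I want to stress is that this last step is where the difficulty lies, and it is exactly the unboundedness of $V(e)$ that must be tamed. A crude bound summing $1/\mathrm{dist}^2$ over all face pairs in boxes of size $\sim\log\mathrm{dist}$ would produce spurious powers of $\log\mathrm{dist}$ and yield only $L^2(\log L)^{O(1)}$; the sharp exponent $L^2\log L$ forces one to use the exponential tail of $V(e)$ to render its \emph{effective} support $O(1)$, absorbing the $k,k'$ factors into convergent sums and leaving the clean quadratic decay inherited from the $1/\mathrm{dist}$ decay of $\overline K^{-1}$. The technical heart is therefore the covariance estimate $\lesssim kk'/\delta^2$ for the truncated local events $\{V(e)\ge k\}$, proved from the determinantal representation \eqref{eq:localstatsG}; the remaining summations are routine and give \eqref{eq:varV}.
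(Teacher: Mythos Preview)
Your overall architecture is right: expand the variance as a double sum of covariances, show $|\mathrm{Cov}_{\pi_\rho}(V(e),V(e'))|\le C/(1+\mathrm{dist}(e,e')^2)$, and sum to get $L^2\log L$. The exponential tail $\pi_\rho(V(e)\ge k)\le Ce^{-ck}$ is also correct and is used in the paper. Where your argument becomes problematic is the covariance bound for the truncated events.

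You claim that for disjoint boxes at distance $\delta$ the covariance of $\Id_{\{V(e)\ge k\}}$ and $\Id_{\{V(e')\ge k'\}}$ is $\lesssim kk'/\delta^2$, by analogy with the two--edge case and by invoking the Hilbert--Schmidt norm of the cross block. But these events are cylinder events on $2k$ and $2k'$ edges, and the difference $\det M-\det A\det B$ for a block matrix $M=\bigl(\begin{smallmatrix}A&C\\D&B\end{smallmatrix}\bigr)$ is \emph{not} in general controlled by $\|C\|_{HS}\|D\|_{HS}$ alone: the Leibniz expansion produces combinatorially many cross terms, and the Schur--complement route $\det A[\det(B-DA^{-1}C)-\det B]$ forces you to control $A^{-1}$, the inverse of a nearly singular matrix (indeed $\det A=\pi_\rho(O_{k,e})\le e^{-ck}$). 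More seriously, in your final summation you actually use the \emph{product} bound $e^{-c(k+k')}\cdot kk'/\delta^2$, not the minimum of the two separate bounds you established. Using only the minimum $\min(e^{-c\max(k,k')},kk'/\delta^2)$ and optimizing the splitting gives at best $(\log\mathrm{dist})^4/\mathrm{dist}^2$ for the covariance, hence $L^2(\log L)^5$ for the variance---exactly the ``spurious powers of $\log$'' you warn against. So either the product bound must be proved (which is essentially the statement that the \emph{relative} change $\pi(O_{k'}\mid O_k)/\pi(O_{k'})-1$ is $O(kk'/\delta^2)$, a nontrivial Palm--kernel estimate), or a different mechanism is needed.

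The paper sidesteps this difficulty entirely by a combinatorial identity rather than termwise bounds. Writing $\widehat V(e)=\sum_{m\ge1}\Id_{O_{m,e}}$ (and a translate $\widetilde V$), the key step (Proposition~\ref{prop:hex:infinite}) shows that the \emph{whole} double sum $\sum_{m_1,m_2\ge1}\pi_\rho[\tilde O_{m_1,e_1}\tilde O_{m_2,e_2}]$ collapses, via a finite--graph argument with removed vertices and Kasteleyn orientations, to a single $2\times2$ determinant $\det[\overline K^{-1}(\circ(x_i+1,n_i),\bullet(x_j,n_j))]$. Combined with $\pi_\rho[\widetilde V(e)]=-(a_2a_3/a_1)\overline K^{-1}(\circ(x+1,n),\bullet(x,n))$, this gives the \emph{exact} formula
\[
\mathrm{Cov}(\widetilde V(e_1),\widetilde V(e_2))=-\Big(\tfrac{a_2a_3}{a_1}\Big)^2\,\overline K^{-1}(\circ(x_1{+}1,n_1),\bullet(x_2,n_2))\,\overline K^{-1}(\circ(x_2{+}1,n_2),\bullet(x_1,n_1))
\]
(for $|x_1-x_2|>1$; same--column pairs give an explicit extra term controlled by the exponential tail). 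The pointwise bound $|\overline K^{-1}|\le C/(1+\mathrm{dist})$ then yields $1/\mathrm{dist}^2$ immediately, with no $k,k'$ summation at all. In short: the paper resums first and bounds once; you bound first and then cannot resum cleanly.
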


\subsection{Dimer coverings of bipartite graphs}

In the following, we will need more general bipartite graphs $G=(V,E)$ than just $\mathbb Z^2$ and $\mathcal H$.
To each of the edges $e\in E$, we assign a positive number called an
\emph{edge weight}. We denote the weight of the edge $e$ by
$\omega(e)$ with $\omega:E\to \mathbb{R}_{>0}$. We denote the set of
dimer coverings by $\mathcal{M}_G$ and, if the graph is finite, we denote the partition function by $Z_G$.
That is,
\begin{equation}
Z_G=\sum_{m \in \mathcal{M}_G} \prod_{e \in m} \omega(e).
\end{equation}
We define $\Pb_G$ to be the dimer model probability measure on the graph
$G$, that is for $m \in \mathcal{M}_G$, $\Pb_G(m)=\prod_{e \in
 m}\omega(e)/Z_G$.
\begin{defin}
\label{def:ZEV}
 Given a subset of edges $E_1\subset E$ and a subset of vertices $V_1\subset V$, we write $G\backslash
\{E_1,V_1\}$ to be the graph $G$ with all the edges in $E_1$ and
	vertices in $V_1$ removed from $G$, along with the edges incident to either $V_1$ or $E_1$. Let $Z_G [E_1,V_1]$ denote the partition
function of this graph (if either $E_1$ or $V_1$ is empty we omit it from the notation).
\end{defin}

 We use $ K_G$ to denote the \emph{Kasteleyn matrix} of $G$ which has columns indexed by the black vertices and rows indexed by white vertices with entries given by
\begin{equation}
	(K_G)_{{b}{w}} = \left\{ \begin{array}{ll}
	\mathrm{sign}(e	)\omega(e)	& \mbox{if $e=({b},{w})$ is an edge}, \\
	0 & \mbox{if ${w}$ and ${b}$ are not connected by an edge},
\end{array} \right.
\end{equation}
where $\mathrm{sign}(e)$ is a modulus-one complex number chosen so as
to satisfy the following property. Given a face $f$ of the graph, let
$e_1,\dots,e_{2n}$ be the edges incident to it, ordered say clockwise
with an arbitrary choice for $e_1$. Then, we impose that
\begin{equation}
\label{Phif}
\Phi(f):=\frac{\sgn(e_1)\sgn(e_3)\dots \sgn(e_{2n-1})}{\sgn(e_2)\sgn(e_4)\dots \sgn(e_{2n})}=(-1)^{n+1}.
\end{equation}
This is called a \emph{Kasteleyn orientation}. Existence of a
Kasteleyn orientation for every (bipartite) planar graph is known
\cite{Kas61} and in general many choices are possible. When $G$ is a
bipartite sub-graph of the infinite lattice $\mathcal G=\mathbb Z^2$ or
$\mathcal H$ and $e=(b,w)$, the restriction of $\bar K$ to $G$
\emph{does not} in general provide a correct Kasteleyn orientation for
$G$ and this will be an important point later.

Kasteleyn~\cite{Kas61,Kas63} and independently Temperley and
Fisher~\cite{TF61} noticed that $Z_G=|\det K_G|$ for domino tilings
 (to be more precise, their formulations involved the more
complicated non-bipartite graphs but the above formulation is
sufficient for this paper). This identity is true irrespective of the choice of Kasteleyn orientation and holds for any bipartite finite planar graph.
An observation due to Kenyon~\cite{Ken97}
shows that statistical properties can be found using the inverse of
the Kasteleyn matrix, that is, for
$e_1=(b_1,w_1),\dots, e_m=(b_m,w_m)$ edges in the graph $G$,
\begin{equation}\label{eq:localstats}
\Pb_G(e_1,\dots,e_m) = \bigg(\prod_{i=1}^m K(b_i,w_i)\bigg) \det[K^{-1}(w_i,b_j)]_{1 \leq i,j \leq m}.
\end{equation}
Actually, the analogous formula \eqref{eq:localstatsG} for the
infinite graph is obtained from \eqref{eq:localstats} by suitably
letting $G$ tend to the infinite graph $\mathcal G$ by toroidal
exhaustion \cite{KOS03}.

 \begin{rem}
\label{rem:gauge}
Given an edge weight function $\omega:e\in E\mapsto \omega(e)>0$,
 define face weights as the alternating product of the
 edge weights: given a face $f$ of $G$ adjacent to edges
 $e_1,\dots,e_{2n}$ (say in clockwise order with a given choice of $e_1$), let
\begin{equation}
\omega(f)=[\omega(e_1)\omega(e_3)\dots \omega(e_{2n-1})]/[\omega(e_2)\omega(e_4)\dots \omega(e_{2n})].
\end{equation}
 The dimer model probability measure is uniquely parametrized by its
 face weights, which means that two edge weight functions lead to the
 same probability measure if the corresponding face weights are
 equal. Suppose that $\omega_1$ and $\omega_2$ are two such edge
 weight functions. Then there exist functions $F_\circ$ and
 $F_\bullet$ on white and black vertices respectively such that
 $\omega_1 (e)/\omega_2 (e)=F_\circ(w)F_\bullet(b)$ for each edge
 $e=(w,b)$. We say that $\omega_1$ and $\omega_2$ are gauge equivalent
 and the act of multiplying edge weights by functions defined on its
 incident vertices is called a gauge transformation. The
 Kasteleyn matrix for a gauge equivalent weighting is obtained by
 pre-and post-composing with diagonal matrices built from the gauge
 transformation functions.

 \end{rem}

\section{Speed of growth on $\mathbb{Z}^2$}\label{sec:vz2}
In this section, we prove~\eqref{vz2}.
We begin by remarking that the edges $e_{2m+1},m\ge0 $ that appear in
formula \eqref{vz22} are the edges $(\bullet(1,m),\circ(0,m+1))$ while
$e_{2m},m\ge1$ are the edges $(\bullet(0,m),\circ(-1,m))$; see the
left picture in Figure~\ref{fig:coords}. We also remark that
$\tilde{e}_{2m+1}$ and $\tilde{e}_{2m}$ are the edges
$(\bullet(1,m+1),\circ(0,m+1))$ and $(\bullet(0,m),\circ(-1,m+1))$ for
$m\geq0$ respectively; see the right picture in
Figure~\ref{fig:coords}. Set $\Sigma_m=\{e_1,\dots, e_m\}$ and
$\tilde{\Sigma}_m=\{\tilde{e}_1,\dots, \tilde{e}_m\}$ with the
convention that $\tilde{\Sigma}_0=\emptyset$.

\subsection{Finite Graph}\label{sec:w1}

Consider a finite bipartite graph $G$ contained in $\mathbb{Z}^2$ and
set all edge weights  to $1$. Throughout this
section, we denote $\w$ to be the vertex $\circ(-1,1)$ and $\b$ to be
the vertex $\bullet(1,0)$.
See Figure~\ref{fig:sqcoords}.
\begin{figure}
\begin{center}
\includegraphics[height=6cm]{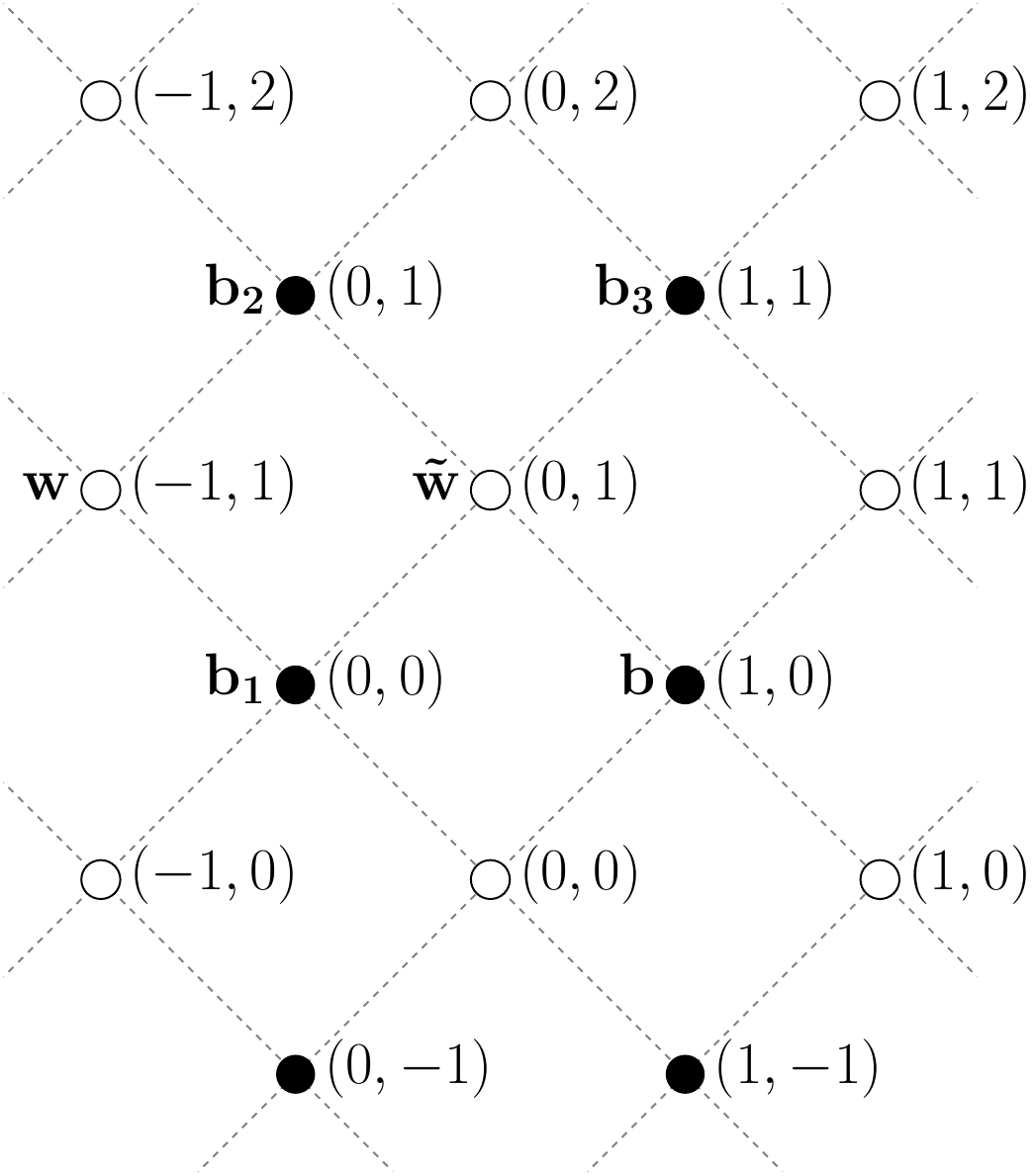}
\caption{The coordinate system used for square grid with the vertices ${\w}$, $\tilde{\w}$, $\b$, $\b_1$, $\b_2$ $\b_3$ and $\b_4$.}
\label{fig:sqcoords}
\end{center}
\end{figure}
With the notation of Definition~\ref{def:ZEV} we have
\begin{lem} \label{lem:finitegraphlemma}
For $l\geq 2$,
\begin{equation}
	\frac{Z_G[\{\w,\b\}]}{Z_G}=\Pb_G [\tilde{e}_0,e_1] +\sum_{k=2 }^{l} \Pb_G[ \Sigma_k] + R^{l}_G,
\end{equation}
where
\begin{equation}\label{RlG}
 R^l_G =\frac{ Z_G[ \tilde{\Sigma}_{l-1} ,\{\w,\b\}]}{Z_G}.
\end{equation}
and we assume that $G$ is large enough to include $\Sigma_l$ and $\tilde\Sigma_l$.
\end{lem}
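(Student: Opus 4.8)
The plan is to clear denominators and prove the equivalent identity among (non-negative, integer) weighted counts of near-perfect matchings,
\[
Z_G[\{\w,\b\}]=Z_G[\{\tilde e_0,e_1\}]+\sum_{k=2}^{l}Z_G[\Sigma_k]+Z_G[\tilde\Sigma_{l-1},\{\w,\b\}],
\]
using that, since all edge weights equal $1$, each $Z_G[\,\cdot\,]$ counts dimer coverings in which the listed edges are forced present (by Definition~\ref{def:ZEV}, by deleting the edges incident to $E_1$) and the listed vertices are deleted; in particular $Z_G[\{\w,\b\}]$ counts near-perfect matchings with a single white hole at $\w=\circ(-1,1)$ and a single black hole at $\b=\bullet(1,0)$. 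I would prove this by induction on $l$, so that it suffices to establish the base case ($l=2$) together with a one-step ``peeling'' identity
\[
Z_G[\tilde\Sigma_{k-1},\{\w,\b\}]=Z_G[\Sigma_{k+1}]+Z_G[\tilde\Sigma_{k},\{\w,\b\}],\qquad k\ge2,
\]
which then telescopes down to the remainder $R^l_G$.

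Both identities come from classifying the configurations by the dimer incident to a single distinguished ``frontier'' vertex $v_k$, namely the vertex shared by $e_k$ and $\tilde e_k$ (white if $k$ is odd, black if $k$ is even; e.g.\ $v_1=\circ(0,1)$, $v_2=\bullet(0,1)$). For the base case I condition on the partner of $v_1=\circ(0,1)$, whose four neighbours are $\b$ (a hole) and $\bullet(0,0),\bullet(0,1),\bullet(1,1)$, giving three cases. The choice $\bullet(1,1)$ is exactly ``$\tilde e_1\in m$'', i.e.\ the remainder $Z_G[\tilde\Sigma_1,\{\w,\b\}]$. In each of the other two cases there is a length-three alternating path joining the two holes, namely $\w\,\overset{\tilde e_0}{-}\,\bullet(0,0)\,-\,\circ(0,1)\,\overset{e_1}{-}\,\b$ and $\w\,\overset{e_2}{-}\,\bullet(0,1)\,-\,\circ(0,1)\,\overset{e_1}{-}\,\b$, whose unique matching edge is the dimer we conditioned on; toggling it fills both holes and produces a perfect matching containing $\{\tilde e_0,e_1\}$, resp.\ $\{e_1,e_2\}=\Sigma_2$. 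These toggles are manifestly invertible, giving exactly $Z_G[\{\tilde e_0,e_1\}]$ and $Z_G[\Sigma_2]$.

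For the inductive step the forced edges $\tilde\Sigma_{k-1}$ push the black hole up the staircase $e_1,\tilde e_1,e_3,\dots$ and the white hole up $e_2,\tilde e_2,\dots$ (each forced $\tilde e_i$ can be rotated into the corresponding $e_i$), so that exactly one neighbour of $v_k$ is occupied by $\tilde e_{k-1}$ and one is the current hole; the remaining two neighbours yield a binary split. One choice is ``$\tilde e_k\in m$'', producing the remainder $Z_G[\tilde\Sigma_{k},\{\w,\b\}]$. The other choice determines a single alternating path from $\b$ up the right staircase, across the frontier edge at $v_k$, and down the left staircase to $\w$, whose matching edges are precisely $\tilde e_1,\dots,\tilde e_{k-1}$ together with the frontier dimer, and whose non-matching edges are precisely $e_1,\dots,e_{k+1}$; toggling this path fills the two holes and yields a perfect matching containing $\Sigma_{k+1}$. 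That the toggle-back is well defined — in any matching counted by $Z_G[\Sigma_{k+1}]$ the frontier edge and all $\tilde e_i$ are automatically absent, since their endpoints are already used by the $e_j$'s — shows this is a bijection onto $Z_G[\Sigma_{k+1}]$.

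The routine parts are the denominator-clearing, the telescoping, and the use of the hypothesis that $G$ is large enough to contain $\Sigma_l$ and $\tilde\Sigma_l$, so that all these edges and paths lie inside $G$. I expect the main obstacle to be the geometric bookkeeping in the one-step identity: correctly identifying $v_k$ and its two available partners as the forced staircase grows, tracking the parity alternation between a white and a black frontier, and verifying that the ``closing'' alternating path is uniquely determined, genuinely alternating, and has all interior vertices matched within the path — so that toggling it is a valid weight-preserving bijection landing exactly in $Z_G[\Sigma_{k+1}]$ (resp.\ $Z_G[\{\tilde e_0,e_1\}]$, $Z_G[\Sigma_2]$) and onto nothing more. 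Once the local geometry of the chosen coordinates on $\Z^2$ is pinned down, the interlacement structure should guarantee that these paths behave as claimed.
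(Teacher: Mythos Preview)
Your proposal is correct and follows essentially the same approach as the paper: both argue by conditioning on the dimer incident to the frontier vertex (your $v_k$, which coincides with the paper's $\bullet(0,m)$ or $\circ(0,m+1)$), obtain the same base-case trichotomy at $\circ(0,1)$, and derive the same one-step recursion $Z_G[\tilde\Sigma_{k-1},\{\w,\b\}]=Z_G[\Sigma_{k+1}]+Z_G[\tilde\Sigma_{k},\{\w,\b\}]$ (which the paper states separately for odd and even $k$). The only cosmetic difference is that the paper phrases the bijection as a graph identification (``the remaining graph is the same as $G\backslash\Sigma_{k+1}$''), whereas you phrase it as toggling an explicit alternating path between the two holes; these are the same bijection, and your remark that two of $v_k$'s four neighbours are unavailable is exactly the paper's observation that only two dimer choices remain at the frontier vertex.
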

The above lemma and its proof have a similar flavour to \cite[Proposition 3.5]{CF15} with the key difference that $\w$ and $\b$ are not on the same face.
\begin{proof}
 Consider the graph $G \backslash\{\w,\b\}$. There are three
 possibilities for the dimers incident to the vertex
 $ \tilde{\w}:=\circ(0,1)$. These are given by the edges
 $(\bullet(0,0),\circ(0,1))$, $(\bullet(0,1),\circ(0,1))$ and
 $(\bullet(1,1),\circ(0,1))=\tilde{e}_1$; see
 Figure~\ref{fig:firstmove}.
\begin{figure}[t]
\begin{center}
\includegraphics[width=\textwidth]{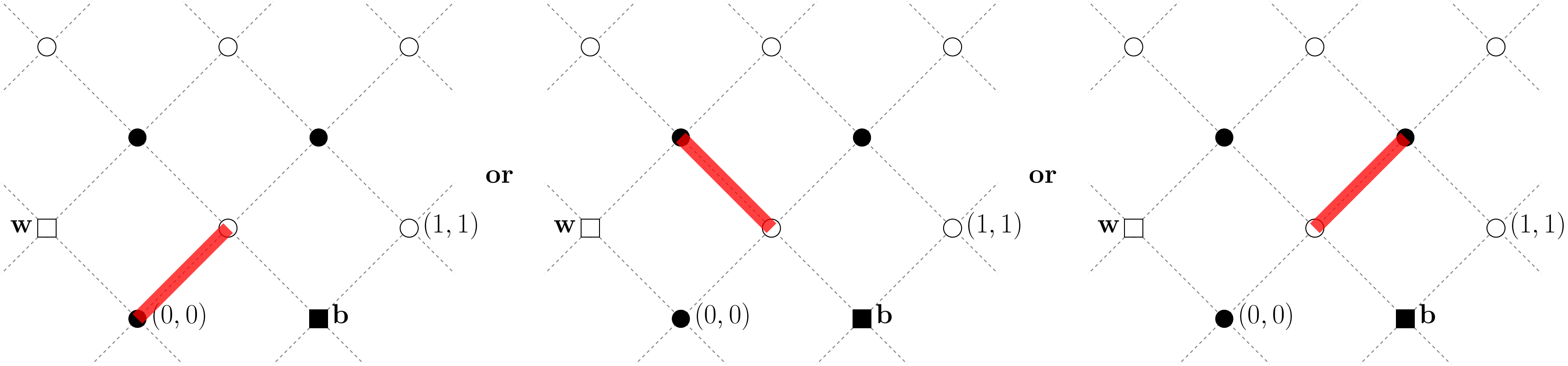}\hspace{5mm}
\caption{The three possible choices of dimers covering $\tilde{\w}$ on the graph $G\backslash \{\w,\b\}$. The vertices $\w$ and $\b$ are depicted by squares to stress that they have been removed from the graph, together with the edges incident to them.}
\label{fig:firstmove}
\end{center}
\end{figure}
If a dimer covers the edge
 $(\bullet(0,0),\circ(0,1))$, then the remaining graph is the same as
 $G \backslash \{\tilde{e}_0,e_1\}$. If a dimer covers the
 edge $(\bullet(0,1),\circ(0,1))$ instead, the remaining graph is the same as
 $G \backslash \Sigma_2$. This gives (remember that all edge weights equal $1$)
\begin{equation} \label{eq:removepoints}
Z_G[\{{\w,\b}\}] = Z_G[ \{\tilde{e}_0,e_1 \}] + Z_G[\Sigma_2] +  Z_G[ \tilde{\Sigma}_1,\{{\w,\b}\}],
\end{equation}
which can readily be seen from Figure~\ref{fig:firstmove}.

For $m \geq 1$, we have inductively the equations
\begin{equation}\label{eq:rec1}
Z_G[ \tilde{\Sigma}_{2m-1}, \{{\w,\b}\}]
	=  Z_G[ {\Sigma}_{2m+1}]
	+  Z_G[ \tilde{\Sigma}_{2m}, \{{\w,\b}\}],
\end{equation}
and
\begin{equation}\label{eq:rec2}
 Z_G[ \tilde{\Sigma}_{2m}, \{{\w,\b}\}]
	=  Z_G[ {\Sigma}_{2m+2}]
	+ Z_G[ \tilde{\Sigma}_{2m+1},\{ {\w,\b}\}].
\end{equation}
Indeed, \eqref{eq:rec1} follows because from the graph
${G \backslash( \tilde{\Sigma}_{2m-1}\cup \{\w,\b\})}$, there are two
possible dimers covering the vertex $\bullet(0,m)$: either the edge
$(\bullet(0,m),\circ(0,m+1))$ is covered by a dimer or the edge
$(\bullet(0,m),\circ(-1,m+1))=\tilde{e}_{2m}$ is covered by a dimer;
see Figure~\ref{fig:secondmove} for the case when $m=1$. Then,
\eqref{eq:rec1} follows after noticing that the graph
${G \backslash (\tilde{\Sigma}_{2m-1}\cup \{
  (\bullet(0,m),\circ(0,m+1))\} \cup \{\w,\b\}})$ is the same as the
graph ${G \backslash {\Sigma}_{2m+1}}$.
\begin{figure}
\begin{center}
\includegraphics[height=5cm]{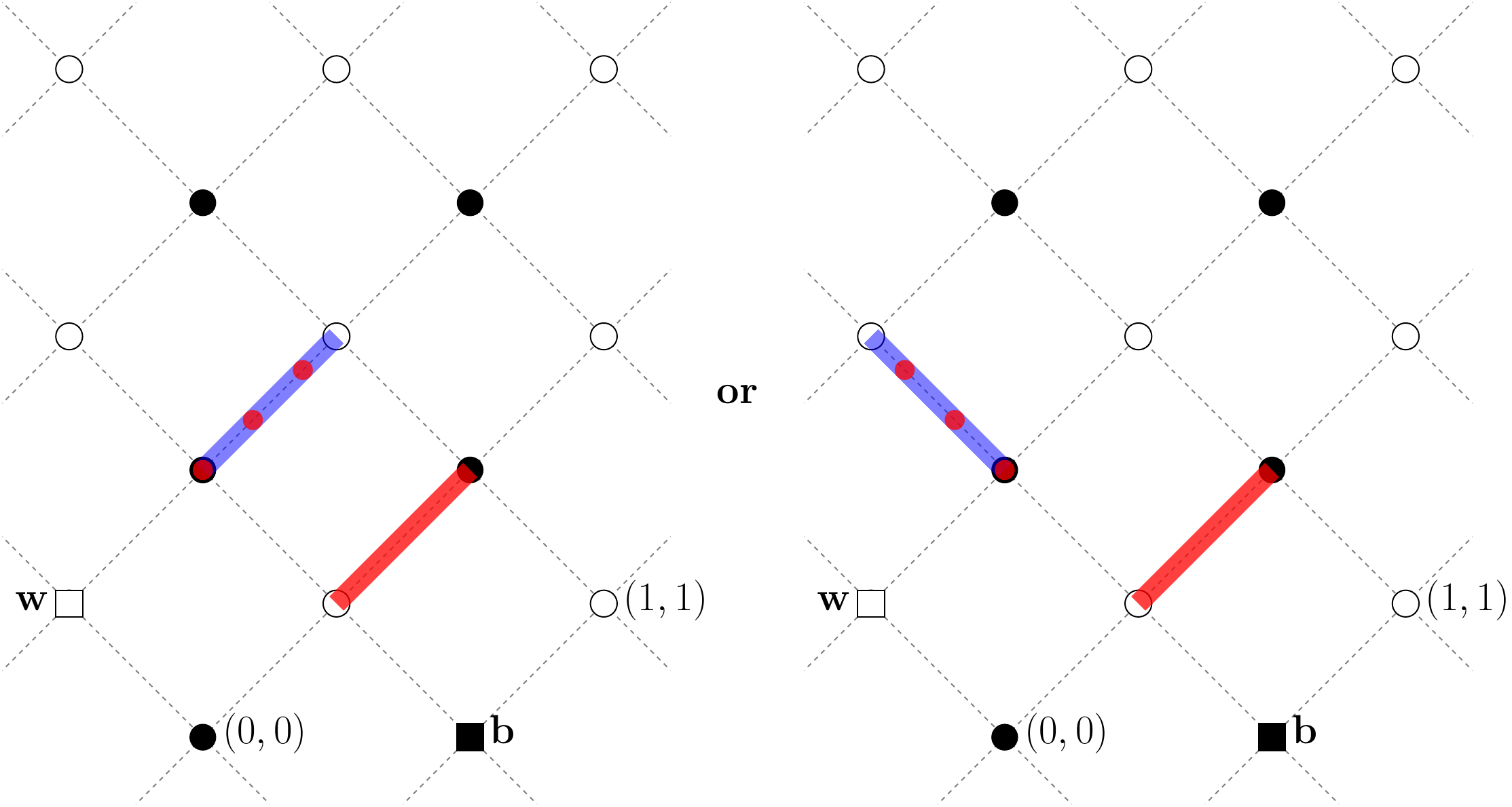}
\caption{There are two choices of dimers, drawn in blue with a dotted red line overlaid, covering the vertex $\circ(0,1)$ given that the dimer in blue is already present on $G\backslash \{\w,\b\}$. As in Figure~\ref{fig:firstmove}, the vertices $\w$ and $\b$ are depicted by squares to stress that they have been removed from the graph.
 The left figure leads to first term on the right side of~\eqref{eq:rec1} while the right figure leads to the second term on the right side of~\eqref{eq:rec1}.}
 \label{fig:secondmove}
\end{center}
\end{figure}

Similarly, to show \eqref{eq:rec2}, there are two possible dimers
covering the vertex $\circ(0,m+1)$ which are
$(\bullet(0,m+1),\circ(0,m+1))$ or
$(\bullet(1,m+1),\circ(0,m+1))=\tilde{e}_{2m+1}$. Then
\eqref{eq:rec2} follows after noticing that the graph
${G \backslash( \tilde{\Sigma}_{2m}\cup \{
 (\bullet(0,m+1),\circ(0,m+1))\} \cup \{\w,\b\})}$
is the same as the graph ${G \backslash {\Sigma}_{2m+2}}$. We
substitute the recursions in \eqref{eq:rec1} and
\eqref{eq:rec2} into \eqref{eq:removepoints} to give
\begin{equation}
Z_G[\{{\w,\b}\}]= Z_G[ \{\tilde{e}_0,e_1\} ] +  Z_G[ \tilde{\Sigma}_{l-1}, \{{\w,\b}\} ]
	+ \sum_{k=2}^l Z_G[ \Sigma_{k}].
\end{equation}
We divide the above equation by $Z_G$ and use the fact that
$\Pb_G(\Sigma_k)= Z_G[ \Sigma_k] /Z_G$. The claim is
proved.
\end{proof}

Recall that $K_G$ denotes the Kasteleyn matrix of $G$.
If we want a Kasteleyn matrix for $G\backslash\{\w,\b\}$, we cannot
just take the restriction of $K_G$. The problem is that the, since
$\Phi(f_i)=-1$ for the four $1\times 1$ square faces $f_1,\dots, f_4$
of $G$ around both $\w$ and $\b$ (recall \eqref{Phif} for the
definition of $\Phi(f)$), for the $2\times 2$ square faces
$f_{\w},f_{\b}$ that $G\backslash\{\w,\b\}$ has around $\w,\b$ we get
$\Phi(f_{\w})=\Phi(f_{\b})=(-1)^4=1$ which does not satisfy
\eqref{Phif}. This is easily fixed: to define a valid Kasteleyn
orientation on $G\backslash\{\w,\b\}$ we need to reverse the
orientation of a `path of edges' connecting the two faces
$f_{\w},f_{\b}$. In our case, as we explain below, it is sufficient to reverse the orientation of a single edge. A similar idea on a much more complicated scale was
used in great success in \cite{Dub15} to find correlations in the
monomer-dimer model.

We set $\tilde{\w}$ to be the vertex $\circ(0,1)$, $\b_1$ to be the
vertex $\bullet(0,0)$, $\b_2$ to be the vertex $\bullet(0,1)$ and
$\b_3$ to be the vertex $\bullet(1,1)$; see Figure~\ref{fig:sqcoords}.
For simplicity, we organize the matrix $K_G$ so that $\w$ and
$\tilde{\w}$ are in columns $1$ and $2$ while $\b_1$, $\b_2$, $\b_3$
and $\b$ are in rows $1$ to $4$ (in that order).
\begin{defin}
  \label{def:kgv}
  We let $K_G|_{V_1}$
 be the matrix obtained from the matrix $K_G$ by removing the rows
and columns associated to the black and white vertices from $V_1$
respectively, for some collection of vertices $V_1 \subset V$.
\end{defin}
Further, define $\tilde{K}_G=\tilde{K}_G({b,w})$ for
$w \in W_G\backslash \{\w\}$ and $b\in B_G \backslash\{\b \}$ by
\begin{equation} \label{eq:Ktilde}
\tilde{K}_G(b,w)= \left\{ \begin{array}{ll}
	K_G|_{\{\b,\w\}}({b,w}) & \mbox{if } (b,w)\not = (\b_1,\tilde{\w}), \\
	-K_G({\b_1,\tilde{\w}}) & \mbox{if } (b,w)= (\b_1,\tilde{\w}).
\end{array} \right.
\end{equation}
Observe that $\tilde K_G$ is a Kasteleyn matrix for $G\setminus\{\w,\b\}$, that satisfies \eqref{Phif}.

The following lemma relates $\frac{Z_G[ {\w,\b}]}{Z_G}$ with entries of the inverse of $K_G$.
\begin{lem} \label{lem:monomerformula}
It holds
\begin{equation}
\frac{Z_G[ \{{\w,\b}\}]}{Z_G}= \left|K_G^{-1}({\w,\b}) + \frac{2 K_G({\b_1, \tilde{\w}})}{ K_G({\b_1, w}) K_G({\b, \tilde{w}})} \Pb_G[\tilde{e}_0,e_1] \right|.
\end{equation}
\end{lem}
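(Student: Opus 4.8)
The plan is to write both partition functions as moduli of Kasteleyn determinants and reduce their ratio to entries of $K_G^{-1}$. Since $\tilde K_G$ is a genuine Kasteleyn matrix for $G\setminus\{\mathbf w,\mathbf b\}$ (as observed right after \eqref{eq:Ktilde}) and $Z_G=|\det K_G|$, Kasteleyn's theorem yields
\[
\frac{Z_G[\{\mathbf w,\mathbf b\}]}{Z_G}=\frac{|\det\tilde K_G|}{|\det K_G|}=\left|\frac{\det\tilde K_G}{\det K_G}\right|,
\]
so it suffices to evaluate $\det\tilde K_G/\det K_G$ and take its modulus.

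First I would exploit that $\tilde K_G$ coincides with the submatrix $K_G|_{\{\mathbf b,\mathbf w\}}$ except for the single entry at $(\mathbf b_1,\tilde{\mathbf w})$, whose sign is reversed. By linearity of the determinant in that entry,
\[
\det\tilde K_G=\det\!\big(K_G|_{\{\mathbf b,\mathbf w\}}\big)-2\,K_G(\mathbf b_1,\tilde{\mathbf w})\,\mathrm{C},
\]
where $\mathrm{C}$ is the cofactor of the $(\mathbf b_1,\tilde{\mathbf w})$ slot inside $K_G|_{\{\mathbf b,\mathbf w\}}$, so that up to an explicit sign $\mathrm{C}=\pm\det\big(K_G|_{\{\mathbf b,\mathbf b_1,\mathbf w,\tilde{\mathbf w}\}}\big)$. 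Dividing by $\det K_G$ I would then invoke the Jacobi identity relating complementary minors of $K_G$ to minors of $K_G^{-1}$: the $1\times1$ version gives $\det(K_G|_{\{\mathbf b,\mathbf w\}})/\det K_G=\pm K_G^{-1}(\mathbf w,\mathbf b)$, while the $2\times2$ version gives
\[
\frac{\det\big(K_G|_{\{\mathbf b,\mathbf b_1,\mathbf w,\tilde{\mathbf w}\}}\big)}{\det K_G}=\pm\,D,\qquad D:=\det\begin{pmatrix}K_G^{-1}(\mathbf w,\mathbf b_1)&K_G^{-1}(\mathbf w,\mathbf b)\\[2pt] K_G^{-1}(\tilde{\mathbf w},\mathbf b_1)&K_G^{-1}(\tilde{\mathbf w},\mathbf b)\end{pmatrix}.
\]
The last step is to recognize $D$ through the local-statistics formula \eqref{eq:localstats} applied to the two edges $\tilde e_0=(\mathbf b_1,\mathbf w)$ and $e_1=(\mathbf b,\tilde{\mathbf w})$: this identity reads $\Pb_G[\tilde e_0,e_1]=K_G(\mathbf b_1,\mathbf w)\,K_G(\mathbf b,\tilde{\mathbf w})\,D$, so after multiplying by $2K_G(\mathbf b_1,\tilde{\mathbf w})$ the two weights cancel against the denominator and reproduce exactly the coefficient $2K_G(\mathbf b_1,\tilde{\mathbf w})/\big(K_G(\mathbf b_1,\mathbf w)K_G(\mathbf b,\tilde{\mathbf w})\big)$ of the statement.

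The delicate point, and the step I expect to be the main obstacle, is the sign bookkeeping: because $|a+b|\ne|a-b|$, I must confirm that the two terms inside the modulus carry the same sign. Using the ordering fixed before Definition~\ref{def:kgv} ($\mathbf w,\tilde{\mathbf w}$ in columns $1,2$ and $\mathbf b_1,\mathbf b_2,\mathbf b_3,\mathbf b$ in rows $1,\dots,4$), the first Jacobi sign is $(-1)^{4+1}=-1$, giving $\det(K_G|_{\{\mathbf b,\mathbf w\}})/\det K_G=-K_G^{-1}(\mathbf w,\mathbf b)$; the cofactor sign $(-1)^{1+1}$ of the $(\mathbf b_1,\tilde{\mathbf w})$ slot inside $K_G|_{\{\mathbf b,\mathbf w\}}$ then combines with the second Jacobi sign $(-1)^{(1+4)+(1+2)}=+1$ to leave $D$ unchanged, with precisely the row/column orientation produced by \eqref{eq:localstats}. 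This yields $\det\tilde K_G/\det K_G=-\big[K_G^{-1}(\mathbf w,\mathbf b)+2K_G(\mathbf b_1,\tilde{\mathbf w})D\big]$, whose overall minus sign disappears under $|\cdot|$, giving the claimed $+$ sign. The one genuinely error-prone check is matching the orientation of the $2\times2$ minor coming from Jacobi with the determinant in \eqref{eq:localstats}, since a stray transposition there would flip the crucial relative sign.
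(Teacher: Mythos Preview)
Your proposal is correct and follows essentially the same route as the paper. Both arguments obtain the key identity $\det\tilde K_G=\det(K_G|_{\{\mathbf b,\mathbf w\}})-2K_G(\mathbf b_1,\tilde{\mathbf w})\det(K_G|_{\{\mathbf b,\mathbf b_1,\mathbf w,\tilde{\mathbf w}\}})$ (the paper via explicit column expansion, you via multilinearity in the single altered entry), identify the first ratio as $-K_G^{-1}(\mathbf w,\mathbf b)$, and turn the second into $\Pb_G[\tilde e_0,e_1]$; the only cosmetic difference is that the paper appeals directly to Kasteleyn's sign consistency for that last step, whereas you pass through Jacobi's identity and the local-statistics formula \eqref{eq:localstats}, which is equivalent.
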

\begin{proof}
 The only nonzero entries of $\tilde{K}_G$ in the first column, which
 is indexed by the vertex $\tilde{\w}$, are the first three rows,
 which are indexed by the vertices $\b_1$, $\b_2$ and
 $\b_3$. Expanding out the determinant using the first column and
 noting~\eqref{eq:Ktilde} gives
	\begin{equation}
\begin{split}
 \det \tilde{K}_G
  =& -K_G({\b_1,\tilde{w}}) \det K_G|_{\{\b,\w,\b_1,\tilde{w}\}} -K_G({\b_2,\tilde{w}}) \det K_G|_{\{\b,\w,\b_2,\tilde{w}\}}\\
 &+K_G({\b_3,\tilde{w}}) \det K_G|_{\{\b,\w,\b_3,\tilde{w}\}}.
\end{split}
\end{equation}
Similarly, the matrix ${K}_G|_{\{\b,\w\}}$, which corresponds to removing
$\b$ and $\w$ from $K_G$, has the same nonzero entries in the first
column as the matrix $\tilde{K}_G$. Expanding the determinant by the
first column of ${K}_G|_{\b,\w}$ gives
\begin{equation}
\begin{split}
\det {K}_G|_{\{\b,\w\}}
	 =&K_G({\b_1,\tilde{w}}) \det K_G|_{\{\b,\w,\b_1,\tilde{w}\}} -K_G({\b_2,\tilde{w}}) \det K_G|_{\{\b,\w,\b_2,\tilde{w}\}}\\
	&+K_G({\b_3,\tilde{w}}) \det K_G|_{\{\b,\w,\b_3,\tilde{w}\}}.
\end{split}
\end{equation}
Using the above two equations, we conclude that
	\begin{equation}
\det \tilde{K}_G = \det {K}_G|_{\{\b,\w\}}-2K_G({\b_1,\tilde{w}}) \det K_G|_{\{\b,\w,\b_1,\tilde{w}\}}.
\end{equation}
We divide both sides of the above equation by $\det K_G$ and take absolute values of both sides which gives
\begin{equation}
 \frac{Z_G[\{ {\w,\b}\}]}{Z_G} = \left|\frac{ \det \tilde{K}_G}{{\det K_G}} \right| =\left| \frac{ \det {K}_G|_{\{\b,\w\}}}{\det K_G}-2K_G({\b_1,\tilde{w}}) \frac{ \det K_G|_{\{\b,\w,\b_1,\tilde{w}\}}}{\det K_G} \right|,
\end{equation}
where we recall that
$Z_G=|\det K_G|,Z_G[\{ {\w,\b}\}]=|\det \tilde K_G|$ by Kasteleyn's
Theorem \cite{Kas61}. The claim then follows since
$\det {K}_G|_{\{\b,\w\}}/{\det K_G}=-K_G^{-1}({\w,\b})$ (recall that $\w$ and
$\b$ are in the first column and fourth row of $K_G$) and due to
\begin{equation}
\begin{aligned}
&2K_G({\b_1,\tilde{w}}) \frac{\det K_G|_{\{\b,\w,\b_1,\tilde{w}\}} }{\det K_G}	\\
=&\frac{2 K_G({\b_1, \tilde{\w}})}{ K_G({\b_1, w}) K_G({\b ,\tilde{w}})} { K_G({\b_1, w}) K_G({\b, \tilde{w}})} \frac{ \det K_G|_{\{\b,\w,\b_1,\tilde{w}\}} }{\det K_G} \\
=& \frac{2 K_G({\b_1, \tilde{\w}})}{ K_G({\b_1, w}) K_G({\b ,\tilde{w}})} \Pb_G[\tilde{e}_0,e_1],
\end{aligned}
\end{equation}
because $|\det K_G| =Z_G$ and the overall signs match up for $ \det K_G|_{\{\b,\w,\b_1,\tilde{w}\}} $ and ${\det K_G}$ \cite{Kas61}.
\end{proof}
The following corollary follows immediately from the statements of Lemmas~\ref{lem:finitegraphlemma} and~\ref{lem:monomerformula}.
\begin{cor} \label{cor:Ktospeed}
We have
\begin{equation}
\left| K_G^{-1}({\w,\b}) + \frac{2 K_G({\b_1, \tilde{\w}})}{ K_G({\b_1, w}) K_G({\b, \tilde{w}})} \Pb_G[\tilde{e}_0,e_1] \right|
= \Pb_G [\tilde{e}_0,e_1]+\sum_{k=2 }^{l} \Pb_G[\Sigma_k]
	 + {R^{l}_{G}},
\end{equation}
where $R^l_G$ is given in~\eqref{RlG}.
\end{cor}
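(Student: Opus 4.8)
The plan is to combine the two preceding lemmas, both of which compute one and the same quantity, namely the ratio $Z_G[\{\w,\b\}]/Z_G$. First I would invoke Lemma~\ref{lem:monomerformula}, which expresses this ratio as the modulus
\[
\left|K_G^{-1}({\w,\b}) + \frac{2 K_G({\b_1, \tilde{\w}})}{ K_G({\b_1, w}) K_G({\b, \tilde{w}})} \Pb_G[\tilde{e}_0,e_1] \right|,
\]
which is precisely the left-hand side of the corollary. Then I would invoke Lemma~\ref{lem:finitegraphlemma}, which expresses the very same ratio as $\Pb_G[\tilde{e}_0,e_1]+\sum_{k=2}^{l}\Pb_G[\Sigma_k]+R^{l}_G$, that is, the right-hand side of the corollary. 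Setting the two right-hand sides equal through their common value $Z_G[\{\w,\b\}]/Z_G$ yields the stated identity. In this sense the corollary is purely a bookkeeping step and no independent argument is needed.

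The only point deserving a word of care is the compatibility of hypotheses. Lemma~\ref{lem:finitegraphlemma} is stated for $l\geq 2$ and under the assumption that $G$ is large enough to contain $\Sigma_l$ and $\tilde\Sigma_l$, whereas Lemma~\ref{lem:monomerformula} holds for any finite $G\subset\mathbb Z^2$ containing the vertices $\w,\b,\b_1,\tilde\w$ and $\b$. Under the standing assumptions of the corollary (the same $l\geq 2$ and the same largeness of $G$) both lemmas apply simultaneously, so the chain of equalities closes without further conditions. I would make this explicit in one sentence so that the reader sees the two formulas are evaluated on the same graph with the same parameters.

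I do not expect any genuine obstacle at this stage: all of the analytic and combinatorial content already sits in the two lemmas. The recursive peeling of dimers at $\tilde\w$ and then along the vertices $\bullet(0,m),\circ(0,m+1)$ (via \eqref{eq:removepoints}--\eqref{eq:rec2}) produces the sum-plus-remainder form, while the single-edge sign reversal that turns $K_G|_{\{\b,\w\}}$ into a genuine Kasteleyn matrix $\tilde K_G$ for $G\setminus\{\w,\b\}$ produces the inverse-Kasteleyn form. Equating their outputs is immediate, and the absolute value on the left is already present from Lemma~\ref{lem:monomerformula}, so the corollary follows directly.
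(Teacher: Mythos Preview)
Your proposal is correct and matches the paper's approach exactly: the paper states that the corollary ``follows immediately from the statements of Lemmas~\ref{lem:finitegraphlemma} and~\ref{lem:monomerformula}'' and gives no further argument. Your observation that both lemmas compute the common quantity $Z_G[\{\w,\b\}]/Z_G$ is precisely the intended link.
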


\begin{lem} \label{lem:RlGsquare}
We have
\begin{equation}
0\le R^l_G \leq \Pb_G[e_1,\dots,e_{l-1}].
\end{equation}
\end{lem}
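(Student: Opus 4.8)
The lower bound is immediate, since $R^l_G=Z_G[\tilde\Sigma_{l-1},\{\w,\b\}]/Z_G$ is a ratio of partition functions with positive edge weights, hence nonnegative. (Equivalently, comparing Lemma~\ref{lem:finitegraphlemma} for consecutive values of $l$ gives $R^l_G-R^{l+1}_G=\Pb_G[\Sigma_{l+1}]$; since $R^l_G=0$ once $\Sigma_{l-1}$ no longer fits into the finite graph $G$, one obtains by telescoping the identity $R^l_G=\sum_{k\ge l+1}\Pb_G[\Sigma_k]\ge0$, which also re-expresses the remainder and recasts the desired upper bound as the tail-sum bound $\sum_{k\ge l+1}\Pb_G[\Sigma_k]\le\Pb_G[\Sigma_{l-1}]$.)

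For the upper bound the plan is to reduce everything to a comparison internal to the graph $H':=G\setminus\Sigma_{l-1}$, on which $e_1,\dots,e_{l-1}$ are forced. The first step is the combinatorial identity
\[
 G\setminus\{\tilde\Sigma_{l-1},\{\w,\b\}\}=(G\setminus\Sigma_{l-1})\setminus\{A,C\},
\]
which one checks by comparing the two sets of deleted vertices: forcing $\tilde e_1,\dots,\tilde e_{l-1}$ and deleting $\w,\b$ removes exactly the same vertices as forcing $e_1,\dots,e_{l-1}$ and additionally deleting one black vertex $A$ and one white vertex $C$ at the tops of the two ``prongs'' of the forced staircase (for $l-1=2m$ one finds $A=\bullet(1,m)$ and $C=\circ(-1,m+1)$, with the analogous statement in the odd case). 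Hence $Z_G[\tilde\Sigma_{l-1},\{\w,\b\}]=Z_G[\Sigma_{l-1},\{A,C\}]$, and the claim becomes
\[
 Z_G[\Sigma_{l-1},\{A,C\}]\le Z_G[\Sigma_{l-1}],
\]
i.e. the number of near-perfect matchings of $H'$ having monomers exactly at $A$ and $C$ is at most the number of perfect matchings of $H'$.

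I would prove this last inequality by constructing an injection from near-perfect matchings of $H'$ with monomers $\{A,C\}$ into perfect matchings of $H'$. Since $A$ is black and $C$ is white, every such near-matching contains an alternating path from $A$ to $C$; flipping it along a canonically chosen path (say the leftmost one in the planar embedding) produces a perfect matching, and the path should be recoverable from the image by planarity. The geometric input I would exploit is that $A$ and $C$ are joined in $H'$ by the short path $A-\circ(0,m+1)-\bullet(0,m+1)-C$ and that the forced staircase walls off the region below it, which should pin down the canonical path and its inverse.

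The hard part will be the injectivity of this path-flip, and this is precisely where the square-lattice geometry must enter. The naive ``delete one black and one white vertex'' inequality is false for general bipartite planar graphs, and can fail even when the two monomers lie on a common face: a small example has $Z_{\Gamma}[\{A,C\}]=2>1=Z_{\Gamma}$. Consequently no soft monotonicity or determinantal estimate will suffice (the determinantal ratio equals $|K_{H'}^{-1}(C,A)|$, which need not be $\le1$), and the argument must genuinely use the rigidity of the lattice near the staircase notch—e.g. to guarantee that the canonically chosen alternating path never re-enters the forced staircase and is stable under flipping—to make the injection well defined and invertible.
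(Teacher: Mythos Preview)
Your combinatorial reduction is exactly right and coincides with the paper's first step: writing $r=l-1$, the set of edges incident to $\tilde\Sigma_r\cup\{\w,\b\}$ equals the set incident to $\Sigma_r\cup\{A,C\}$ with $A=\bullet(1,r/2)$, $C=\circ(-1,r/2+1)$ (for $r$ even; analogously for $r$ odd), so the upper bound becomes $Z_{H'}[\{A,C\}]\le Z_{H'}$ for $H'=G\setminus\Sigma_r$.

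Where you part ways with the paper is in the final inequality, and this is where your proposal has a genuine gap. The paper does \emph{not} attempt an alternating-path injection; it uses Kasteleyn theory. Because $A$ and $C$ lie on the same face of $H'$, the Kasteleyn orientation of $H'$ restricts to a valid one for $H'\setminus\{A,C\}$, and hence
\[
 \frac{Z_{H'}[\{A,C\}]}{Z_{H'}}=\left|\frac{\det\bigl(K_{H'}|_{\{A,C\}}\bigr)}{\det K_{H'}}\right|.
\]
With all edge weights equal to $1$, Kasteleyn's theorem makes every nonzero term in each determinant expansion carry the same phase, and the paper concludes $\le 1$ by arguing that each term in the numerator is also present in the denominator.

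Your proposal, by contrast, only \emph{outlines} a path-flip injection and explicitly leaves injectivity---the whole content of the inequality---undone. As written this is not a proof. Your pendant-type counterexample is correct and shows that the bare ``same face'' hypothesis is not enough in general; the paper's one-line justification is indeed terse on this point, and some lattice-specific input is implicitly being used. But you should not dismiss the determinantal route outright: it is the paper's intended argument, and with unit weights it reduces to the same combinatorial target $\#\{\text{matchings of }H'\setminus\{A,C\}\}\le\#\{\text{matchings of }H'\}$ as your injection, only with the sign bookkeeping handled by Kasteleyn's theorem rather than by an explicit bijection.
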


\begin{proof}The lower bound is obvious since $R^l_G$ is the ratio of
  two partition functions.  We give the proof of the upper bound when
  $l-1=r$ is even; a similar argument holds for $r$ odd.

Notice that the set of edges incident to
$\tilde{\Sigma}_{r} \cup \{{ \w} ,{\b } \}$ equals the set of edges
incident to $\Sigma_r \cup \{ \circ(-1, r/2+1) ,\bullet (1,r/2) \}$
and so we have that
\begin{equation}
\begin{aligned}
 Z_G[ \tilde{\Sigma}_{r} , \{{\w ,\b}\}]&=Z_G[ \Sigma_r , \{{ \circ(-1, r/2+1) ,\bullet (1,r/2)\} }]\\
 &=Z_{G\backslash \Sigma_r}[\{ { \circ(-1, r/2+1) ,\bullet (1,r/2) }\}],
\end{aligned}
\end{equation}
where as usual $G\backslash \Sigma_r$ stands for the graph $G$ with
all incident edges to $\Sigma_r$ removed, that is
$Z_{G\backslash \Sigma_r}=Z_G[\Sigma_r]$. On the graph
$G\backslash \Sigma_r$, the vertices $\circ(-1, r/2+1)$ and
$\bullet (1,r/2) \}$ are on the same face. As is easily checked, this
means that removing these vertices does not change the overall
Kasteleyn orientation from $G \backslash \Sigma_r$. Hence, we have
(recall the notation $K_G|_{V_1}$ from Definition \ref{def:kgv})
\begin{equation}
 \frac{Z_{G \backslash \Sigma_r} [ \{{ \circ(-1, r/2+1) ,\bullet (1,r/2)}\}]}
{Z_{G \backslash \Sigma_r}} = \left|\frac{\det( K_{G\backslash \Sigma_r}|_{\{\bullet (1,r/2),\circ(-1, r/2+1) \}})}{\det( K_{G\backslash \Sigma_r})} \right| \leq 1.
\end{equation}
The inequality holds because each term in the expansion of the
determinant in the numerator is also present in the denominator. By noting that the denominator could have
more terms and since all the terms in the expansion of the
determinants have the same sign by Kasteleyn's theorem, the inequality follows. Multiplying
both sides of the above inequality by $Z_{G \backslash \Sigma_r}$ and
dividing both sides by $Z_G$ gives the result.
\end{proof}

\subsection{Infinite volume limit and proof of \eqref{vz2}}
\label{sec:varianza}
In this section, we extend the formula in Corollary~\ref{cor:Ktospeed}
to the infinite volume limit:
\begin{prop}\label{prop:squareinfinite}
Let $\bar K$ be the Kasteleyn matrix of $\mathbb Z^2$ defined in Section~\ref{subsec:Gibbs} and let $B_1,B_2$ be
related to the slope $\rho$ by \eqref{eq:Zslope1} and \eqref{eq:Zslope2}. Then,
\begin{equation}
  \begin{split} \label{speedinfinitevol}
    &\frac {e^{B_2}}{e^{B_1}}\left| \bar{K}^{-1}({\w},{\b}) +
      \frac{2 \bar K({\b_1} ,\tilde{\w})}{ \bar K({\b_1}, {\w}) \bar K({\b} ,\tilde{\w})} \pi^{\mathbb Z^2}_\rho[\tilde{e}_0,e_1] \right|
    = \pi^{\mathbb Z^2}_\rho [\tilde{e}_0,e_1] +\sum_{k=2 }^{\infty} \pi^{\mathbb Z^2}_\rho [ e_1,\dots,e_k].
\end{split}
\end{equation}
\end{prop}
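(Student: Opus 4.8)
The plan is to obtain \eqref{speedinfinitevol} by letting the finite graph $G$ in Corollary~\ref{cor:Ktospeed} exhaust $\mathbb Z^2$ through a sequence of tori carrying the magnetic-field weights \eqref{eq:KZ} that fix the slope $\rho$, and only afterwards sending the truncation parameter $l\to\infty$. Concretely, I would first fix $l$ and let $G\to\mathbb Z^2$. By the toroidal exhaustion of \cite{KOS03}, two convergences hold: the local statistics satisfy $\Pb_G[\,\cdot\,]\to\pi^{\mathbb Z^2}_\rho[\,\cdot\,]$ for each of the finitely many events appearing (so both $\Pb_G[\tilde e_0,e_1]$ and the partial sum $\sum_{k=2}^l\Pb_G[\Sigma_k]$ pass to the limit), and the inverse-Kasteleyn entry $K_G^{-1}(\w,\b)$ converges to the full-plane entry $\bar K^{-1}(\w,\b)$ of \eqref{eq:wholeplaneK}; the remaining entries $K_G(\b_1,\tilde\w),K_G(\b_1,\w),K_G(\b,\tilde\w)$ are local and converge trivially. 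Thus, for fixed $l$, the finite-volume identity of Corollary~\ref{cor:Ktospeed} becomes an identity for $\pi^{\mathbb Z^2}_\rho$ with a limiting remainder $R^l_\infty:=\lim_{G}R^l_G$.

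The prefactor $e^{B_2}/e^{B_1}$ comes from the mismatch between the unit weights used in Corollary~\ref{cor:Ktospeed} and the magnetic-field weights carried by $\bar K$. Since the bulk face weights of \eqref{eq:KZ} are all equal to $1$, the two weightings are gauge-equivalent on any simply-connected region containing the finitely many vertices and edges involved (Remark~\ref{rem:gauge}); writing $\bar K(b,w)=F_\bullet(b)\,K_G(b,w)\,F_\circ(w)$ with $F_\circ,F_\bullet$ exponential in the coordinates, one checks that the two terms inside the absolute value both rescale by the same scalar $1/(F_\circ(\w)F_\bullet(\b))$, while the probabilities on the right-hand side are gauge-invariant. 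A direct computation of the gauge functions gives $|F_\circ(\w)F_\bullet(\b)|=e^{B_2-B_1}$, which is exactly the prefactor in \eqref{speedinfinitevol}; this is the step where the explicit form of the weights \eqref{eq:KZ} and the positions $\w=\circ(-1,1)$, $\b=\bullet(1,0)$ enter.

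It remains to dispose of $R^l_\infty$. By Lemma~\ref{lem:RlGsquare} one has $0\le R^l_G\le\Pb_G[e_1,\dots,e_{l-1}]$ for every $G$, so passing to the limit yields $0\le R^l_\infty\le\pi^{\mathbb Z^2}_\rho[e_1,\dots,e_{l-1}]$. Sending $l\to\infty$, the right-hand side is the tail of the series $\sum_k\pi^{\mathbb Z^2}_\rho[\Sigma_k]$, which was shown to converge in Section~\ref{sec:avc}; hence $R^l_\infty\to0$ and \eqref{speedinfinitevol} follows. The main obstacle is precisely the order of limits: one must send $G\to\mathbb Z^2$ first, for fixed $l$, so as to exploit entrywise convergence of the inverse Kasteleyn matrix and convergence of the finitely many local events, and only then send $l\to\infty$, where the uniform-in-$G$ bound of Lemma~\ref{lem:RlGsquare} controls the remainder. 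The two points requiring genuine care are the convergence $K_G^{-1}(\w,\b)\to\bar K^{-1}(\w,\b)$ for the toroidal exhaustion with the slope-$\rho$ magnetic fields, and the verification that the gauge factor equals exactly $e^{B_2-B_1}$.
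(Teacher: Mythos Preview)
Your overall two-step strategy (pass to the infinite-volume limit for fixed $l$, then let $l\to\infty$ using the bound of Lemma~\ref{lem:RlGsquare}) is exactly the paper's, but your choice of exhausting graphs creates a genuine gap.

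Corollary~\ref{cor:Ktospeed} and the lemmas feeding into it are proved under two standing assumptions made explicit at the start of Section~\ref{sec:w1}: the graph $G$ is a finite \emph{planar} subgraph of $\mathbb Z^2$, and all edge weights equal $1$. Both are used essentially. Lemma~\ref{lem:monomerformula} relies on Kasteleyn's identity $Z_G=|\det K_G|$ and on the fact that all terms in the determinant expansion come with the same sign; Lemma~\ref{lem:RlGsquare} uses the same sign property. On a torus neither holds: the dimer partition function is a signed combination of four determinants (one per spin structure), and individual determinant expansions mix signs. The paper itself flags this pitfall in the Remark at the end of Section~\ref{subsectK1}, noting that a toroidal argument for the analogous honeycomb identity ``contains mistakes since it does not take into account the fact that on the torus the terms in the expansion of the determinant of $K_{T_L}$ come with different signs.'' Your proposal runs into precisely this problem. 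A secondary issue is that putting the magnetic weights \eqref{eq:KZ} on the torus is not gauge-equivalent to unit weights globally: the holonomies around the two nontrivial cycles are $e^{LB_1}$ and $e^{LB_2}$, so your appeal to Remark~\ref{rem:gauge} on a ``simply-connected region'' does not let you import Corollary~\ref{cor:Ktospeed} as stated.

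The paper's fix is to keep the finite graph planar and the weights equal to $1$, by taking $G=G_L$ to be an Aztec diamond centered at a point $\lfloor \xi L\rfloor$ in its liquid region. The slope $\rho$ is then selected not through edge weights but through the macroscopic position $\xi$: by \cite{CJY15}, local statistics near $\xi$ converge to $\pi^{\mathbb Z^2}_{\hat\rho(\xi)}$, and every $\rho\in P^{\mathbb Z^2}$ arises from some $\xi$. The same reference gives the entrywise convergence of $K_{G_L}^{-1}$ to $\overline K^{-1}$ up to the explicit gauge factor $e^{B_1(y_1-x_1-1)+B_2(y_2-x_2)}$, which at $(\w,\b)=(\circ(-1,1),\bullet(1,0))$ produces the prefactor $e^{B_2-B_1}$. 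With this planar choice, Corollary~\ref{cor:Ktospeed} and Lemma~\ref{lem:RlGsquare} apply verbatim, and your Step~2 ($l\to\infty$) goes through exactly as you wrote.
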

Note that the sum that appears in the right side is the same as in the
definition of average current, \eqref{vz22}.

\begin{proof}[Proof of Proposition~\ref{prop:squareinfinite}]
 We start from Corollary~\ref{cor:Ktospeed} and we take a graph $G$
 that tends to $\mathbb Z^2$ in such a way that around the vertices
 $\w,\b$ the dimer statistics $\mathbb P_G$ tends to that of
 $\pi^{\mathbb Z^2}_\rho$. Our choice for $G$ is a suitable space translation of the so-called Aztec
 diamond: using the same
 coordinate system as above, the Aztec diamond $A_L$ is a $L\times L$ subset of $\mathbb{Z}^2$ whose
 white vertices are given by
 $ W_{A_L}:=\{ \circ (x,y): 0 \leq x \leq L-1,0 \leq y \leq L\}$, black
 vertices given by
 $B_{A_L}:=\{\bullet (x,y): 0\leq x \leq L,0\leq y \leq L-1\}$ and whose edge set contains all the edges connecting $W_{A_L}$ to $B_{A_L}$.
 As in Section~\ref{sec:w1} we assign
 weight $1$ to all the edges of $A_L$. The Kasteleyn orientation we choose is $K_{A_L}(b,w)=1$ for every vertical edge and
$K_{A_L}(b,w)=i$ for every horizontal edge.

 For uniformly random domino tilings of Aztec diamonds, the local
 behavior of the tiling separates, as $L\to \infty$, into two
 distinct macroscopic regions and the interface between these two
 regions is referred to as the \emph{limit shape} or \emph{limit
 shape curve}. See~\cite{KenLectures} for a more complete
 overview: here we recall only what we need for our present work. Rescaling the Aztec diamond by $L$, so that the corners
 are given by $(0,0), (1,0), (1,1)$ and $(0,1)$, the limit shape
 is given by a circle of radius $1/2$ whose center is at $(1/2,1/2)$.
 We denote the open disk inside the circle by $\mathcal{D}$.

Fix $\xi=(\xi_1,\xi_2)\in \mathcal D$ and let $G_L$ be the Aztec diamond $A_L$ translated by $(-\lfloor \xi_1 L\rfloor,
-\lfloor \xi_2 L\rfloor)$. Then, \cite[Theorem 2.9]{CJY15} says that for any local dimer observable $f$, one has the convergence
\begin{equation}
\label{cb}
\lim_{L\to\infty} \mathbb P_{G_L}(f)=\pi^{\mathbb Z^2}_{\hat\rho(\xi)}(f),
\end{equation}
where $\hat\rho(\xi):=\rho(B(\xi))$, with
$\rho(\cdot)$ as in \eqref{eq:Zslope1}-\eqref{eq:Zslope2} and
$B=B(\xi)=(B_1(\xi),B_2(\xi))$ given by
\begin{equation}
 \label{eq:biro}
B_i(\xi)= 1/2 \log (\xi_i/(1-\xi_i)).
\end{equation}
Moreover, the inverse Kasteleyn matrix $K^{-1}_{A_L}$ satisfies, for every fixed pairs of vertices $(\circ(x_1,x_2), \bullet (y_1,y_2))$,

\begin{equation}
\label{eq:gdn}
	\lim_{L\to\infty}K^{-1}_{G_L}(\circ(x_1,x_2), \bullet (y_1,y_2))= e^{B_1(y_1-x_1-1)} e^{B_2(y_2-x_2)}
	\overline{K}^{-1}(\circ(x_1,x_2), \bullet (y_1,y_2)),
\end{equation}
To understand the exponential factor in the above formula, first
notice that the left side of~\eqref{eq:gdn} is the inverse Kasteleyn matrix corresponding to a Kasteleyn weighting of
$\mathbb{Z}^2$ with weights equal to $1$ and $\mathrm{i}$, while $\overline K^{-1}$ in the
right side of~\eqref{eq:gdn} corresponds to $\mathbb{Z}^2$ having
weights described by the Kasteleyn matrix in~\eqref{eq:KZ}. The
measures on each of these graphs are gauge equivalent (in the sense of Remark~\ref{rem:gauge}) as there is a
gauge transformation from the graph corresponding to the left side
of~\eqref{eq:gdn} to the graph corresponding to the right side
of~\eqref{eq:gdn}. More explicitly, this is given by multiplying the
vertices $\bullet(y_1,y_2)$ by $e^{-B_2y_2-B_1y_1}$ and the vertices
$\circ(x_1,x_2)$ by $e^{B_1(x_1+1)+B_2x_2}$. This explains the
prefactor on the right side of~\eqref{eq:gdn}. The convergence of
$\lim_{L\to\infty}K^{-1}_{G_L}(\circ(x_1,x_2), \bullet (y_1,y_2))$ to
its full plane counterpart is given in the proof of \cite[Theorem
2.9]{CJY15}; see also Remark~\ref{rem:kastconv}.
			
We will see in a moment that it is always possible to find
$\xi\in \mathcal D$ such that $\hat\rho(\xi)$ equals the slope
$\rho\in P^{\mathbb Z^2}$ that appears in \eqref{speedinfinitevol}.
We have now all necessary ingredients to prove
\eqref{speedinfinitevol}. We start from
Corollary~\ref{cor:Ktospeed} with $G=G_L$. The probabilities
$\mathbb P_{G_L}[\dots]$ tend as $L\to\infty$ to the corresponding
$\pi_\rho^{\mathbb Z^2}$ probabilities by \eqref{cb}. The matrix element
$K_{G_L}^{-1}(\w,\b)$ tends, by \eqref{eq:gdn}, to
\begin{equation}
e^{B_2-B_1}\overline{K}^{-1}(\w,\b),
\end{equation}
while
\begin{equation}
 \frac{K_{G_L}(\b_1,\tilde{\w})}{K_{G_L}({\b_1},{\w})K_{G_L}({\b},\tilde{\w})}=-1=e^{B_1-B_2}
 \frac{\overline K(\b_1,\tilde{\w})}{\overline
 K({\b_1},{\w})\overline K(\b,\tilde{\w})}
\end{equation}
(recall \eqref{eq:KZ} and the choice of Kasteleyn matrix for the Aztec diamond, which is just as in \eqref{eq:KZ} with $B_1=B_2=0$).
Finally, by Lemma~\ref{lem:RlGsquare} we see that
\begin{equation}
0\le \limsup_{L\to\infty} R^l_{G_L}\le \pi^{\mathbb Z^2}_\rho[e_1,\dots,e_{l-1}],
\end{equation}
so that, letting $l\to\infty$ we obtain \eqref{speedinfinitevol} (we have already remarked in Section~\ref{sec:avc} that the series is convergent).

It remains only to prove that the image of the map
$\xi\in\mathcal D\mapsto \hat\rho(\xi)$ is the whole open square
$P^{\mathbb Z^2}$. In fact, it is easy to verify that
the map $\xi\mapsto (B_1,B_2),B_i=1/2 \log (\xi_i/(1-\xi_i))$ gives a
one-to-one correspondence between $\mathcal D$ and the amoeba
$\mathcal B$ defined in \eqref{eq:ameba} and we already mentioned that the map
$B\in \mathcal B\mapsto \rho(B)\in P^{\mathbb Z^2}$ in
\eqref{eq:Zslope1}-\eqref{eq:Zslope2} is also a bijection.
\end{proof}

\begin{rem}\label{rem:kastconv}
	For simplicity, the weights on $A_L$ were chosen to be  $1$ and $\mathrm{i}$.
	The Kasteleyn matrix for the Aztec diamond in the uniform case in~\cite{CJY15} differs by $K_{A_L}$ only up to sign, which means entries of the inverse differ up to a sign.

	The proof of \cite[Theorem 2.9]{CJY15} involves showing the
 convergence of the entries of $K_{G_L}^{-1}$ as $L$ tends to
 infinity. Note that this limiting inverse Kasteleyn matrix is
 an inverse of a Kasteleyn matrix different from the one we
 considered in this paper given in~\eqref{eq:KZ}; the two are gauge
 equivalent. We believe that the choice in this
 paper is more natural and aesthetically pleasing, mainly
 because the slopes are embedded into the edge weights, which
 mirrors the honeycomb case.
\end{rem}

\begin{proof}[Proof of Theorem~\ref{thm:square}]
We now compute the speed of growth for dynamics on $\mathbb{Z}^2$. Recalling formula \eqref{vz22} for the speed and Proposition~\ref{prop:squareinfinite}, we see that
\begin{equation}
v^{\mathbb{Z}^2}(\rho)
=\frac{e^{B_2}}{e^{B_1}}\left| \bar{K}^{-1}({\w},{\b}) + \frac{2 \bar K({\b_1} ,\tilde{\w})}{ \bar K({\b_1}, {\w}) \bar K({\b} ,\tilde{\w})} \pi^{\mathbb Z^2}_\rho[\tilde{e}_0,e_1] \right|	 - \pi^{\mathbb Z^2}_\rho[\tilde{e}_0,e_1]+ \pi^{\mathbb Z^2}_\rho[\tilde{e}_0^c,e_1],
\end{equation}
where $\tilde{e}_0^c$ is the event that the edge $\tilde{e}_0$ is not
present. The result \eqref{vz2} then follows immediately from
Lemma~\ref{lem:positivity} in Appendix~\ref{App:Positivity}.
\end{proof}

\section{Large time height fluctuations on ${\cal H}$}\label{sectThmVar}

\begin{rem}
  \label{rem:symm}
  The Gibbs measure $\pi_{\rho}^{\mathcal H}$ is invariant under
  translations and reflection through the center of any hexagonal
  face. In fact, such transformations clearly preserve the Gibbs
  property (the measure is locally uniform, conditioned on the
  configuration outside any finite domain) and leave the three dimer
  densities unchanged.  Given that the Gibbs measure with given
  densities is unique, the claim follows. Note that, under reflection,
  the function $V(e)$ transforms into $\widehat V(e')$ for some $e'$
  that depends on the face chosen as center of reflection. Here,
  $\widehat V(e)$ is the number of hexagonal faces that the
  \emph{lowest} horizontal dimer \emph{above} $e$ has to cross in
  order to reach $e$ ($\widehat V(e)=0$ if the move is not allowed).
\end{rem}
Recall that Theorem~\ref{th:varianza} follows by proving the
equilibrium estimate \eqref{eq:varV}.
For $i \in\{0,1\}$, let
$\Lambda_L^i$ denote the set of horizontal edges
$e=(\bullet(x+1,n),\circ(x,n+1))\in\Lambda_L$ with $x\!\!\!\mod 2=i$,
i.e., those in even (for $i=0$) or odd (for $i=1$) columns. By
Cauchy-Schwarz and Remark \ref{rem:symm} we have
\begin{equation}\label{eq4.2}
\begin{split}
\mathrm{Var}_{\pi_{\rho}^{\mathcal{H}}}\bigg(\sum_{e \in \Lambda_L} V(e)\bigg)
&=\mathrm{Var}_{\pi_{\rho}^{\mathcal{H}}}\bigg(\sum_{e \in \Lambda_L^0} \widehat V(e)+\sum_{e \in \Lambda_L^1}\widehat V(e)\bigg) \\
& \leq 2\mathrm{Var}_{\pi_{\rho}^{\mathcal{H}}}\bigg(\sum_{e \in \Lambda_L^0} \widehat V(e)\bigg)+2\mathrm{Var}_{\pi_{\rho}^{\mathcal{H}}}\bigg(\sum_{e \in \Lambda_L^1} \widehat V(e)\bigg).
\end{split}
\end{equation}

\begin{prop} \label{prop:evenoddest}
For $i\in\{0,1\}$, we have for some constant $C(\rho)>0$
\begin{equation}
 \label{eq:CSh}
 \mathrm{Var}_{\pi_{\rho}^{\mathcal{H}}}\bigg(\sum_{e \in \Lambda_L^i} \widehat V(e)\bigg)
=\sum_{e_1,e_2 \in \Lambda_L^i} \pi_{\rho}^{\mathcal{H}}(\widehat V(e_1);\widehat V(e_2))
 \leq C L^2 \log L,
\end{equation}
with $\pi(f;g):=\pi(f g)-\pi(f)\pi(g)$ (the covariance of $f$ and $g$).
\end{prop}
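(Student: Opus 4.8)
The plan is to turn the variance \eqref{eq:CSh} into a sum of covariances of \emph{local} events, and then to combine an exponential tail bound for $\widehat V(e)$ with the $1/\mathrm{dist}$ decay of the inverse Kasteleyn matrix \eqref{hexagonK-1}. First I would write
\[
\widehat V(e)=\sum_{k\ge1}\Id[\widehat V(e)\ge k],
\]
so that, by bilinearity of the covariance,
\[
\pi_\rho^{\mathcal H}(\widehat V(e_1);\widehat V(e_2))=\sum_{k_1,k_2\ge1}\pi_\rho^{\mathcal H}\big(\Id[\widehat V(e_1)\ge k_1];\Id[\widehat V(e_2)\ge k_2]\big).
\]
The gain is that each event $A_j:=\{\widehat V(e_j)\ge k_j\}$ is determined by the dimer configuration in a region $R_j$ of diameter $O(k_j)$ around $e_j$: for the lowest horizontal dimer above $e_j$ to be able to descend by at least $k_j$ faces, the $k_j$ slots immediately above $e_j$ in its column must be free of particles, which forces a prescribed string of $\Theta(k_j)$ dimers confined (through the interlacement constraint) to a bounded cone at $e_j$. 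Evaluating the probability of such a deterministic pattern by \eqref{eq:localstatsG} and using that $\rho$ is interior to $P^{\mathcal H}$ (so the pattern is atypical) gives the large-deviation bound $\pi_\rho^{\mathcal H}(A_j)\le Ce^{-ck_j}$, $c=c(\rho)>0$, uniform in $e_j$. In particular all moments of $\widehat V(e)$ are bounded uniformly, so the diagonal terms $\sum_e\pi_\rho^{\mathcal H}(\widehat V(e);\widehat V(e))$ already account for an admissible $O(L^2)$.

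Next I would estimate the off-diagonal covariances of the indicators, writing $r=\mathrm{dist}(e_1,e_2)$. Using \eqref{eq:localstatsG} for the joint and marginal probabilities, $\pi_\rho^{\mathcal H}(A_1\cap A_2)$, $\pi_\rho^{\mathcal H}(A_1)$ and $\pi_\rho^{\mathcal H}(A_2)$ are determinants built from the blocks of $[\overline K^{-1}(w_i,b_j)]$ indexed by $R_1\cup R_2$, and their truncated combination equals the sum, over permutations that do not preserve the partition $R_1\sqcup R_2$, of the corresponding signed products. Each such permutation carries at least one matrix entry $\overline K^{-1}(w,b)$ with $w\in R_1,b\in R_2$ and at least one with $w\in R_2,b\in R_1$. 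Feeding in the decay estimate
\[
|\overline K^{-1}(w,b)|\le\frac{C}{1+|w-b|},
\]
obtained by steepest descent on the double integral \eqref{hexagonK-1} uniformly in $\rho\in P^{\mathcal H}$, together with Hadamard's inequality for the remaining factors and the exponential smallness of the marginals, I expect a bound of the form
\[
\big|\pi_\rho^{\mathcal H}(\Id_{A_1};\Id_{A_2})\big|\le C\,e^{-c(k_1+k_2)}\frac{P(k_1,k_2)}{1+r^2}
\]
valid when $r\ge k_1+k_2$, with $P$ polynomial. In the complementary regime $r<k_1+k_2$, where the two cones may overlap, I would instead use the trivial bound $|\pi_\rho^{\mathcal H}(\Id_{A_1};\Id_{A_2})|\le\min(\pi_\rho^{\mathcal H}(A_1),\pi_\rho^{\mathcal H}(A_2))\le Ce^{-c\max(k_1,k_2)}$. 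Summing the first bound over all $k_1,k_2$ (the exponential weight making $\sum P(k_1,k_2)e^{-c(k_1+k_2)}$ finite) contributes $O(1/r^2)$, while summing the second over $k_1+k_2>r$ contributes only $O(e^{-c'r})$; altogether
\[
\big|\pi_\rho^{\mathcal H}(\widehat V(e_1);\widehat V(e_2))\big|\le\frac{C}{1+r^2}.
\]

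Finally I would carry out the summation over pairs. For fixed $e_1\in\Lambda_L^i$, the number of $e_2\in\Lambda_L^i$ with $\mathrm{dist}(e_1,e_2)$ of order $j$ is $O(j)$, whence
\[
\sum_{e_2\in\Lambda_L^i}\frac{C}{1+\mathrm{dist}(e_1,e_2)^2}\le C\sum_{j=1}^{O(L)}\frac{j}{1+j^2}\le C\log L;
\]
multiplying by the $O(L^2)$ choices of $e_1$ and adding the $O(L^2)$ diagonal yields the claimed $CL^2\log L$. I expect the decisive and most delicate step to be the uniform kernel decay $|\overline K^{-1}(w,b)|\le C/(1+|w-b|)$ for \emph{all} $\rho\in P^{\mathcal H}$, up to the boundary of the triangle: it is precisely this uniformity that removes the technical restriction on $\rho$ present in \cite{Ton15}. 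A secondary difficulty, also requiring care, is to make the ``at least two crossing factors'' bound on the truncated determinant rigorous with the correct dependence on $k_1,k_2$, so that the over-counting polynomial $P$ is genuinely tamed by the exponential tails rather than spoiling the $1/r^2$ decay.
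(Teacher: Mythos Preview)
Your approach is genuinely different from the paper's, and it has a real gap at the step you yourself flag as ``secondary''.

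\medskip
\textbf{What the paper does instead.} The paper never tries to bound the individual covariances $\pi_\rho^{\mathcal H}(\Id_{A_1};\Id_{A_2})$. It first replaces $\sum_e\widehat V(e)$ by $\sum_e\widetilde V(e)$ up to boundary terms $U,D$ that are $O(L^2)$ by the exponential bound \eqref{eq:disas4}. Then, for the main term, it uses the combinatorial identity of Proposition~\ref{prop:hex:infinite}: the whole series $\sum_{m_1,m_2\ge1}\pi_\rho^{\mathcal H}(\widetilde O_{m_1,e_1}\widetilde O_{m_2,e_2})$ is summed \emph{exactly} to $\big(\tfrac{a_2a_3}{a_1}\big)^2\det[\overline K^{-1}(\circ(x_i+1,n_i),\bullet(x_j,n_j))]_{i,j}$. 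Subtracting the product of means \eqref{eqSpeedKernel} cancels the diagonal of this $2\times2$ determinant, and the covariance $\pi_\rho^{\mathcal H}(\widetilde V(e_1);\widetilde V(e_2))$ is, up to an $O(e^{-c|n_1-n_2|})$ correction, \emph{equal} to a product of two off-diagonal entries $\overline K^{-1}(\cdot,\cdot)\overline K^{-1}(\cdot,\cdot)$ at distance $r$. The bound $1/r^2$ then comes for free from \eqref{eq:hex:integralbound}; no permutation expansion is needed.

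\medskip
\textbf{The gap in your argument.} Your claimed bound
\[
\big|\pi_\rho^{\mathcal H}(\Id_{A_1};\Id_{A_2})\big|\le C\,e^{-c(k_1+k_2)}\,\frac{P(k_1,k_2)}{1+r^2}
\]
does not follow from ``Hadamard for the remaining factors plus exponential smallness of the marginals''. The event $A_j=\{\widehat V(e_j)\ge k_j\}$ is the event that $2k_j$ prescribed edges are occupied; its probability is a $2k_j\times 2k_j$ determinant of entries of $\overline K^{-1}$ that are each of order $1$. The smallness $\pi_\rho^{\mathcal H}(A_j)\le Ce^{-ck_j}$ is a \emph{cancellation} in that determinant, not smallness of its entries. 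Once you expand the truncated determinant $\det M-\det M_{11}\det M_{22}$ into permutations that cross the partition and bound each monomial in absolute value, the cancellation is gone: you get at best $C^{k_1+k_2}/r^2$, which blows up exponentially in $k_1+k_2$ and cannot be summed. Reinstating the exponential decay would require keeping the sub-determinants intact, but then you cannot simultaneously extract the two crossing $1/r$ factors. This tension is exactly why the paper introduces the identity of Proposition~\ref{prop:hex:infinite}, which performs the $m_1,m_2$ sums \emph{before} any estimate and collapses everything to a single $2\times2$ determinant.

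\medskip
\textbf{A misdiagnosis.} The kernel decay $|\overline K^{-1}(w,b)|\le C(\rho)/(1+|w-b|)$ already holds for every fixed $\rho\in P^{\mathcal H}$ (the paper just quotes \cite{KOS03}); no uniformity ``up to the boundary'' is required, and this is not what removes the restriction from \cite{Ton15}. What removes it is the combinatorial identity that replaces the crossing-permutation estimate altogether.
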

The proof is given in Section~\ref{subsec:estimates}.
The advantage of the decomposition \eqref{eq4.2} is that in \eqref{eq:CSh}
terms with $e_1,e_2$ in neighboring columns, that would require a special treatment, do not appear.
In most figures of this section we find it convenient to deform the
hexagonal faces of $\mathcal H$ into rectangles, as in the drawing on
the right of Figure~\ref{fig:hex}, so that the axes $\hat e_1,\hat e_2$ become orthogonal.

Given the horizontal edge $e=(\bullet(x+1,n),\circ(x,n+1))$, define the edge set
\begin{equation} \label{eq:Ome}
O_{m,e}= \bigcup_{i=0}^{m-1}\{ (\bullet(x,n+i+1),\circ(x,n+1+i)),(\bullet(x+1,n+i),\circ(x+1,n+i+1)) \}
\end{equation}
and
\begin{equation}
\tilde{O}_{m,e}=
\bigcup_{i=1}^{m}\{(\bullet(x,n+1-i),\circ(x,n+1-i)),(\bullet(x+1,n-i),\circ(x+1,n+1-i))\}
\end{equation}
for $m\geq 1$. Using the notation $e+m=(\bullet(x+1,n+m),\circ(x,n+1+m))$, $m\in\Z$, we have $O_{m,e}=\widetilde O_{m,e+m}$. Also, we define
\begin{equation}\label{eq:exmBB}
\widetilde V(e)=\sum_{m\geq 1} \Id_{\widetilde O_{m,e}}.
\end{equation}

\subsection{Expressions for ${\pi_{\rho}^{\mathcal{H}}}(\widetilde V(e_1) ;\widetilde V(e_2))$}
We first determine a more explicit expression for $\widehat V(e)$, which was defined in Remark~\ref{rem:symm}. We
use the notation that $\Id_{O_{m,e}}$ means the indicator event of
dimers covering the edges $O_{m,e}$. Setting $B_{m,e}=O_{m,e} \cup \{ (\bullet(x+1,n+m),\circ(x,n+m+1))\}$, and by considering the possible
dimers incident to the vertex $\bullet(x+1,n+m)$, we have
\begin{equation}
	\Id_{O_{m,e}}= \Id_{B_{m,e}}+\Id_{O_{m+1,e}}.
\end{equation}
By definition of $\widehat V(e)$ and the above equation
\begin{equation}
 \label{eq:exm}
	\widehat V(e)=\sum_{m=1}^{\infty} m \Id_{B_{m,e}} = \sum_{m=1}^{\infty} m(\Id_{O_{m,e}}-\Id_{O_{m+1,e}})=\sum_{m=1}^{\infty} \Id_{O_{m,e}}.
\end{equation}

By linearity and translation invariance, ${\pi_{\rho}^{\mathcal{H}}}[\widehat V(e)] ={\pi_{\rho}^{\mathcal{H}}}[\widetilde V(e)]$. As shown in~\cite{CF15}, the expectation of $\widetilde V(e)$ can be written in
terms of a single entry of $\overline{K}^{-1}$, namely
\begin{equation}\label{eqSpeedKernel}
{\pi_{\rho}^{\mathcal{H}}}[\widetilde V(e)] = -\frac{a_2a_3}{a_1} \overline{K}^{-1} (\circ(x+1,n),\bullet (x,n)).
\end{equation}
Extending the ideas of~\cite{CF15}, in
Proposition~\ref{prop:hex:infinite} we derive a formula for a
$2\times 2$ determinant of $\overline{K}^{-1}$ in terms of
$\widetilde O_{m,e}$. This will be almost the same as
${\pi_{\rho}^{\mathcal{H}}}(\widetilde V(e_1) ;\widetilde
V(e_2))$. Then, in Section \ref{subsec:estimates}, we will express the
variance \eqref{eq:CSh} in terms of correlations
${\pi_{\rho}^{\mathcal{H}}}(\widetilde V(e_1) ;\widetilde V(e_2))$.

\begin{prop} \label{prop:hex:variance}
For $j\in\{1,2\}$  consider the horizontal edges $e_j=(\bullet(x_j+1,n_j),\circ(x_j,n_j+1))$  with $x_1,x_2,n_1,n_2 \in \mathbb{Z}$.\\
If $|x_1-x_2|>1$, then
\begin{equation}\label{eq:speedvariance1}
{\pi_{\rho}^{\mathcal{H}}}[\widetilde V(e_1)\widetilde V(e_2)] = \sum_{m_1,m_2=1}^{\infty} \pi_\rho^{\mathcal{H}} [\widetilde O_{m_1,e_1}\,\widetilde O_{m_2,e_2}].
\end{equation}
If $x_1=x_2$ and $n_1>n_2$, then
\begin{equation}\label{eq:speedvariance2}
{\pi_{\rho}^{\mathcal{H}}}[\widetilde V(e_1)\widetilde V(e_2)] = \sum_{m_1=1}^{|n_1-n_2|-1}\sum_{m_2=1}^\infty\pi_\rho^{\mathcal{H}} [\widetilde O_{m_1,e_1}\,\widetilde O_{m_2,e_2}]+\sum_{m=1}^{\infty} 2m\, \pi_\rho^{\mathcal{H}} [\widetilde O_{|n_1-n_2|+m,e_1}].
\end{equation}
The case $x_1=x_2$ and $n_2<n_1$ is obtained by symmetry. Finally, if $e_1=e_2$, then
\begin{equation}\label{eq:speedvariance3}
{\pi_{\rho}^{\mathcal{H}}}[\widetilde V(e_1)\widetilde V(e_2)] = \sum_{m=1}^{\infty}(2m-1) \pi_\rho^{\mathcal{H}} [\widetilde O_{m,e_1}].
\end{equation}
\end{prop}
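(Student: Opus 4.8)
The plan is to reduce everything to the single identity $\widetilde V(e)=\sum_{m\ge1}\Id_{\widetilde O_{m,e}}$ from \eqref{eq:exmBB} and then to understand, purely combinatorially, how two descending stacks $\widetilde O_{m_1,e_1}$ and $\widetilde O_{m_2,e_2}$ overlap. Expanding the product and using linearity gives
\[
\pi_{\rho}^{\mathcal H}[\widetilde V(e_1)\widetilde V(e_2)]=\sum_{m_1,m_2\ge1}\pi_{\rho}^{\mathcal H}\big[\widetilde O_{m_1,e_1}\,\widetilde O_{m_2,e_2}\big],
\]
where, as a product of indicators, the summand is the probability that every edge of $\widetilde O_{m_1,e_1}\cup\widetilde O_{m_2,e_2}$ is occupied. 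The first thing I would check is that this union is always a legal partial matching: each $\widetilde O_{m,e}$ consists of the two edge types $(\bullet(x,k),\circ(x,k))$ on column $x$ and $(\bullet(x+1,k),\circ(x+1,k+1))$ on column $x+1$, and two such edges of the same type share a vertex only if they coincide, while edges on different columns use disjoint vertices. Hence overlapping edges are genuinely identical and no conflict can arise; the only question is whether the union collapses to a single translated stack $\widetilde O_{M,e_1}$.

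For $|x_1-x_2|>1$ the two stacks live on disjoint pairs of columns, so the union never collapses and the displayed double sum is already \eqref{eq:speedvariance1}; nothing further is needed. For $e_1=e_2$ the two stacks are nested, $\widetilde O_{m_1,e_1}\cup\widetilde O_{m_2,e_1}=\widetilde O_{\max(m_1,m_2),e_1}$, so the summand equals $\pi_{\rho}^{\mathcal H}[\widetilde O_{\max(m_1,m_2),e_1}]$; collecting terms by the value $M=\max(m_1,m_2)$ and using that exactly $2M-1$ pairs $(m_1,m_2)\in\{1,2,\dots\}^2$ realize a given $M$ yields \eqref{eq:speedvariance3}.

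The substantive case, and the main obstacle, is $x_1=x_2$, $n_1>n_2$. Writing $d=n_1-n_2\ge1$, the key geometric lemma I must establish by tracking the coordinates in the definition of $\widetilde O_{m,e}$ is: the stack from $e_1$ first reaches the level of $e_2$ at $i=d$, so for $m_1\le d-1$ the two stacks are separated by a gap (the white vertex $\circ(x,n_2+1)$ sits on neither stack) and stay disjoint, whereas for $m_1\ge d$ the lower part of $\widetilde O_{m_1,e_1}$ matches the top of $\widetilde O_{m_2,e_2}$ edge-for-edge and the union is the single contiguous stack $\widetilde O_{\max(m_1,\,d+m_2),e_1}$. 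Checking contiguity at the junction—that the ranges of occupied levels on both columns $x$ and $x+1$ abut without a gap precisely when $m_1\ge d$—is the delicate point. Granting this, the $m_1\le d-1$ terms give verbatim the first double sum in \eqref{eq:speedvariance2}, while for the $m_1\ge d$ terms I set $m_1=d+a$, $a\ge0$, so the summand is $\pi_{\rho}^{\mathcal H}[\widetilde O_{d+\max(a,m_2),e_1}]$; counting that exactly $2M$ pairs $(a,m_2)$ with $a\ge0,\,m_2\ge1$ satisfy $\max(a,m_2)=M$ produces the second sum $\sum_{m\ge1}2m\,\pi_{\rho}^{\mathcal H}[\widetilde O_{d+m,e_1}]$. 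The case $n_2>n_1$ then follows by the same argument with the roles of $e_1,e_2$ exchanged.
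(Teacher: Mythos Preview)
Your argument is correct and follows essentially the same route as the paper's own proof: expand $\widetilde V(e_1)\widetilde V(e_2)$ via \eqref{eq:exmBB}, use the nesting identity $\Id_{\widetilde O_{m_1,e_1}}\Id_{\widetilde O_{m_2,e_2}}=\Id_{\widetilde O_{\max(m_1,\,d+m_2),e_1}}$ once $m_1\ge d$, and count pairs realizing a given maximum. The paper states the key union identity without the coordinate check you outline, but otherwise the steps coincide.
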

Convergence of the sums is shown later.

\begin{proof}
The statement for $|x_1-x_2|>1$ simply follows from \eqref{eq:exmBB}.

For $e_1=e_2$, using $\Id_{\widetilde O_{m_1,e_1}}\Id_{\widetilde O_{m_2,e_1}}=\Id_{\widetilde O_{\max\{m_1,m_2\},e_1}}$, we get
\begin{equation}
\widetilde V(e_1)\widetilde V(e_2)  = \sum_{m=1}^\infty \Id_{\widetilde O_{m,e_1}}+2\sum_{m_2=2}^\infty\sum_{m_1=1}^{m_2-1} \Id_{\widetilde O_{m_2,e_1}} = \sum_{m=1}^\infty (2m-1)\Id_{\widetilde O_{m,e_1}}.
\end{equation}

For $x_1=x_2$, we suppose that $n_1>n_2$. The result for $n_1<n_2$ is recovered by relabeling. We have
\begin{equation}\label{eq4.12}
\widetilde V(e_1)\widetilde V(e_2)=\sum_{m_1=1}^{|n_1-n_2|-1}\sum_{m_2=1}^\infty \Id_{\widetilde O_{m_1,e_1}} \Id_{\widetilde O_{m_2,e_2}} + \sum_{m_1=|n_1-n_2|}^{\infty}\sum_{m_2=1}^\infty \Id_{\widetilde O_{m_1,e_1}} \Id_{\widetilde O_{m_2,e_2}}.
\end{equation}
For the last term, the two $\widetilde O$ join so that $\Id_{\widetilde O_{m_1,e_1}} \Id_{\widetilde O_{m_2,e_2}} = \Id_{\widetilde O_{\max\{m_1,m_2+|n_1-n_2|\},e_1}}$. Using this, the second term in (\ref{eq4.12}) becomes
\begin{equation}
\sum_{m=1}^\infty 2m \Id_{\widetilde O_{m+|n_1-n_2|,e_1}}.
\end{equation}
Taking expectations with respect to ${\pi_{\rho}^{\mathcal{H}}}$ finishes the proof.
\end{proof}

\subsection{Expressions involving $\overline{K}^{-1}$}\label{subsectK1}
Recall that $\overline{K}^{-1}$ represents the inverse Kasteleyn matrix on $\mathcal{H}$ whose entries are given by~\eqref{hexagonK-1}.

\begin{prop}\label{prop:hex:infinite}
  Let $e_i=(\bullet(x_i+1,n_i),\circ(x_i,n_i+1))$ for  $i=1,2$.
  Then,
\begin{equation}
\begin{split}\label{eq:covarianceK:inf}
&\frac{(a_2a_3)^2}{a_1^2}\det \left( \overline{K}^{-1}(\circ(x_i+1,n_i),\bullet(x_j,n_j)) \right)_{1 \leq i,j \leq 2} \\
&= \left\{ \begin{array}{ll}
\displaystyle	\sum_{m_1=1}^{\infty} \sum_{m_2=1}^{\infty} \pi_\rho^{\mathcal{H}} [ \tilde{O}_{m_1,e_1} \tilde{O}_{m_2,e_2}] &\mbox{if $|x_1-x_2|>1$},\\
	\displaystyle \sum_{m_1=1}^{|n_1-n_2|-1} \sum_{m_2=1}^{\infty} \pi_\rho^{\mathcal{H}}[ \tilde{O}_{m_1,e_1} \tilde{O}_{m_2,e_2}]
+	\displaystyle\sum_{m=1}^{\infty}\pi_\rho^{\mathcal{H}}[\tilde{O}_{m+|n_1-n_2|,e_1}] & \mbox{if $x_1=x_2$, $n_1>n_2$}.
\end{array}\right.
\end{split}
\end{equation}
The case $x_1=x_2,n_2>n_1$ can be obtained by symmetry; for $x_1=x_2,n_1=n_2$ the determinant is zero.
\end{prop}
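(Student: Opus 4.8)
The plan is to reduce the $2\times2$ determinant of $\overline K^{-1}$ to a four-monomer correlation and then to match that correlation with the stated sums of staircase probabilities by a telescoping argument that mirrors, and doubles, the one-edge computation of~\cite{CF15} recorded in~\eqref{eqSpeedKernel}. Write $w_i=\circ(x_i+1,n_i)$ and $b_i=\bullet(x_i,n_i)$, so that the $(i,j)$ entry of the matrix in~\eqref{eq:covarianceK:inf} is $\overline K^{-1}(w_i,b_j)$. I would work on a large finite honeycomb patch $G$ carrying the weights $a_1,a_2,a_3$ of~\eqref{eq:KH} and pass to the infinite volume only at the end, exactly as in the proof of Proposition~\ref{prop:squareinfinite}: local dimer statistics and the relevant entries of $K_G^{-1}$ converge to $\pi_\rho^{\mathcal H}$ and to $\overline K^{-1}$, and the errors produced by truncating the staircases at some finite height are controlled by a bound of the type of Lemma~\ref{lem:RlGsquare}. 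On $G$, Jacobi's complementary-minor identity together with Kasteleyn's theorem gives
\[
\det\bigl(K_G^{-1}(w_i,b_j)\bigr)_{1\le i,j\le 2}=\pm\,\frac{\det\bigl(K_G|_{\{b_1,b_2,w_1,w_2\}}\bigr)}{\det K_G}=\pm\,\frac{Z_G[\{w_1,w_2,b_1,b_2\}]}{Z_G},
\]
so the left-hand side of~\eqref{eq:covarianceK:inf} is, up to the explicit prefactor $(a_2a_3/a_1)^2$ and a sign fixed by the Kasteleyn orientation, the probability that the four vertices $w_1,w_2,b_1,b_2$ are simultaneously left uncovered.

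Next I would identify this four-monomer correlation with the staircase sums. The single-staircase identity~\eqref{eqSpeedKernel} already states that $\overline K^{-1}(w_i,b_i)=-\tfrac{a_1}{a_2a_3}\sum_{m\ge1}\pi_\rho^{\mathcal H}[\widetilde O_{m,e_i}]$, the factor $a_2a_3/a_1$ being the net weight removed at each downward step of the staircase $\widetilde O_{m,e_i}$. I would run this telescoping jointly on the two staircases: peeling off the dimer at the bottom vertex of $\widetilde O_{m_i,e_i}$ produces, as in~\eqref{eq:exm}, the two-choice recursion $\Id_{O_{m_i,e_i}}=\Id_{B_{m_i,e_i}}+\Id_{O_{m_i+1,e_i}}$, and summing the resulting telescoping series in both $m_1$ and $m_2$ converts the pair of staircases into the four free monomers at $w_1,w_2,b_1,b_2$ while accumulating the factor $(a_2a_3/a_1)^2$. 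When $|x_1-x_2|>1$ the two staircases occupy disjoint pairs of columns, so the two telescopings never interfere; this yields precisely the unrestricted double sum in the first line of~\eqref{eq:covarianceK:inf}, combined with the Jacobi identity above to produce the determinant.

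The delicate case, and the one I expect to be the main obstacle, is $x_1=x_2$, where $e_1$ and $e_2$ share the same pair of columns. Here the lower staircase runs into the upper one once $m_1$ reaches $|n_1-n_2|$: for $m_1<|n_1-n_2|$ the two staircases are still disjoint and contribute the restricted double sum $\sum_{m_1=1}^{|n_1-n_2|-1}\sum_{m_2\ge1}$, while for $m_1\ge|n_1-n_2|$ the forced edge sets merge, $\widetilde O_{m_1,e_1}\,\widetilde O_{m_2,e_2}=\widetilde O_{\max\{m_1,\,m_2+|n_1-n_2|\},e_1}$, exactly as in the proof of Proposition~\ref{prop:hex:variance}. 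What requires genuine care is the coefficient in the merged region. In the symmetric second-moment computation~\eqref{eq:speedvariance2} the two orderings of $(m_1,m_2)$ add and yield the coefficient $2m$ in front of $\pi_\rho^{\mathcal H}[\widetilde O_{m+|n_1-n_2|,e_1}]$, whereas here the determinant is antisymmetric under exchanging its two rows, so the exchanged contributions cancel against the diagonal ones and only the coefficient $1$ survives; this is also the reason the determinant vanishes when $e_1=e_2$.

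To handle this sign-sensitive collapse I would perform the telescoping directly on the signed quantity $\det(K_G^{-1}(w_i,b_j))$ rather than on the manifestly positive partition functions, so that the antisymmetry is built in from the outset and the passage from the coefficient $2m$ of~\eqref{eq:speedvariance2} to the coefficient $1$ of~\eqref{eq:covarianceK:inf} is automatic. Tracking exactly which merged events acquire which sign as $m_1$ crosses $|n_1-n_2|$ is the real work of the proof; everything else is a doubling of the one-edge argument. The convergence of all the series involved, needed to legitimize the infinite-volume limit, follows from the decay of $\pi_\rho^{\mathcal H}[\widetilde O_{m,e}]$ and is deferred as announced after the statement.
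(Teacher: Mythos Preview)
Your overall strategy—identify the $2\times 2$ determinant with a four-monomer correlation via Jacobi's identity and then telescope that correlation into staircase probabilities on a finite graph before passing to the limit—is exactly the paper's approach (Proposition~\ref{prop:hex:finite} followed by the limiting argument using Petrov's convergence for the $L\times L\times L$ hexagon). Two points deserve correction.

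First, the paper takes all edge weights on the finite graph equal to $1$, not $a_1,a_2,a_3$. The prefactor $(a_2a_3/a_1)^2$ does not come from ``net weight removed at each step'' of the recursion; it appears only at the end, from the gauge factor relating the weight-$1$ finite inverse Kasteleyn matrix to $\overline K^{-1}$ in the infinite-volume limit (the honeycomb analogue of~\eqref{eq:gdn}). Carrying the $a_i$ through the recursion would work but is needlessly heavier.

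Second, and more substantively, your account of the same-column case is off. There is no sign-tracking and no antisymmetry to exploit: the four-monomer ratio $Z_H[\{w_1,b_1,w_2,b_2\}]/Z_H$ is a single nonnegative quantity (the Kasteleyn orientation is preserved because each pair $w_i,b_i$ shares a face), and the recursion on it is purely combinatorial. One telescopes the upper staircase downward; after $|n_1-n_2|-1$ steps its removed vertex set abuts the lower monomer pair, and at the next step the vertex $\circ(x,n_2+1)$ has only \emph{one} admissible dimer because its other neighbours have already been deleted. That forced dimer merges the two removed sets into a single long $\Sigma$, and the recursion then continues on that merged staircase, producing each term exactly once—hence coefficient~$1$. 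The contrast with the coefficient $2m$ in~\eqref{eq:speedvariance2} is not a cancellation: there one is summing the double series $\sum_{m_1,m_2}\Id_{\widetilde O_{m_1,e_1}}\Id_{\widetilde O_{m_2,e_2}}$ and counting how many pairs $(m_1,m_2)$ collapse onto the same merged event, whereas here one is recursively decomposing a single partition function. Working ``on the signed determinant'' buys nothing, since that determinant and the partition-function ratio coincide up to a fixed sign.
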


To prove Proposition~\ref{prop:hex:infinite} we first obtain a similar expression of a finite sub-graph $H=(V_H,E_H)$ of the honeycomb grid which admits a dimer covering, where however the sums needs to have a cut-off and a remainder. After taking the limit $H\to {\cal H}$ and removing the cut-off, one recovers Proposition~\ref{prop:hex:infinite}. Edge weights on $H$ are chosen to be identically $1$.

For the statement on $H$ we define the following subsets of vertices: for
$m\geq 1$
\begin{equation}
\check{\Sigma}_{m,x,n}=\bigcup_{i=0}^{m-1}
	\{ \bullet(x,n-i),\circ(x,n-i), \bullet(x+1,n-i-1),\circ(x+1,n-i) \}
\end{equation}
with $\check{\Sigma}_{0,x,n}=\emptyset$,
and for
$m\geq 0$
\begin{equation}
  \Sigma_{m,x,n}=\check{\Sigma}_{m,x,n}\cup\{\bullet(x,n-m),\circ(x+1,n-m)
  \}.
\end{equation}
Recall the notation $Z_G[V,E]$ from Definition \ref{def:ZEV}.
\begin{prop}\label{prop:hex:finite}
  Let $e_1,e_2$ be as in Proposition~\ref{prop:hex:infinite} and let
  $N_1,N_2$ be positive integers. Assume that the graph $H$ includes all
  vertices and edges appearing in the expressions below.  We have that
  $\det \left[(K_H)^{-1}(\circ(x_i+1,n_i),\bullet(x_j,n_j)) \right]_{1
    \leq i,j \leq 2}$ equals
\begin{equation}\label{eq:covarianceK1}
\sum_{m_1=1}^{N_1} \sum_{m_2=1}^{N_2} \Pb_H [ \tilde{O}_{m_1,e_1} \tilde{O}_{m_2,e_2}] +R^H_1
\end{equation}
if $|x_1-x_2|>1$ and
\begin{equation}\label{eq:covarianceK2}
\sum_{m_1=1}^{|n_1-n_2|-1} \sum_{m_2=1}^{N_2}\Pb_H [ \tilde{O}_{m_1,e_1} \tilde{O}_{m_2,e_2}] +\sum_{m_2=1}^{N_2}\Pb_H[\tilde{O}_{m_2+|n_1-n_2|,e_1}] + R_0^H
\end{equation}
if $x_1=x_2$ and $n_1>n_2$. The case $x_1=x_2$ and $n_2>n_1$ can be obtained by symmetry.

The remainder terms $R_i^H=R^H_i(x_1,n_1,N_1,x_2,n_2,N_2)$ for $i\in \{0,1\}$ are given by:
\begin{equation}
\begin{split}
&Z_H R^H_1 (x_1,n_1,N_1,x_2,n_2,N_2) = Z_H[\Sigma_{N_1,x_1,n_1} \cup \Sigma_{N_2,x_2,n_2}]\\&+
\sum_{m_2=1}^{N_2}Z_H [\Sigma_{N_1,x_1,n_1} \cup \check{\Sigma}_{m_2,x_2,n_2}] +
\sum_{m_1=1}^{N_1}Z_H[ \check{\Sigma}_{m_1,x_1,n_1} \cup {\Sigma}_{N_2,x_2,n_2}],
\end{split}
\end{equation}
and
\begin{multline}
  Z_H R^H_0(x,n_1,N_1,x,n_2,N_2) = \sum_{m_1=1}^{|n_1-n_2|-1} Z_H[\check{\Sigma}_{m_1,x,n_1}\cup \Sigma_{N_2,x,n_2}]\\
  +  Z_H[\Sigma_{|n_1-n_2|+N_2,x,n_1}].
\end{multline}
\end{prop}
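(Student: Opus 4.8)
The plan is to adapt the finite-graph arguments used for $\mathbb Z^2$ (Lemmas~\ref{lem:finitegraphlemma} and~\ref{lem:monomerformula}) to the honeycomb lattice and to the two-point setting, by combining a complementary-minor (Jacobi) identity for $K_H$ with a telescoping recursion of the type~\eqref{eq:rec1}--\eqref{eq:rec2}. Since all edge weights on $H$ equal $1$, no $a_i$-factors enter at this stage: those, together with the prefactor $(a_2a_3/a_1)^2$, are reinstated via a gauge transformation only when passing to the limit $H\to\mathcal H$ in the proof of Proposition~\ref{prop:hex:infinite}. Throughout I write $w_i=\circ(x_i+1,n_i)$ and $b_j=\bullet(x_j,n_j)$.

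The first step is to express the $2\times2$ minor of $(K_H)^{-1}$ on rows $\{w_1,w_2\}$ and columns $\{b_1,b_2\}$ through a partition function. By Jacobi's complementary-minor identity applied to the invertible matrix $K_H$,
\[
\det\left[(K_H)^{-1}(w_i,b_j)\right]_{1\le i,j\le2}=\pm\frac{\det\left(K_H|_{\{w_1,w_2,b_1,b_2\}}\right)}{\det K_H},
\]
with $K_H|_{\{\dots\}}$ as in Definition~\ref{def:kgv}. By Kasteleyn's theorem~\cite{Kas61} the numerator equals $\pm Z_H[\{w_1,w_2,b_1,b_2\}]$, \emph{provided} $K_H|_{\{\dots\}}$ is a valid Kasteleyn matrix for $H\setminus\{w_1,w_2,b_1,b_2\}$. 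As in the square case, the removals create defect faces around which \eqref{Phif} fails, and I would repair the orientation by reversing a short path of edges joining the defects created near each $e_i$ -- precisely the mechanism producing $\tilde K_G$ in~\eqref{eq:Ktilde}. The single removed pair is treated in~\cite{CF15} en route to~\eqref{eqSpeedKernel}; here the correction is performed near each of $e_1,e_2$.

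The second step expands $Z_H[\{w_1,w_2,b_1,b_2\}]$ by peeling one horizontal layer at a time below each $e_i$. Considering the dimers that can cover the vertex just below a given $e_i$, one branch forces the full layer of two edges and advances the stack from $\widetilde O_{m,e_i}$ to $\widetilde O_{m+1,e_i}$, while the complementary branch terminates it, producing $\Pb_H[\widetilde O_{m,e_i}]$ after division by $Z_H$; this is the honeycomb analogue of~\eqref{eq:rec1}--\eqref{eq:rec2}, with $\Sigma_{m,x,n}$ and $\check\Sigma_{m,x,n}$ playing the roles of $\Sigma_k$ and $\tilde\Sigma_k$. Iterating up to the cut-offs $N_1,N_2$ yields the displayed finite sums, the untelescoped tails being exactly the partition-function remainders $R_1^H$ and $R_0^H$. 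The two cases are distinguished by whether the stacks can collide: if $|x_1-x_2|>1$ they occupy disjoint pairs of columns and share no vertex, so the recursion factorizes into independent telescopings in $m_1$ and $m_2$, giving~\eqref{eq:covarianceK1} and the three boundary terms of $R_1^H$; if $x_1=x_2$ and $n_1>n_2$, then once the upper stack descends past $e_2$, i.e. for $m_1\ge|n_1-n_2|$, the two fuse into a single stack rooted at $e_1$ via $\Id_{\widetilde O_{m_1,e_1}}\Id_{\widetilde O_{m_2,e_2}}=\Id_{\widetilde O_{\max\{m_1,\,m_2+|n_1-n_2|\},e_1}}$, exactly as in Proposition~\ref{prop:hex:variance}, collapsing the double sum to $\sum_{m_2}\Pb_H[\widetilde O_{m_2+|n_1-n_2|,e_1}]$ and producing the extra term $Z_H[\Sigma_{|n_1-n_2|+N_2,x,n_1}]$ in $R_0^H$, which is~\eqref{eq:covarianceK2}. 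The case $n_2>n_1$ follows by relabeling, and $n_1=n_2$ makes two rows coincide so that the determinant vanishes.

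The main obstacle I expect is the orientation/sign control in the first step: the four removed vertices are not jointly on a single face, so one must verify that the chosen edge-reversals turn $K_H|_{\{\dots\}}$ into a genuine Kasteleyn matrix and pin down the global sign, so that the determinant equals $+Z_H[\{w_1,w_2,b_1,b_2\}]/Z_H$ rather than its negative -- consistent with the positivity of the right-hand sides. The same-column geometry is degenerate, and the collision bookkeeping, matching precisely the remainder $Z_H[\Sigma_{|n_1-n_2|+N_2,x,n_1}]$, also needs care; the far-apart case is comparatively routine once the determinant identity is secured.
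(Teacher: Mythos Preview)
Your overall strategy matches the paper's: express the $2\times2$ minor of $K_H^{-1}$ as a ratio of partition functions, then telescope by peeling off layers one at a time, with the two cases distinguished by whether the growing stacks share columns. The recursion step and the collision bookkeeping in the same-column case are exactly what the paper does.

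However, you overcomplicate the first step, and the ``main obstacle'' you anticipate is not actually present. Unlike the square-lattice situation of Lemma~\ref{lem:monomerformula}, here each removed pair $\Sigma_{0,x_i,n_i}=\{\bullet(x_i,n_i),\circ(x_i+1,n_i)\}$ consists of two vertices lying on the \emph{same} hexagonal face. Removing such a pair does not create any defect in the Kasteleyn orientation: the restricted matrix $K_H|_{\{w_1,w_2,b_1,b_2\}}$ is already a valid Kasteleyn matrix for the punctured graph, with no edge reversals needed. (Equivalently, as the paper puts it, one may add an auxiliary edge across the face from $\circ(x_i+1,n_i)$ to $\bullet(x_i,n_i)$ and force a dimer on it.) This gives directly
\[
\frac{Z_H[\Sigma_{0,x_1,n_1}\cup\Sigma_{0,x_2,n_2}]}{Z_H}
=\det\left[K_H^{-1}(\circ(x_i+1,n_i),\bullet(x_j,n_j))\right]_{1\le i,j\le2},
\]
with the sign determined and no $\tilde K$-type correction. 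The contrast with $\mathbb Z^2$ is precisely that there $\mathbf w=\circ(-1,1)$ and $\mathbf b=\bullet(1,0)$ are \emph{not} on a common face, which is what forced the edge-reversal trick in~\eqref{eq:Ktilde}; in the hexagonal setting the geometry of $\Sigma_{0,x,n}$ is chosen so that this issue disappears. Once you observe this, the rest of your sketch goes through as written.
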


\begin{proof}
  Consider the graph
  $H\backslash( \Sigma_{0,x_1,n_1} \cup \Sigma_{0,x_2,n_2})$. The
  vertices removed  from $H$ are on the same face (for each pair). This
  means that the Kasteleyn orientation of
  $H\backslash ( \Sigma_{0,x_1,n_1}\cup \Sigma_{0,x_2,n_2})$ is the
  same as that of $H$ (up to the removed vertices and their incident
  edges). An equivalent viewpoint is adding auxiliary edges between
  $\circ(x_i+1,n_i)$ and $\bullet(x_i,n_i)$ which must be covered by
  dimers for $i\in\{1,2\}$, and each auxiliary edge having orientation
  from $\circ(x_i+1,n_i)$ to $\bullet(x_i,n_i)$ for $i\in
  \{1,2\}$. This means that
  $Z_H[
  \Sigma_{0,x_1,n_1}]/{Z_H}=-(K_H)^{-1}(\circ(x_1+1,n_1),\bullet(x_1,n_1))$
  and
\begin{equation}\label{eq:hex:Kinv}
	\frac{ Z_H[ \Sigma_{0,x_1,n_1} \cup \Sigma_{0,x_2,n_2}]}{Z_H} =\det \left[ K_H^{-1}(\circ (x_i+1,n_i),\bullet (x_j,n_j)) \right]_{1\leq i,j \leq 2}.
\end{equation}
\begin{figure}
\begin{center}
\includegraphics[height=6cm]{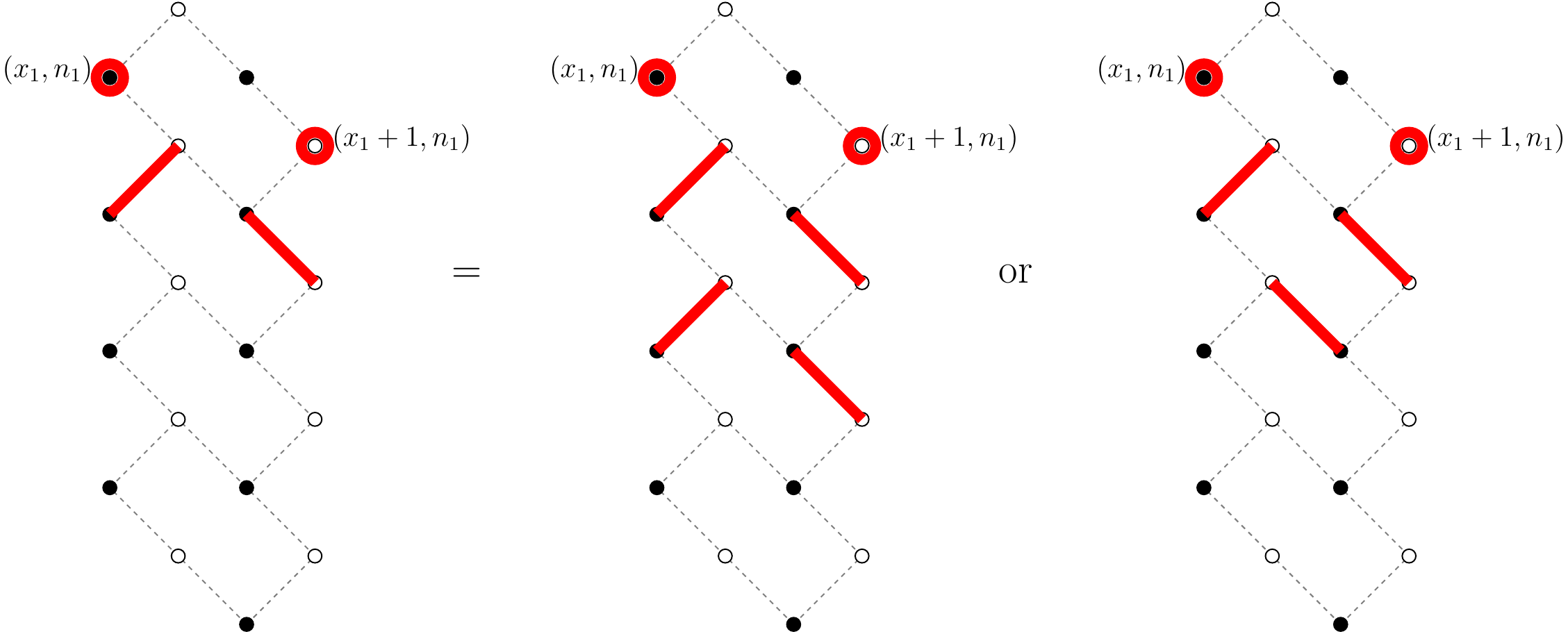}
\caption{ Vertices incident to red circles or red edges are those that are removed from the graph. The removed vertices on the left side are given by $\Sigma_{1,x_1,n_1}$. By considering the dimers incident to $\circ(x_1,n_1-1)$, this gives $\Sigma_{2,x_1,n_1}$ or $\check{\Sigma}_{2,x_1,n_1}$.  } \label{fig:hex1}
\end{center}
\end{figure}
\emph{Case 1: $|x_1-x_2|>1$.} We manipulate the dimer possibilities on $H\backslash (\Sigma_{0,x_1,n_1} \cup \Sigma_{0,x_2,n_2})$. Consider $H\backslash (\Sigma_{m_1,x_1,n_1} \cup \Sigma_{0,x_2,n_2})$ and the possible dimers incident to $\bullet (x_1,n-m_1)$; this leads to
\begin{equation}\label{eq:hex:iterate1}
  Z_H[ \Sigma_{m_1,x_1,n_1} \cup \Sigma_{0,x_2,n_2}] =
  Z_H[ \Sigma_{m_1+1,x_1,n_1} \cup \Sigma_{0,x_2,n_2}]+
  Z_H[ \check{\Sigma}_{m_1+1,x_1,n_1} \cup \Sigma_{0,x_2,n_2}],
\end{equation}
noting that $\Sigma_{m_1,x_1,n_1}\cup\{ \bullet(x_1+1,n_1-m_1-1),\circ(x_1,n_1-m_1)\}$ is the same as $\check{\Sigma}_{m_1+1,x_1,n_1}$; see Figure~\ref{fig:hex1}. Iterating~\eqref{eq:hex:iterate1} gives
\begin{equation}
          Z_H[ \Sigma_{0,x_1,n_1} \cup \Sigma_{0,x_2,n_2}]=
          Z_H[ \Sigma_{N_1,x_1,n_1} \cup \Sigma_{0,x_2,n_2}]+
          \sum_{m_1=1}^{N_1} Z_H[\check{\Sigma}_{m_1,x_1,n_1} \cup \Sigma_{0,x_2,n_2}].
\end{equation}
Since the set $\Sigma_{m_2,x_2,n_2}$ for $0\leq m_2 \leq N_2$ does
not intersect $\Sigma_{m_1,x_1,n_1}$ for $0\leq m_1 \leq N_1$,
applying an analogous procedure given above to
$Z_H[ \Sigma_{N_1,x_1,n_1} \cup \Sigma_{0,x_2,n_2}]$ and
$Z_H[ \check{\Sigma}_{m_1,x_1,n_1} \cup \Sigma_{0,x_2,n_2}]$ gives
\begin{equation}
Z_H[ \Sigma_{0,x_1,n_1} \cup \Sigma_{0,x_2,n_2}]=R^H_1Z_H+\sum_{m_1=1}^{N_1} \sum_{m_2=1}^{N_2}
Z_H[ \check{\Sigma}_{m_1,x_1,n_1} \cup \check{\Sigma}_{m_2,x_2,n_2}],
\end{equation}
where $R^H_1$ is as given in the statement of the proposition. Dividing both sides of the above equation by $Z_H$, noting~\eqref{eq:hex:Kinv} and
\begin{equation}
  \label{en2}
\frac{Z_H[ \check{\Sigma}_{m_1,x_1,n_1}\cup \check{\Sigma}_{m_2,x_2,n_2}] }{Z_H} =
\Pb_H[\tilde{O}_{m_1,e_1} \tilde{O}_{m_2,e_2}]
\end{equation}
gives~\eqref{eq:covarianceK1}.

\begin{figure}
		\begin{center}
		\includegraphics[height=6cm]{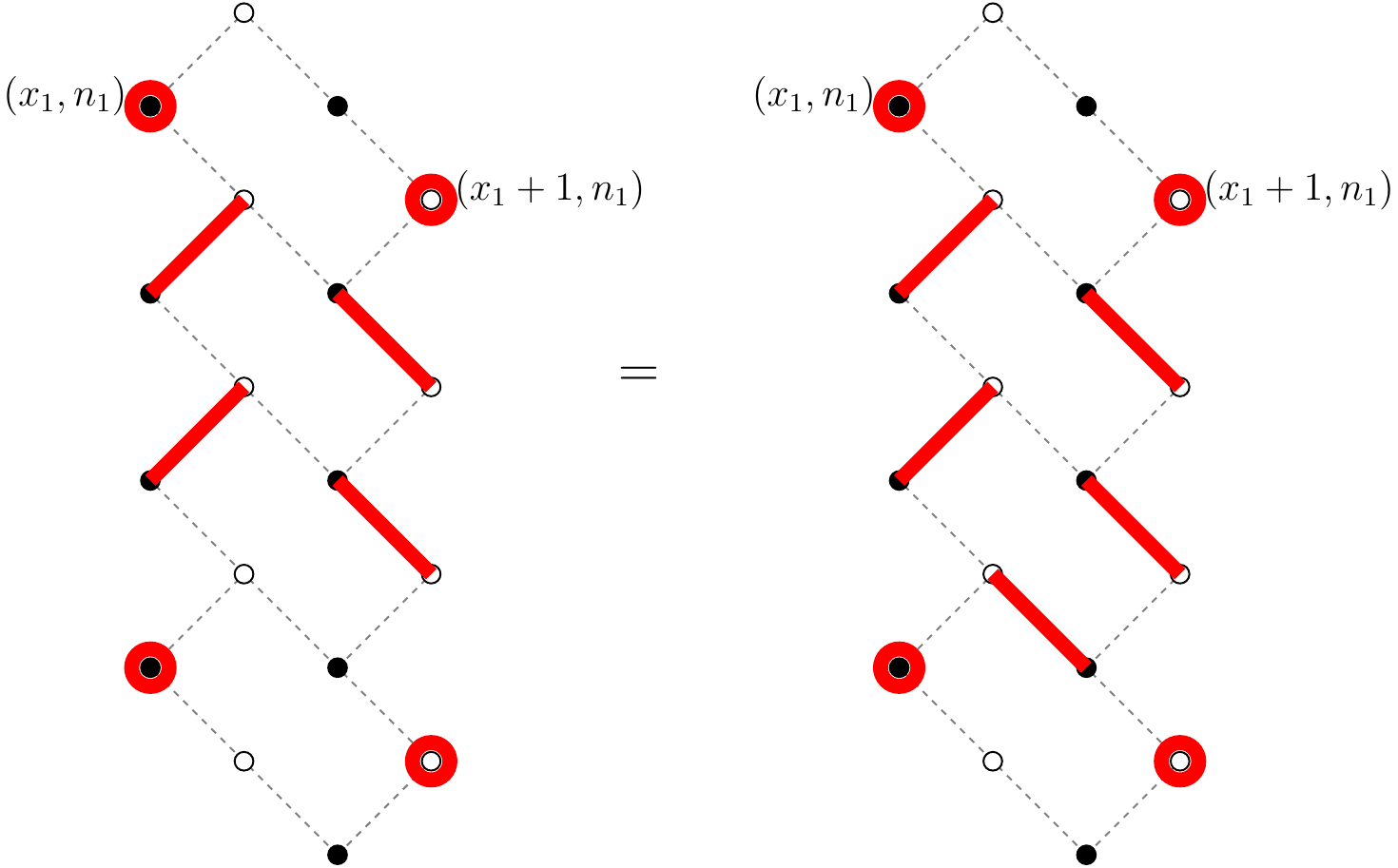}
			\caption{Using the same conventions as in Figure~\ref{fig:hex1}, the left side shows $\Sigma_{2,x_1,n_1} \cup \Sigma_{0,x_2,n_2}$ removed from the graph with $x_1-x_2=0$ and $n_1=n_2+3$. The right side shows the same configuration but with the edge that is forced to be covered by a dimer. }
		\end{center}
	\end{figure}
        \emph{Case 2: $x_1=x_2\equiv x$.} We give the computation for
        $n_1>n_2$ because the computation is similar for $n_1<n_2$. By
        noting that there is a single choice of dimer incident to the
        vertex $\circ(x,n_2+1)$ on the graph
        $H\backslash (\Sigma_{|n_1-n_2|-1,x,n_1} \cup
        \Sigma_{m_2,x,n_2})$ which is given by the edge
	$(\circ(x,n_2+1),\bullet(x+1,n_2))$, that $\Sigma_{|n_1-n_2|-1,x,n_1} \cup \{ \circ(x,n_2+1),\bullet(x+1,n_2)\}=\check{\Sigma}_{|n_1-n_2|,x,n_1}$,
	and that
        $\Sigma_{|n_1-n_2|,x,n_1} \cup
        \Sigma_{m_2,x,n_2}=\Sigma_{|n_1-n_2|+m_2,x,n_1}$, then by
        following the steps given for $|x_1-x_2|>1$, we have
	\begin{equation}
		\begin{split}
                  Z_H[\Sigma_{0,x,n_1}\cup \Sigma_{0,x,n_2}] &= \sum_{m_1=1}^{|n_1-n_2|-1}
                  Z_H[ \check{\Sigma}_{m_1,x,n_1} \cup\Sigma_{N_2,x,n_2}]
		+	\sum_{m_2=1}^{N_2}Z_H[\check{\Sigma}_{|n_1-n_2|+m_2,x,n_1}] \\
			&+Z_H[\Sigma_{|n_1-n_2|+N_2,x,n_1}]+
		\sum_{m_1=1}^{|n_1-n_2|-1} \sum_{m_2=1}^{N_2}
                  Z_H[\check{\Sigma}_{m_1,x,n_1} \cup \check{\Sigma}_{m_2,x,n_2}].                                     \end{split}
              \end{equation}
Dividing both sides of the above equation by $Z_H$, noting~\eqref{eq:hex:Kinv} and
\begin{equation}
	\frac{Z_H[\check{\Sigma}_{|n_1-n_2|+m_2,x,n_1}]}{Z_H} = \Pb_H[ \tilde{O}_{m_2+|n_1-n_2|,e_1}]
\end{equation}
leads to~\eqref{eq:covarianceK2}.
\end{proof}

\begin{proof}[Proof of Proposition~\ref{prop:hex:infinite}]
  The first step is to provide bounds for $R^H_0$ and $R^H_1$ in terms
  of probabilities. This is achieved by using the same argument given
  in Lemma~\ref{lem:RlGsquare} to each of the terms found in these
  expressions. That is, we have that
  \begin{equation}
    \label{en}
  Z_H [\Sigma_{N_1,x_1,n_1}\cup V] \leq Z_H [ \check{\Sigma}_{N_1, x_1,n_1}\cup V],
\end{equation}
 where $V$ denotes a set of removed vertices
which are not incident to the edges incident to $\Sigma_{N_1,x_1,n_1}$.  To
verify this equation, recall that
$\Sigma_{N_1,x_1,n_1}= \check{\Sigma}_{N_1, x_1,n_1} \cup \{
\bullet(x_1,n_1-N_1),\circ(x_1+1,n_1-N_1)\}$. Since the two
additional vertices are on the same face, the Kasteleyn orientation on
the graphs $H\backslash( \Sigma_{N_1,x_1,n_1}\cup V)$ and
$H\backslash (\check{\Sigma}_{N_1, x_1,n_1}\cup V)$ are the same up to these
two additional vertices, which means that
	\begin{equation}
		\frac{Z_H [\Sigma_{N_1,x_1,n_1}\cup V] }{Z_H [ \check{\Sigma}_{N_1, x_1,n_1}\cup V]} = \frac{ \det (K_{H\backslash( \check{\Sigma}_{N_1, x_1,n_1}\cup V)}|_{ \bullet(x_1,n_1-N_1),\circ(x_1+1,n_1-N_1)} )}{ \det (K_{H\backslash (\check{\Sigma}_{N_1, x_1,n_1}\cup V)})} \leq 1
	\end{equation}
	because each term in the expansion of the determinant in the
        numerator is also present in the denominator.
        We show how to use the above inequalities to bound each of the
        terms in $R^H_0$ and $R^H_1$ by using the term
        $Z_H [\Sigma_{N_1,x_1,n_1}\cup \Sigma_{N_2,x_2,n_2}]/Z_H$ as
        an example. The rest of the terms follow by similar
        computations.  From \eqref{en} and \eqref{en2} we obtain
	\begin{equation}\label{eq:hex:bound1}	
          \frac{Z_H [\Sigma_{N_1,x_1,n_1}\cup \Sigma_{N_2,x_2,n_2}]}{Z_H} \leq
          \Pb_H[\tilde{O}_{N_1,e_1}\tilde{O}_{N_2,e_2}].
\end{equation}
Using this and analogous bounds we can estimate the error terms
$R^H_0,R^H_1$ as
\begin{equation} \label{eq:hex:boundrem1}
0\leq 	{R^H_1} \leq \Pb_H[\tilde{O}_{N_1,e_1} \tilde{O}_{N_2,e_2}]+ \sum_{m_2=1}^{N_2} \Pb_H[ \tilde{O}_{N_1,e_1}\tilde{O}_{m_2,e_2}]+ \sum_{m_1=1}^{N_1} \Pb_H[ \tilde{O}_{m_1,e_1}\tilde{O}_{N_2,e_2}]
\end{equation}
and
\begin{equation} \label{Rzero}
0\leq {R^H_0} \leq \sum_{m_1=1}^{|n_1-n_2|-1} \Pb_H[\tilde{O}_{m_1,e_1} \tilde{O}_{N_2,e_2}]+\Pb_H[\tilde{O}_{|n_1-n_2|+N_2,e_1}].
\end{equation}

\begin{figure}
\begin{center}
\includegraphics[height=6cm]{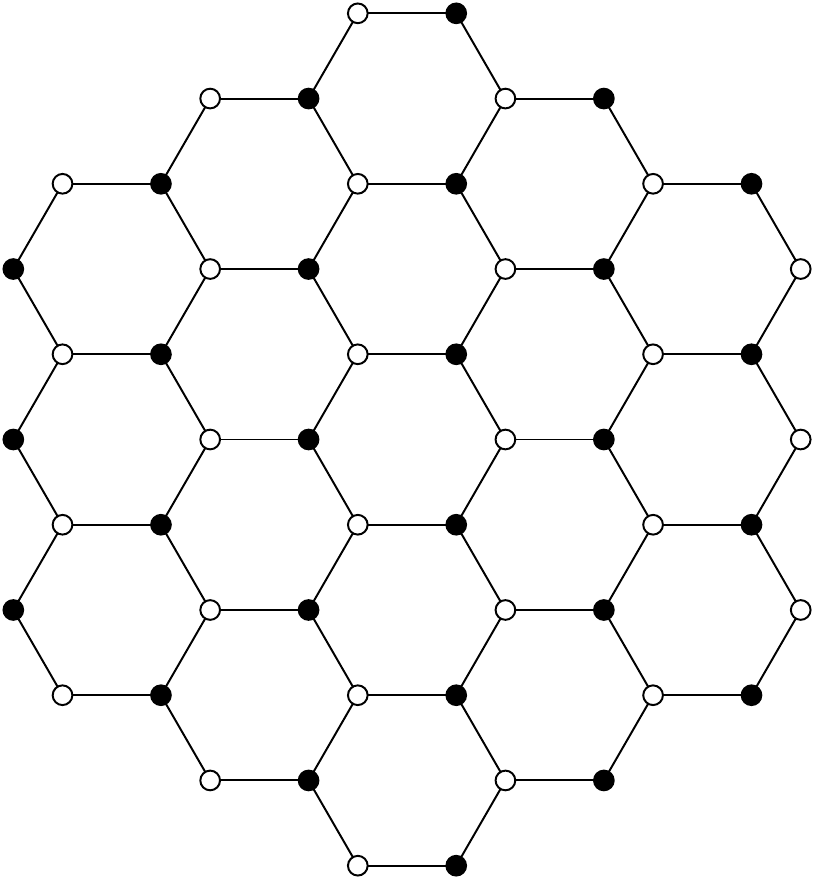}
	\caption{The hexagonal graph $H_L$ for $L=3$.}
\label{fig:honeycomb}
\end{center}
\end{figure}

For the moment we did not specify the finite graph $H$: now we take it to be
a $L\times L\times L$ hexagonal subset $H=H_L$ of $\mathcal H$ (see Figure \ref{fig:honeycomb} for $L=3$). Non-zero entries $K_{H_L}(b,w)$ of its Kasteleyn matrix are chosen to be all equal to $1$.  We will let $L$ grow to infinity
and, in a second stage, we will let $N_1,N_2$ in Proposition \ref{prop:hex:finite} tend to infinity.

Similar to uniform random tilings of Aztec diamonds, uniform
dimer coverings of a large hexagon exhibit a limit shape
phenomenon~\cite{CLP98}, that we briefly recall. Rescale $H_L$ by a
factor $1/L$ so that it converges to a hexagon $H_\infty$ of side
length $1$ as $L\to\infty$ and let $\mathcal D\subset H_\infty$ be the
open disk tangent to the six sides of $H_\infty$.  Let
$\xi \in \mathcal D$ and let $\hat H_L$ be the graph
$H_L$ translated by $-\lfloor \xi L\rfloor$. Then, Theorem 2 in
\cite{Pet14} says that the local statistics under $\Pb_{\hat H_L}$
converges to that of a Gibbs measure $\pi^{\mathcal H}_{\hat \rho}$, for a
certain (rather explicit) $\hat\rho=\hat\rho(\xi_1,\xi_2)\in P^{\mathcal H}$. Moreover,
Proposition~7.10 in \cite{Pet14} implies that the inverse Kasteleyn
matrix $K^{-1}_{\hat H_L}$ satisfies, for any fixed vertices $\circ(x_1,x_2),\bullet (y_1,y_2)$,
\begin{multline}
  \label{eq:Petrov}
  \lim_{L\to\infty}K^{-1}_{\hat H_L}(\circ(x_1,x_2),\bullet (y_1,y_2))\\=a_2 \left(\frac{a_1}{a_3}
  \right)^{y_1-x_1} \left( \frac{a_2}{a_3}\right)^{y_2-x_2}
  \overline{K}^{-1}(\circ(x_1,x_2),\bullet (y_1,y_2)).
\end{multline}
This is the analog of \eqref{eq:gdn} for the square lattice.
Here, $\overline{K}^{-1}$ is the infinite inverse Kasteleyn matrix \eqref{hexagonK-1}, with $a_i=a_i(\hat \rho)$ (recall that edge weights are a function of the slope, as discussed just before \eqref{hexagonK-1}).
As in \eqref{eq:gdn}, the exponential pre-factors in the r.h.s. arise from the gauge transformation relating the
Kasteleyn matrix $K_{\hat H_L}$, with weights $1$, to that of the infinite lattice $\mathcal H$, with weights $a_1,a_2,a_3$.
Moreover, for every $\rho\in P^{\mathcal H}$ it is possible to find $\xi\in\mathcal D$ such that $\hat\rho(\xi)=\rho$, and this is how we fix $\xi$.

Note that Petrov's results hold for more general regions than
hexagonal ones, but we do not need this level of generality
here.

We let $L\to\infty$ now. From \eqref{eq:Petrov} we see that
\begin{multline}
  \lim_{L\to \infty}
 \det \left( {K}^{-1}_{\hat H_L}(\circ(x_i+1,n_i),\bullet(x_j,n_j)) \right)_{1 \leq i,j \leq 2} \\=\frac{(a_2a_3)^2}{a_1^2}\det \left( \overline{K}^{-1}(\circ(x_i+1,n_i),\bullet(x_j,n_j)) \right)_{1 \leq i,j \leq 2}.
\end{multline}
Also, from the above discussion we see that all the $\Pb_{\hat H_L}$ probabilities in \eqref{eq:covarianceK1} and
\eqref{eq:covarianceK2} tend to the corresponding $\pi^{\mathcal H}_\rho$ probabilities, while (recall \eqref{Rzero})
\begin{equation}
0\leq \liminf_{L\to \infty} R^{\hat H_L}_0 \leq  \limsup_{L\to \infty} R^{\hat H_L}_0 \leq \sum_{m_1=1}^{|n_1-n_2|-1} \pi_{\rho}^{\mathcal{H}}[\tilde{O}_{m_1,e_1} \tilde{O}_{N_2,e_2}]+\pi^{\mathcal H}_\rho(\tilde{O}_{|n_1-n_2|+N_2,e_1}),
\end{equation}
and a similar bound for $ R^{\hat H_L}_1$ from~\eqref{eq:hex:boundrem1}.

Thus we have shown that
$\frac{(a_2a_3)^2}{a_1^2} \det \left(
  (K^{-1}(\circ(x_i+1,n_i),\bullet(x_j,n_j)) \right)_{1 \leq i,j \leq
  2}$, which is independent of $N_1$ and $N_2$, it is a sum of
non-negative terms. This implies that the sums are convergent and
consequently the remainder terms tends to zero as $N_1$ and $N_2$ tend
to infinity (this can be deduced also from Eq. \eqref{eq:disas4}
below). The statement of Proposition \ref{prop:hex:infinite} follows.
\end{proof}

\begin{rem}
  The above proof uses a hexagonal finite graph $H_L$ and the convergence
  of its inverse Kasteleyn matrix to that of $\mathcal{H}$. The
  approach to compute the speed of growth used in the published
  version of~\cite{CF15}(Proposition 3.7) uses instead a
  toroidal graph $T_L$ with $L\to\infty$ but it contains mistakes
  since it does not take into account the fact that on the torus the
  terms in the expansion of the determinant of $K_{T_L}$ come with
  different signs. Although that argument could be adjusted, it is
  much simpler to use the planar hexagonal graph $H_L$ instead, as we do
  here.
\end{rem}

\subsection{Proof of
  Proposition~\ref{prop:evenoddest}} \label{subsec:estimates} The two
variance terms on the right side of (\ref{eq4.2}) are bounded in the
same way. We therefore presents the details for only one.  Recall that
$\Lambda_L^0$ consists of all horizontal edges
$e=(\bullet(x+1,n),\circ(x,n+1))$ with $x,n \in [0,L]$ and even $x$.
For several bounds we will use (this can be deduced from Lemma A.1
of~\cite{Ton15})
\begin{equation}\label{eq:disas4}
\pi_\rho^{\mathcal{H}} (O_{m,e})=\pi_\rho^{\mathcal{H}} (\widetilde O_{m,e})\le C_1 e^{-c_1 m}
\end{equation}
where $C_1,c>0$ are constants depending on $\rho$.

Recall that $\widehat V(e)=\sum_{m\geq 1} \Id_{O_{m,e}}$ and $\widetilde V(e)=\sum_{m\geq 1} \Id_{\widetilde O_{m,e}}$. Also, we define
\begin{equation}
  D(e)=\sum_{\substack{m\geq 1:\\ \widetilde O_{m,e}\not\subset\Lambda_L^0}}\Id_{\widetilde O_{m,e}},\quad U(e)=\sum_{\substack{m\geq 1:\\ O_{m,e}\not\subset\Lambda_L^0}}\Id_{O_{m,e}}.
\end{equation}
Recall that $O_{m_i,e_i}=\widetilde O_{m_i,e_i+m_i}$ and notice the
following bijection: for $(m,e)$ such that $O_{m,e}\subset \Lambda_L^0$,
there exists a unique pair $(\tilde m,\tilde e)$ such that
$O_{m,e}=\widetilde O_{\tilde m,\tilde e}$, namely $\tilde m=m$ and
$\tilde e=e+m$ (and vice versa). This gives
\begin{equation}
\sum_{e\in\Lambda_L^0}\widehat V(e)=\sum_{e\in\Lambda_L^0} \widetilde V(e)+\sum_{e\in\Lambda_L^0} U(e)-\sum_{e\in\Lambda_L^0} D(e).
\end{equation}
By Cauchy-Schwarz, it is enough to bound the variances of the three sums above. Let us first bound the variance of the sum of $U(e)$. Bounding the variance by the second moment we get
\begin{equation}\label{eq4.40}
  \mathrm{Var}_{\pi_{\rho}^{\mathcal{H}}}\bigg(\sum_{e \in \Lambda_L^0}U(e)\bigg)
  \leq \sum_{e_1,e_2\in \Lambda_L^0} \sum_{\substack{m_1,m_2\geq 1:\\ O_{m_1,e_1}\not\subset\Lambda_L^0\\O_{m_2,e_2}\not\subset\Lambda_L^0}}{\pi_{\rho}^{\mathcal{H}}}(O_{m_1,e_1} O_{m_2,e_2}).
\end{equation}
Using (\ref{eq:disas4}), $\pi_{\rho}^{\mathcal{H}}(O_{m_1,e_1} O_{m_2,e_2})\leq \min\{\pi_{\rho}^{\mathcal{H}}(O_{m_1,e_1}),\pi_{\rho}^{\mathcal{H}}(O_{m_2,e_2})\}$ and $\min\{e^{-c x},e^{-c y}\}\leq e^{-c (x+y)/2}$ for $x,y\geq 0$, we get
\begin{equation}
(\ref{eq4.40})\leq \bigg(C_1 \sum_{e\in\Lambda_L^0}\sum_{\substack{m\geq 1:\\ O_{m,e}\not\in\Lambda_L^0}} e^{-c_1 m/2}\bigg)^2= \Or(L^2).
\end{equation}
Similarly one gets the bound for the variance of $\sum_{e\in\Lambda_L^0} D(e)$.

For the main term, we have
\begin{equation}\label{eq4.38}
\mathrm{Var}_{\pi_{\rho}^{\mathcal{H}}}\bigg(\sum_{e \in \Lambda_L^0}\widetilde V(e)\bigg) =
\sum_{e_1,e_2 \in \Lambda_L^0}\left({\pi_{\rho}^{\mathcal{H}}}[\widetilde V(e_1) \widetilde V(e_2)]-{\pi_{\rho}^{\mathcal{H}}}[\widetilde V(e_1)]{\pi_{\rho}^{\mathcal{H}}}[\widetilde V(e_2)]\right).
\end{equation}
Using (\ref{eqSpeedKernel}) we have
\begin{equation}\label{eq4.34}
\sum_{e_1,e_2 \in \Lambda_L^0}{\pi_{\rho}^{\mathcal{H}}}[\widetilde V(e_1)]{\pi_{\rho}^{\mathcal{H}}}[\widetilde V(e_2)] =\sum_{e_1,e_2 \in \Lambda_L ^0}\left(\frac{a_2a_3}{a_1} \right)^2 \prod_{i=1}^2\overline{K}^{-1} (\circ(x_i+1,n_i),\bullet (x_{i},n_{i})).
\end{equation}
Propositions~\ref{prop:hex:variance} and~\ref{prop:hex:infinite} imply that
\begin{multline}\label{eq4.35}
\sum_{e_1,e_2 \in \Lambda_L^0}{\pi_{\rho}^{\mathcal{H}}}[\widetilde V(e_1) \widetilde V(e_2)] = E_\Lambda\\+
\sum_{e_1,e_2 \in \Lambda_L^0} \left(\frac{a_2a_3}{a_1} \right)^2 \det\left[ \overline{K}^{-1} (\circ(x_i+1,n_i),\bullet (x_j,n_j)) \right]_{1 \leq i,j\leq 2}
\end{multline}
where the error term $E_\Lambda$ is given by
\begin{equation}
E_\Lambda = \sum_{\substack{e_1,e_2 \in \Lambda_L^0;\\ x_1=x_2}}\sum_{m\geq 1}(2m-1){\pi_{\rho}^{\mathcal{H}}}[\widetilde O_{m+|n_2-n_1|,e_1\vee e_2}]
\end{equation}
and $e_1\vee e_2=e_1\Id_{n_1>n_2}+e_2\Id_{n_1<n_2}$. We recall that
$e_j=(\bullet(x_j+1,n_j),\circ(x_j,n_j+1))$ as in Proposition \ref{prop:hex:variance}.
Plugging (\ref{eq4.34}) and (\ref{eq4.35}) into (\ref{eq4.38}) leads to
\begin{equation}\label{eq4.36}
\mathrm{Var}_{\pi_{\rho}^{\mathcal{H}}}\bigg(\sum_{e \in \Lambda_L^0}\widetilde V(e)\bigg) = E_\Lambda + \sum_{e_1,e_2 \in \Lambda_L^0}\left(\frac{a_2a_3}{a_1} \right)^2 \prod_{i=1}^2\overline{K}^{-1} (\circ(x_i+1,n_i),\bullet (x_{i+1},n_{i+1})),
\end{equation}
with $x_3\equiv x_1$ and $n_3\equiv n_1$. Thus it remains to bound the
two terms in (\ref{eq4.36}).

The leading term is bounded as follows. We have
\begin{equation} \label{eq:hex:integralbound}
\left| \overline{K}^{-1}(\circ(x_i+1,n_i),\bullet(x_{i+1},n_{i+1})) \right|
\leq \frac{C}{1+|n_1-n_2|+|x_1-x_2|},
\end{equation}
where $C=C(\rho)$. The above bound follows from the computations given in the proof of Lemma~4.4 in~\cite{KOS03}. We omit details. Thus we have
\begin{equation}
\begin{aligned}
&\sum_{e_1,e_2 \in \Lambda_L^0} \bigg|\prod_{i=1}^2 \overline{K}^{-1}(\circ(x_i+1,n_i),\bullet(x_{i+1},n_{i+1})) \bigg| \\
\leq &\sum_{x_1,n_1,x_2,n_2\in[0,L]}\frac{C'}{1+|n_1-n_2|^2+|x_1-x_2|^2} \leq C'' L^2\log{L}
\end{aligned}
\end{equation}
as wished.
Finally, using (\ref{eq:disas4}), $E_\Lambda$ is bounded by
\begin{equation}
  C_1
\sum_{\substack{e_1,e_2 \in \Lambda_L^0;\\ x_1=x_2}}\sum_{m\geq 1} (2m-1)e^{-c_1 (m+|n_1-n_2|)}=\Or(L^2).
\end{equation}
This completes the proof of Proposition~\ref{prop:evenoddest}.

\appendix

\section{Formulas on $\mathbb{Z}^2$}\label{App:Positivity}

In this section, we give formulas useful for the inverse Kasteleyn matrix on $\mathbb{Z}^2$.

\begin{lem}\label{appendixlemma}
	For $\mathcal{G}=\mathbb{Z}^2$ and provided that $|\tanh B_2 \cosh B_1 |\leq 1$ then
	\begin{equation} \label{eq:KfullZ2}
\overline{K}^{-1} (\circ (x_1,x_2), \bullet (y_1,y_2)) =\frac{f(x_1,x_2,y_1,y_2)}{2\pi \mathrm{i}} \int_C \frac{z^{y_1-x_1-1}(z-1)^{y_2-x_2}}{(z+1)^{y_2-x_2+1}} d z
\end{equation}
	where $f(x_1,x_2,y_1,y_2)= \mathrm{i}^{y_1-x_1+x_2-y_2-1}e^{B_2(y_2-x_2)}e^{B_1(y_1-x_1-1)}$ and $C$ is a contour from $\overline{\Omega}_c$ to $\Omega_c=e^{-B_1}e^{\mathrm{i} \arccos( \cosh(B_1) \tanh(B_2))}$ passing to the right of the origin if $y_2 \geq x_2$ and passing to the left of the origin if $y_2<x_2$.
\end{lem}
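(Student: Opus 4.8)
The plan is to reduce the double contour integral \eqref{eq:wholeplaneK} to a single one by integrating out $z_2$ exactly, which is possible because $\mu$ is affine in $z_2^{-1}$. Abbreviating $j=y_1-x_1$, $k=y_2-x_2$, I would group \eqref{mu2} as $\mu(z_1,z_2)=\alpha+\beta z_2^{-1}$ with $\alpha=\alpha(z_1)=z_1+\I e^{B_1}$ and $\beta=\beta(z_1)=e^{B_2}(\I z_1+e^{B_1})$, so that the inner integral is $\frac{1}{2\pi\I}\oint_{|z_2|=1}\frac{z_2^{k-1}}{\alpha+\beta z_2^{-1}}\,dz_2$. As a function of $z_2$ this has a single pole away from the origin at $\zeta(z_1):=-\beta(z_1)/\alpha(z_1)$. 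Expanding $1/\mu$ as a geometric series on the unit circle (which is legitimate precisely when $|\zeta|\neq 1$) and integrating term by term, I expect the inner integral to equal $\zeta^{k}/\alpha$ when $k\ge 0$ and $|\zeta|<1$, to equal $-\zeta^{k}/\alpha$ when $k<0$ and $|\zeta|>1$, and to vanish in the two remaining cases. Consequently only an \emph{arc} of the $z_1$-circle contributes, and which arc is selected depends on the sign of $k=y_2-x_2$.

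I would then substitute $z_1=\I e^{B_1}z$, so that $\frac{dz_1}{z_1}=\frac{dz}{z}$ and the anticlockwise circle $|z_1|=1$ becomes the anticlockwise circle $|z|=e^{-B_1}$. A direct computation gives $\alpha=\I e^{B_1}(z+1)$ and $\zeta=-\I e^{B_2}\frac{z-1}{z+1}$, whence the reduced integrand transforms as $\frac{dz_1}{z_1}\,z_1^{\,j}\,\frac{\zeta^{k}}{\alpha}=f(x_1,x_2,y_1,y_2)\,\frac{z^{y_1-x_1-1}(z-1)^{y_2-x_2}}{(z+1)^{y_2-x_2+1}}\,dz$, where the constant prefactor collapses to $\I^{\,j-k-1}e^{B_1(j-1)}e^{B_2 k}$ after collecting the powers of $\I$ and the exponentials of $B_1,B_2$; this equals $f(x_1,x_2,y_1,y_2)$. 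This is the routine algebraic part of the argument.

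It remains to identify the arc and orient it. Writing $z=e^{-B_1}e^{\I\phi}$ and $r=e^{-B_1}$, a short computation gives $|\zeta|^2=e^{2B_2}\,\frac{r^2-2r\cos\phi+1}{r^2+2r\cos\phi+1}$, and using $r^2+1=2r\cosh B_1$ and $\frac{1-e^{-2B_2}}{1+e^{-2B_2}}=\tanh B_2$ one finds that $|\zeta|=1$ is equivalent to $\cos\phi=\cosh B_1\tanh B_2$. Under the standing hypothesis $|\cosh B_1\tanh B_2|\le 1$ this has the two solutions $\phi=\pm\arccos(\cosh B_1\tanh B_2)$, i.e. $z=\Omega_c$ and $z=\overline{\Omega}_c$, and moreover $|\zeta|<1$ exactly on the arc $|\phi|<\arccos(\cosh B_1\tanh B_2)$, which passes through $z=e^{-B_1}>0$, to the right of the origin. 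Thus for $k=y_2-x_2\ge 0$ the contribution is the integral over this right-hand arc, traversed anticlockwise from $\overline{\Omega}_c$ to $\Omega_c$, as claimed. For $k<0$ the contribution instead comes from the complementary arc $|\zeta|>1$, which the anticlockwise orientation runs from $\Omega_c$ to $\overline{\Omega}_c$ through $z=-e^{-B_1}<0$; the extra minus sign coming from the $|\zeta|>1$ case reverses this into an integral from $\overline{\Omega}_c$ to $\Omega_c$ passing to the left of the origin, again as claimed.

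The main obstacle is exactly this last piece of bookkeeping: in the case $k<0$ one must track how the sign produced by the geometric-series computation combines with the orientation of the complementary arc to restore the canonical endpoints $\overline{\Omega}_c\to\Omega_c$, and one must check that the selected arc lies on the correct side of the singularities of the integrand at $z=0$ (from $z^{\,y_1-x_1-1}$) and at $z=1$ (from $(z-1)^{y_2-x_2}$ when $y_2<x_2$). Once the arc is identified, Cauchy's theorem lets one deform it to any contour in the same homotopy class relative to these singularities, yielding the general contour $C$ in the statement. The whole construction is consistent only because of the amoeba condition \eqref{eq:ameba}, $|\cosh B_1\tanh B_2|\le 1$, which ensures that the pole $\zeta(z_1)$ genuinely crosses the unit circle and hence that the endpoints $\Omega_c,\overline{\Omega}_c$ exist.
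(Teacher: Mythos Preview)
Your argument is correct. You integrate out $z_2$ directly by locating the single pole $\zeta(z_1)=-\beta/\alpha$ and using a geometric-series expansion to decide, according to the sign of $k=y_2-x_2$ and the position of $|\zeta|$ relative to $1$, which arc of the $z_1$-circle survives; the substitution $z_1=\I e^{B_1}z$ then gives the claimed integrand and the endpoints $\Omega_c,\overline{\Omega}_c$ follow from $|\zeta|=1\Leftrightarrow\cos\phi=\cosh B_1\tanh B_2$. The sign/orientation bookkeeping in the $k<0$ case that you flag is indeed the only delicate point, and you handle it correctly.

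The paper proceeds differently: after rescaling $z_i\mapsto e^{B_i}z_i$ it applies the M\"obius map $\omega=(\I-z_2)/(\I+z_2)$ (and $z_1=\I z$), which linearises $1/\mu$ into a Cauchy kernel $1/(\omega-z)$. The double integral becomes $\int dz\int d\omega\,\frac{(\omega-1)^{k}}{(\omega+1)^{k+1}}\frac{1}{\omega-z}$, and one then analyses the image of the $z_2$-circle under this map (three cases according to the sign of $B_2$) and chooses to close the $\omega$-contour around $+1$ or $-1$ so that only the residue at $\omega=z$ contributes. Your route bypasses the M\"obius transformation and the attendant case analysis of the transformed contour, trading it for the elementary observation that $\mu$ is affine in $z_2^{-1}$; in effect your condition $|\zeta(z)|\lessgtr 1$ is the direct translation of the paper's question ``on which side of the $\omega$-contour does $z$ lie''. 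The paper's approach makes the Cauchy-kernel structure explicit (which is conceptually pleasant and generalises well), while yours is shorter and more self-contained for this particular computation.
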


Note that the condition $ |\tanh B_2 \cosh B_1 |\leq 1$ is the one that defines the amoeba $\mathcal B$ in \eqref{eq:ameba}.

\begin{proof}[Proof of Lemma~\ref{appendixlemma}]
 In~\eqref{eq:wholeplaneK}, we make the change of
 variables $z_1 \mapsto z_1e^{B_1}$ and $z_2 \mapsto z_2e^{B_2}$
 which gives
	\begin{equation}
\overline{K}^{-1} (\circ (x_1,x_2), \bullet (y_1,y_2)) =
		\frac{ e^{B_1(y_1-x_1-1)+B_2(y_2-x_2)}}{(2\pi \mathrm{i})^2}\int \frac{d z_1}{z_1}\frac{d z_2}{z_2}
\frac{z_1^{y_1-x_1-1}z_2^{y_2-x_2}}{
 1+ \mathrm{i}z_1^{-1} +\mathrm{i}z_2^{-1}+z_1^{-1} z_2^{-1}}
\end{equation}
for $w=\circ(x_1,x_2),b=\bullet(y_1,y_2)$ where the integral are over
positive contours $|z_1|=e^{-B_1}$ and $|z_2|=e^{-B_2}$.

	We make the change of variables $\omega= (\mathrm{i}-z_2)/(\mathrm{i}+z_2)$ (i.e., $z_2=-\mathrm{i}(\omega-1)/(1+\omega)$) and $z_1=z\mathrm{i}$
	which gives
\begin{equation}
	\frac{f(x_1,x_2,y_1,y_2)}{(2 \pi \mathrm{i})^2} \int {dz} \int d\omega \frac{(\omega-1)^{y_2-x_2}}{(\omega+1)^{y_2-x_2+1}}\frac{1}{\omega-z},
\end{equation}
where the contour for $|z|=e^{-B_1}$ is positively oriented and the contour for $\omega$ is explained below. Taking the residue at $\omega=z$ gives the integral described in the lemma. It remains to find the contours under these transformations, ascertain that there are no other contributions, and finally verify the intersection points.

The map $\omega=(\mathrm{i}-z_2)/(\mathrm{i}+z_2)$ maps the positively oriented contour $|z_2|=e^{-B_2}$ to:
\begin{itemize}
\item[(a)] for $B_2>0$, to a positively oriented circle having positive real part, center on the real axis, and including $1$,
\item[(b)] for $B_2=0$, to the imaginary line from $\infty\mathrm{i}$ to $-\infty\mathrm{i}$,
\item[(c)] for $B_2<0$, to a negatively oriented circle having negative real part, center on the real axis, and including $-1$.
\end{itemize}

In particular, these contours intersect with $|z|=e^{-B_1}$ if and only if $|\tanh B_1 \cosh B_2 |\leq 1$. From now we consider this restriction of the values of $B_1,B_2$. The two intersections are complex conjugate complex numbers $\Omega_c$ and $\overline \Omega_c$, with the convention that $\Im(\Omega_c)\geq 0$. A simple geometric computation gives $\Omega_c=e^{-B_1} e^{\mathrm{i} \arccos (\cosh( B_1) \tanh (B_2))}$.

Notice that the possible poles in $\omega$ are $\omega=z$ and
$\omega=\pm 1$. The residue at infinity is zero and therefore, by Cauchy
residue's theorem, for any value of $B_2$ we can choose to perform the
integral over $\omega$:
\begin{itemize}
\item[(A)] either along a positively oriented path enclosing the poles at $1$ and at the portion of $z$ from $\overline\Omega_c$ to $\Omega_c$,
\item[(B)] or along a negatively oriented path enclosing the poles at
 $-1$ and at the portion of $z$ from $\Omega_c$ to
 $\overline\Omega_c$.
\end{itemize}

The idea is now to choose between option (A) and (B) for the contours
in such a way that the poles at $\pm 1$ are never inside the contour
for $\omega$, so that we are left (at most) with the pole at $z$
only.\smallskip

\emph{Case 1: $y_2-x_2\geq 0$}. In this case there is a pole at $-1$ and we choose the contour for $\omega$ as in (A). The residue at $\omega=z$ gives the claimed result.\smallskip

\emph{Case 2: $y_2-x_2<0$}. In this case there is a pole at $1$ and we choose the contour for $\omega$ as in (B). The residue at $\omega=z$ gives, due to the orientation of the contour, $-1$ times the integral over $z$ from $\Omega_c$ to $\overline\Omega_c$, which can be equivalently be though to be the integral over $z$ from $\overline\Omega_c$ to $\Omega_c$ passing to the left of the origin.
\end{proof}

	\begin{lem} \label{lem:Z2slopes}
For $\mathcal{G}=\mathbb{Z}^2$ and provided that $ |\tanh (B_2) \cosh (B_1) |\leq 1$
\begin{equation}
\rho_1=-\frac{1}{2} +\frac{1}{\pi}\arg \Omega_c
\end{equation}
and
\begin{equation}
\rho_2=\frac{\arg (\Omega_c -1)}{\pi }-\frac{\arg (\Omega_c +1)}{\pi }-\frac{1}{2}
\end{equation}
	where $\Omega_c$ is defined in Lemma~\ref{appendixlemma}.
\end{lem}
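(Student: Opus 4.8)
The plan is to prove the lemma by direct computation: I would evaluate the four entries of $\overline{K}^{-1}$ that enter the slope formulas \eqref{eq:Zslope1}--\eqref{eq:Zslope2} using the single-integral representation of Lemma~\ref{appendixlemma}, substitute them in, and reduce everything to elementary contour integrals of the type $\int_C dz/z$ and $\int_C dz/(z\pm1)$. First I would record, from Lemma~\ref{appendixlemma}, the prefactor $f$ and the rational integrand for each relevant pair. For $\rho_1$ the entries are $\overline K^{-1}(\circ(-1,1),\bullet(0,0))$ and $\overline K^{-1}(\circ(0,1),\bullet(0,0))$, with $f=\mathrm{i}e^{-B_2}$ and $f=e^{-B_1-B_2}$ and integrands $1/(z-1)$ and $1/(z(z-1))$, both along the contour $C_L$ passing to the \emph{left} of the origin (since $y_2<x_2$). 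For $\rho_2$ I additionally need $\overline K^{-1}(\circ(0,0),\bullet(0,0))$, with $f=-\mathrm{i}e^{-B_1}$ and integrand $1/(z(z+1))$, along the contour $C_R$ passing to the \emph{right} of the origin (since there $y_2=x_2$).

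Plugging these into \eqref{eq:Zslope1}--\eqref{eq:Zslope2}, the exponential prefactors cancel exactly against the $e^{B_i}$ factors multiplying each entry and the powers of $\mathrm{i}$ combine to constants, leaving $\rho_1=\tfrac12+\tfrac1{2\pi\mathrm{i}}\big(-\int_{C_L}\tfrac{dz}{z-1}+\int_{C_L}\tfrac{dz}{z(z-1)}\big)$ and an analogous expression for $\rho_2$. The next step uses partial fractions $1/(z(z-1))=1/(z-1)-1/z$ and $1/(z(z+1))=1/z-1/(z+1)$. For $\rho_1$ the two $1/(z-1)$ contributions cancel, leaving only $-\tfrac1{2\pi\mathrm{i}}\int_{C_L} dz/z$. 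For $\rho_2$ one is left with $\int_{C_R}dz/z-\int_{C_L}dz/z$ together with $-\int_{C_R}dz/(z+1)+\int_{C_L}dz/(z-1)$. Each elementary integral is then evaluated as a difference of logarithms between the endpoints $\overline\Omega_c$ and $\Omega_c$; since $|\Omega_c|=|\overline\Omega_c|=e^{-B_1}$ the $\log|\cdot|$ parts cancel, and by $\arg\overline\Omega_c=-\arg\Omega_c$ (and likewise for $\Omega_c\pm1$) the answers collapse to the two stated expressions. I would finish by noting that $\arg\Omega_c\in[0,\pi]$ and $\arg(\Omega_c\pm1)\in(0,\pi)$, so the formulas are unambiguous.

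The hard part will be the branch bookkeeping: the value of each $\int_C d\log(\cdot)$ depends on which side of the origin $C$ passes and on whether the shifted contour $z\mp1$ crosses the negative real axis. Concretely, the left contour forces $\int_{C_L}dz/z=\mathrm{i}(2\arg\Omega_c-2\pi)$ rather than $2\mathrm{i}\arg\Omega_c$, and likewise $\int_{C_L}dz/(z-1)=\mathrm{i}(2\arg(\Omega_c-1)-2\pi)$ because $z-1$ sweeps across the cut, whereas the right contour gives $\int_{C_R}dz/z=2\mathrm{i}\arg\Omega_c$ and $\int_{C_R}dz/(z+1)=2\mathrm{i}\arg(\Omega_c+1)$ with no extra winding. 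I expect all the care to be concentrated here: getting these $\pm2\pi$ windings right is exactly what makes the spurious $2\pi\mathrm{i}$ terms cancel in $\rho_2$ and produces the correct constant $-\tfrac12$, whereas a sign slip in any winding number would shift $\rho_2$ by an integer.
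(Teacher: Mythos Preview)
Your approach is exactly the paper's: plug the single-integral representation of Lemma~\ref{appendixlemma} into \eqref{eq:Zslope1}--\eqref{eq:Zslope2}, cancel the exponential prefactors, use partial fractions, and evaluate the elementary integrals $\int dz/z$ and $\int dz/(z\pm1)$ with careful bookkeeping of which side of $0$ the contour passes. The paper's own proof is actually terser than your sketch---it writes the combined integrand for $\rho_1$, collapses it to $\tfrac12-\tfrac1{2\pi}\int_{C_L}\tfrac{\mathrm i}{z}\,dz$ in one line, evaluates, and says the computation for $\rho_2$ is similar---so your emphasis on the winding-number accounting is, if anything, more explicit than what the paper provides.

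One computational slip to fix when you write it out: the overall sign in front of the bracket in your displayed formula for $\rho_1$ is wrong. From \eqref{eq:Zslope1} one gets
\[
\rho_1=\tfrac12-\tfrac1{2\pi\mathrm i}\Bigl(-\int_{C_L}\tfrac{dz}{z-1}+\int_{C_L}\tfrac{dz}{z(z-1)}\Bigr),
\]
with a minus rather than your plus. If you carry your plus through together with your (correct) evaluation $\int_{C_L}dz/z=\mathrm i(2\arg\Omega_c-2\pi)$, you end up with $\rho_1=\tfrac32-\tfrac1\pi\arg\Omega_c$ instead of $-\tfrac12+\tfrac1\pi\arg\Omega_c$. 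This is an isolated sign error and does not affect the strategy.
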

\begin{proof}
Using the integral formula for $\overline{K}^{-1}$ found in Lemma~\ref{appendixlemma}, we have from~\eqref{eq:Zslope1}
\begin{equation}\begin{split}
\rho_1 		&= \frac{1}{2} - \frac{1}{2\pi } \int \left(\frac{\mathrm{i} }{\omega-1} -\frac{\mathrm{i} }{(\omega-1)\omega}\right) d\omega \\
&=\frac{1}{2}-\frac{1}{2\pi } \int \frac{\mathrm{i}}{\omega}d\omega,
\end{split}
\end{equation}
where the integral goes between $\overline{\Omega}_c$ and $\Omega_c$ and
passes to the left of the origin. The formula for $\rho_1$ follows
from evaluating the above integral. The formula for $\rho_2$ follows
from a similar computation.
		
\end{proof}

\begin{lem}\label{lem:Z2entries}
 For $\mathcal{G}=\mathbb{Z}^2$ provided that
 $ |\tanh( B_2) \cosh( B_1)| \leq 1$ then
\begin{equation}
\overline{K}^{-1}(\circ(-1,1),\bullet(1,0))=\frac{e^{B_1-B_2} \left(-\arg \left(\Omega _c-1\right)-\Im\left(\Omega _c\right)+\pi \right)}{\pi },
\end{equation}
and
\begin{equation}
\overline{K}^{-1}(\circ(0,1),\bullet(1,0))=-\frac{\mathrm{i} e^{-B_2} \left(\pi -\arg \left(\Omega _c-1\right)\right)}{\pi }.
\end{equation}
\end{lem}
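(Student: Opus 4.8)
The plan is to apply Lemma~\ref{appendixlemma} directly to each of the two entries and then evaluate the resulting one–dimensional contour integral over $C$. For the first entry I set $(x_1,x_2,y_1,y_2)=(-1,1,1,0)$; then $y_1-x_1-1=1$ and $y_2-x_2=-1$, so the integrand in~\eqref{eq:KfullZ2} collapses to $z/(z-1)$ and the prefactor is $f=\mathrm{i}^{2}e^{-B_2}e^{B_1}=-e^{B_1-B_2}$. Since $y_2<x_2$, the contour $C$ runs from $\overline{\Omega}_c$ to $\Omega_c$ passing to the left of the origin. For the second entry I take $(x_1,x_2,y_1,y_2)=(0,1,1,0)$; now $y_1-x_1-1=0$ and $y_2-x_2=-1$, so the integrand is simply $1/(z-1)$, the prefactor is $f=\mathrm{i}\,e^{-B_2}$, and $C$ is the same left–of–origin contour.

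Thus both entries reduce to evaluating $\int_C \frac{dz}{z-1}$, and the first additionally requires $\int_C dz=\Omega_c-\overline{\Omega}_c=2\mathrm{i}\,\Im(\Omega_c)$ after writing $z/(z-1)=1+1/(z-1)$. The heart of the matter is the quantity $I:=\int_C \frac{dz}{z-1}$, which equals the increment of a continuous branch of $\log(z-1)$ along $C$. Here I would use two facts. First, $\overline{\Omega}_c-1=\overline{\Omega_c-1}$, so $|\overline{\Omega}_c-1|=|\Omega_c-1|$ and the real part of the log–increment vanishes, leaving $I=\mathrm{i}\,\Delta$ with $\Delta$ the total change of $\arg(z-1)$. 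Second, because $C$ passes to the \emph{left} of the origin it also passes to the left of $z=1$, so as $z$ moves from $\overline{\Omega}_c$ up to $\Omega_c$ the vector $z-1$ sweeps across the negative real axis. Tracking this continuously from $\arg(\overline{\Omega}_c-1)=-\arg(\Omega_c-1)$ to the upper–half–plane value $\arg(\Omega_c-1)$ forces the branch to cross $-\pi$, giving $\Delta=2\arg(\Omega_c-1)-2\pi$ and hence $I=-2\mathrm{i}\bigl(\pi-\arg(\Omega_c-1)\bigr)$.

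The main obstacle is precisely this branch bookkeeping: one must be certain that the left–of–origin prescription produces the $-2\pi$ monodromy contribution (rather than letting $z-1$ stay in the principal domain), since it is exactly this term that generates the $\pi-\arg(\Omega_c-1)$ structure appearing in both answers. I would therefore phrase the argument in terms of the continuous winding of $z-1$ around $0$ as $z$ traverses $C$, rather than blindly inserting principal values, and cross–check the orientation against the residue picture used in the proof of Lemma~\ref{appendixlemma}.

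Once $I$ is in hand the substitution is routine. For the first entry, $\frac{-e^{B_1-B_2}}{2\pi\mathrm{i}}\bigl(2\mathrm{i}\,\Im(\Omega_c)+I\bigr)=\frac{e^{B_1-B_2}}{\pi}\bigl(\pi-\Im(\Omega_c)-\arg(\Omega_c-1)\bigr)$, which is the claimed expression for $\overline{K}^{-1}(\circ(-1,1),\bullet(1,0))$. For the second entry, $\frac{\mathrm{i}\,e^{-B_2}}{2\pi\mathrm{i}}\,I=-\frac{\mathrm{i}\,e^{-B_2}}{\pi}\bigl(\pi-\arg(\Omega_c-1)\bigr)$, which is the claimed value of $\overline{K}^{-1}(\circ(0,1),\bullet(1,0))$. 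This completes the plan.
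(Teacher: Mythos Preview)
Your proposal is correct and follows exactly the approach the paper indicates: the paper's proof consists of the single sentence ``These follow from evaluating the appropriate single integral formulas from Lemma~\ref{appendixlemma},'' and you have simply supplied the details of that evaluation. Your branch bookkeeping for $I=\int_C\frac{dz}{z-1}=-2\mathrm{i}\bigl(\pi-\arg(\Omega_c-1)\bigr)$ is right (one can confirm it by closing $C$ with a path from $\Omega_c$ to $\overline{\Omega}_c$ to the right of $1$, yielding a clockwise loop around $1$), and the substitutions into the two entries are accurate.
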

\begin{proof}
These follow from evaluating the appropriate single integral formulas from Lemma~\ref{appendixlemma}.
\end{proof}

\begin{lem}\label{lem:positivity}
We have that
\begin{equation}
		\begin{split}
			&\frac1 {e^{B_1-B_2}}\left( \bar{K}^{-1}({\w},{\b}) + 2\frac{ \bar K({\b_1} ,\tilde{\w})}{ \bar K({\b_1}, {\w}) \bar K({\b} ,\tilde{\w})} \pi^{\mathbb Z^2}_\rho[\tilde{e}_0,e_1] \right) +\frac{\Im \Omega_c}{\pi}\\
			&=- \pi^{\mathbb Z^2}_\rho[\tilde{e}_0,e_1]+ \pi^{\mathbb Z^2}_\rho[\tilde{e}_0^c,e_1],
		\end{split}
\end{equation}
	\begin{equation} \label{eq:positivity}
		- \pi^{\mathbb Z^2}_\rho[\tilde{e}_0,e_1]+ \pi^{\mathbb Z^2}_\rho[\tilde{e}_0^c,e_1] -\frac{\Im \Omega_c}{\pi}<0,
	\end{equation}
and
	\begin{equation}
		\Im \Omega_c=\frac{ \sin ^2\psi _1}{ \tan \psi _2} +\frac{\sin \psi _1}{ \sin \psi _2 } \sqrt{ \sin ^2\psi _2+\sin ^2\psi _1 \cos ^2\psi _2}
	\end{equation}
	where $\Omega_c$ is defined in Lemma~\ref{appendixlemma}, $\psi_i=(1/2+\rho_i)\pi$ for $i\in\{1,2\}$.

\end{lem}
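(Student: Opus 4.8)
The three assertions will be established in the order: first the algebraic identity, then the closed form for $\Im\Omega_c$, and finally the strict inequality \eqref{eq:positivity}, which is where the real work lies. As a preliminary common to all three, I would read off from \eqref{eq:KZ} the Kasteleyn weights $\overline{K}(\b_1,\tilde{\w})=e^{B_1+B_2}$, $\overline{K}(\b_1,\w)=\mathrm{i}e^{B_2}$, $\overline{K}(\b,\tilde{\w})=\mathrm{i}e^{B_2}$, so that
\[
\frac{2\,\overline{K}(\b_1,\tilde{\w})}{\overline{K}(\b_1,\w)\,\overline{K}(\b,\tilde{\w})}=-2e^{B_1-B_2}.
\]
Hence the bracket in the statement is $\overline{K}^{-1}(\w,\b)-2e^{B_1-B_2}\pi^{\mathbb Z^2}_\rho[\tilde{e}_0,e_1]$, and after multiplication by $e^{B_2-B_1}$ it becomes a \emph{real} number, which is the object we must control.

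For the identity, I would insert the value of $\overline{K}^{-1}(\w,\b)=\overline{K}^{-1}(\circ(-1,1),\bullet(1,0))$ from Lemma~\ref{lem:Z2entries}, giving $e^{B_2-B_1}\overline{K}^{-1}(\w,\b)=1-\tfrac1\pi\arg(\Omega_c-1)-\tfrac1\pi\Im\Omega_c$. Independently, the single–edge marginal $\pi^{\mathbb Z^2}_\rho[e_1]=\overline{K}(\b,\tilde{\w})\,\overline{K}^{-1}(\tilde{\w},\b)$, evaluated from the second entry of Lemma~\ref{lem:Z2entries}, equals $1-\tfrac1\pi\arg(\Omega_c-1)$, so that $e^{B_2-B_1}\overline{K}^{-1}(\w,\b)=\pi^{\mathbb Z^2}_\rho[e_1]-\tfrac1\pi\Im\Omega_c$. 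Writing $\pi^{\mathbb Z^2}_\rho[\tilde{e}_0^c,e_1]=\pi^{\mathbb Z^2}_\rho[e_1]-\pi^{\mathbb Z^2}_\rho[\tilde{e}_0,e_1]$, both sides of the claimed identity collapse to $\pi^{\mathbb Z^2}_\rho[e_1]-2\pi^{\mathbb Z^2}_\rho[\tilde{e}_0,e_1]$; crucially, the still‑uncomputed two‑edge probability cancels, so this part is immediate.

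For the closed form of $\Im\Omega_c$, I would set $\Omega_c=X+\mathrm{i}Y$ with $Y>0$ and feed in Lemma~\ref{lem:Z2slopes}, which, with $\psi_i=(\rho_i+\tfrac12)\pi$, reads $\psi_1=\arg\Omega_c$ and $\psi_2=\arg(\Omega_c-1)-\arg(\Omega_c+1)$. The first gives $X=Y\cot\psi_1$, hence $X^2+Y^2=Y^2/\sin^2\psi_1$; the tangent–subtraction formula applied to the second gives $\tan\psi_2=2Y/(X^2+Y^2-1)$. Eliminating $X^2+Y^2$ yields the quadratic $\tfrac{\tan\psi_2}{\sin^2\psi_1}Y^2-2Y-\tan\psi_2=0$, whose roots have product $-\sin^2\psi_1<0$; selecting the unique positive root and simplifying the surd reproduces exactly the stated expression for $\Im\Omega_c$.

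The inequality \eqref{eq:positivity} is the crux. By the identity just proved, its left-hand side equals the real number $\pi^{\mathbb Z^2}_\rho[e_1]-2\pi^{\mathbb Z^2}_\rho[\tilde{e}_0,e_1]-\tfrac1\pi\Im\Omega_c$, so the task is to show $\pi^{\mathbb Z^2}_\rho[e_1]-2\pi^{\mathbb Z^2}_\rho[\tilde{e}_0,e_1]<\tfrac1\pi\Im\Omega_c$. The sign of $\overline{K}^{-1}(\w,\b)$ alone does not decide this (it is positive, e.g., when $\Omega_c$ lies near the real axis beyond $1$), so one genuinely has to compute the two-edge probability. Using \eqref{eq:localstatsG} with $\tilde{e}_0=(\bullet(0,0),\circ(-1,1))$ and $e_1=(\bullet(1,0),\circ(0,1))$, it equals $-e^{2B_2}$ times the $2\times2$ determinant of $\overline{K}^{-1}$ on the four entries $\overline{K}^{-1}(\circ(-1,1),\bullet(0,0))$, $\overline{K}^{-1}(\circ(-1,1),\bullet(1,0))$, $\overline{K}^{-1}(\circ(0,1),\bullet(0,0))$, $\overline{K}^{-1}(\circ(0,1),\bullet(1,0))$. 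Two of these are supplied by Lemma~\ref{lem:Z2entries}; the remaining two I would evaluate by the same single–integral method as in Lemma~\ref{appendixlemma}, taking care of the winding around the poles at $z=0,1$ and the branch of the argument (the naive antiderivative misses integer multiples of a residue, as a cross-check against \eqref{eq:Zslope1} and Lemma~\ref{lem:Z2slopes} makes clear). Substituting and again using $\psi_1=\arg\Omega_c$, $\psi_2=\arg(\Omega_c-1)-\arg(\Omega_c+1)$ reduces \eqref{eq:positivity} to a trigonometric inequality in $(\psi_1,\psi_2)$ over $P^{\mathbb Z^2}$, which I would verify directly. I expect the main obstacle to be exactly this last step: pinning down the correct branches in the two new inverse–Kasteleyn entries and then simplifying the determinant into a form in which the inequality becomes transparent.
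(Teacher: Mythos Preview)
Your treatment of the first identity and of the closed form for $\Im\Omega_c$ is essentially identical to the paper's and is correct.

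For the inequality \eqref{eq:positivity}, you correctly identify that the crux is computing $\pi^{\mathbb Z^2}_\rho[\tilde e_0,e_1]$ via the $2\times 2$ determinant of $\overline{K}^{-1}$ and then verifying an explicit inequality about $\Omega_c$. However, your final step --- ``reduce to a trigonometric inequality in $(\psi_1,\psi_2)$ and verify directly'' --- is left unspecified, and this is precisely where the work lies. Note that the resulting expression (the paper's \eqref{eq:bracket}) involves $\arg(\Omega_c-1)$ on its own, not only the combination $\arg(\Omega_c-1)-\arg(\Omega_c+1)=\psi_2$, so passing to $(\psi_1,\psi_2)$ does not obviously render the inequality transparent; the transcendental mixture of $\arg$'s and $\Im\Omega_c$ remains.

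The paper's route for this last step is different and more structured. Rather than converting to $(\psi_1,\psi_2)$, it works in polar coordinates $\Omega_c=re^{\I\theta}$, writes $\phi=\arg(\Omega_c-1)$ as a function of $(r,\theta)$, sets $Q=-\pi^2$ times the bracket, and proves $Q>0$ by a \emph{two-layer monotonicity argument in $r$}: first $\lim_{r\to 0}Q=0$ and $\frac{dQ}{dr}=\frac{\sin\theta}{|\Omega_c-1|^2}\,P$ with $P\ge 0$; then $\lim_{r\to 0}P=0$ and $\frac{dP}{dr}\ge 0$, the latter checked by a short case split $\theta\in[\pi/2,\pi)$ versus $\theta\in[0,\pi/2)$. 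The enabling computational identity is $\frac{d\phi}{dr}=-\sin\theta/|\Omega_c-1|^2$. Your anticipated obstacle (pinning down branches in the two extra $\overline{K}^{-1}$ entries) is a genuine bookkeeping point but not the real difficulty; the non-obvious idea missing from your outline is this monotonicity-in-$r$ argument.
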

	
\begin{proof}	
	By noticing that
	\begin{equation}
\frac2 {e^{B_1-B_2}}\frac{ \bar K({\b_1} ,\tilde{\w})}{ \bar K({\b_1}, {\w}) \bar K({\b} ,\tilde{\w})} =
		\frac2 {e^{B_1-B_2}} \frac{e^{B_1+B_2}}{\mathrm{i}^2 e^{2 B_2}}= -2
	\end{equation}
and	
	\begin{equation}
- \pi^{\mathbb Z^2}_\rho[\tilde{e}_0,e_1]+ \pi^{\mathbb Z^2}_\rho[\tilde{e}_0^c,e_1]=-2 \pi^{\mathbb Z^2}_\rho[\tilde{e}_0,e_1]+ \pi^{\mathbb Z^2}_\rho[e_1],
	\end{equation}
	the first equation follows by comparing $\frac1 {e^{B_1-B_2}} \bar{K}^{-1}(\circ(-1,1),\bullet(1,0))$ and $ \pi^{\mathbb Z^2}_\rho[e_1]=\overline{K}(\bullet(1,0),\circ(0,1)) \overline{K}^{-1}(\circ(0,1),\bullet(1,0))=(\mathrm{i} e^{B_2})\overline{K}^{-1}(\circ(0,1),\bullet(1,0))$ which are both given in Lemma~\ref{lem:Z2entries}.

To verify~\eqref{eq:positivity}, we have using Lemma~\ref{lem:Z2entries} and \eqref{eq:localstatsG} to compute $ \pi^{\mathbb Z^2}_\rho[\tilde{e}_0,e_1]$
	\begin{equation}
 \begin{split}
 \label{eq:bracket}
			&\frac1 {e^{B_1-B_2}}\left( \bar{K}^{-1}({\w},{\b}) + 2\frac{ \bar K({\b_1} ,\tilde{\w})}{ \bar K({\b_1}, {\w}) \bar K({\b} ,\tilde{\w})} \pi^{\mathbb Z^2}_\rho[\tilde{e}_0,e_1] \right)\\
			&=-\frac{ (\pi -\arg(\Omega_c-1))(\pi-2\arg(\Omega_c))}{\pi^2} - \frac{\pi -2\arg(\Omega_c-1)+2 \arg (\Omega_c)}{\pi^2}\Im \Omega_c .
		\end{split}
	\end{equation}

Denote by $Q=-\pi^2 \eqref{eq:bracket}$. Let $\Omega_c=r e^{\I \theta}$ and $\phi=\phi(r,\theta)=\arg(\Omega_c-1)$. First notice that $\lim_{r\to 0} Q=0$ since $\phi\to\pi$. To see \eqref{eq:positivity} it is thus enough to verify $\frac{dQ}{dr}\geq 0$. Using $\frac{d\phi}{dr}=-\frac{\sin\theta}{|\Omega_c-1|^2}$ we have
\begin{equation}
\frac{dQ}{dr} = \frac{\sin\theta}{|\Omega_c-1|^2} P,\textrm{ with } P=\pi-2\theta+2 r \sin\theta+(\pi+2\theta-2\phi)|\Omega_c-1|^2.
\end{equation}
If we see that $P\geq 0$, then also $\frac{dQ}{dr}\geq 0$. Now, $\lim_{r\to 0} P = 0$, thus it is enough to verify $\frac{dP}{dr}\geq 0$. Using $\frac{d |\Omega_c-1|^2}{dr}=2(r-\cos\theta)$, we have
\begin{equation}\label{eqA.14}
\frac{dP}{dr}=4\sin\theta+2(\pi+2\theta-2\phi)(r-\cos\theta).
\end{equation}
For $\theta\in [\pi/2,\pi)$, $\phi-\theta<\pi/2$ and thus both terms in \eqref{eqA.14} are positive. For $\theta\in [0,\pi/2)$, $\pi+2\theta-2\phi$ and $r-\cos\theta$ are both increasing in $r$ with $\lim_{r\to 0}(\pi+2\theta-2\phi)=2\theta-\pi<0$ and $\lim_{r\to 0}(r-\cos\theta)=-\cos\theta<0$. Also, at $r=\cos\theta$ one has $\pi+2\theta-2\phi=0$. This implies that both term in the rhs.~of~\eqref{eqA.14} are positive, ending the proof of \eqref{eq:positivity}.

Finally, to find $\Im \Omega_c$ notice that from Lemma~\ref{lem:Z2slopes} we have $\arg \Omega_c= \psi_1=(\rho_1+1/2)\pi$ and
\begin{equation}
 \psi_2=(\rho_2+1/2)\pi= \arg( \Omega_c-1)-\arg( \Omega_c+1)= \arg((\Omega_c-1)/(\Omega_c+1))
\end{equation}
from which $\tan\psi_2=2\frac{\Im \Omega_c}{|\Omega_c|^2-1}$.
By using $|\Omega_c|\sin \psi_1 =\Im \Omega_c$ we have
\begin{equation}
\tan \psi_2= 2 \frac{ \Im \Omega_c}{ (\frac{\Im \Omega_c}{\sin \psi_1})^2 -1} ,
\end{equation}
and we can solve for $\Im \Omega_c$ as required.
\end{proof}

\section{Hessian of the speed of growth for square lattice}\label{app:Hessian}
To verify that the model belongs to the anisotropic KPZ class of
growth models in $2+1$ dimensions, one needs to verify that the
determinant of the Hessian of $v^{\mathbb Z^2}$ is $\leq 0$. To do
this, consider the speed of growth as function of
$\psi_1,\psi_2\in[0,\pi]^2$ given in (\ref{vz2}). Denote by
${\rm Hess}(\psi_1,\psi_2)$ the Hessian of
$v^{\mathbb Z^2}(\psi_1,\psi_2)$.  An explicit (a bit lengthy)
computation gives that $\det({\rm Hess})$ equals
\begin{equation}
W(\psi_1,\theta) = \frac{-5-e^{4\theta}(2+e^{2\theta})^2+2e^{2\theta}[(2+e^{2\theta})\cos(4\psi_1)+4 (1+\sinh(2\theta))\cos(2\psi_1)]}{(2\cosh(\theta))^4}
\end{equation}
where we used the variables $\sinh(\theta):=\sin(\psi_1)/\tan(\psi_2)$. Now $\theta$ spans all $\R$. For any fixed $\theta\in\R$, we have that
$W(\psi_1,\theta)=W(\pi-\psi_1,\theta)$, thus we can restrict to $\psi_1\in[0,\pi/2]$.

A computation gives
\begin{equation}
\begin{aligned}
W(0,\theta)&=-\frac{(e^{4\theta}+2e^{2\theta}-3)^2}{(2\cosh(\theta))^4}\leq 0\textrm{ with }=0\textrm{ only for }\theta=0,\\
W(\pi/2,\theta)&=-4 e^{4\theta}< 0,\\
W(\psi_1,-\infty)&=0,\quad W(\psi_1,\infty)=-\infty.
\end{aligned}
\end{equation}
Thus at the boundary of the domain for $\psi_1$ and $\theta$ the
Hessian is $\leq 0$. Assume that there is a point inside the domain
where $W>0$. Then there is a maximum inside the domain where
$\frac{dW}{d\psi_1}=\frac{dW}{d\theta}=0$. The only possible solutions
of $\frac{dW}{d\psi_1}=0$ for $\psi_1\in (0,\pi/2)$ is
$\cos(\psi_1)=\zeta:=\cosh(\theta)/\sqrt{2+e^{2\theta}}$. With this
value of $\psi_1$, we have however
$\frac{dW}{d\theta}=-\frac{8e^{5\theta}\cosh(\theta)(3+2\cosh(2\theta))}{(2+e^{2\theta})^2}<0$. Thus
there is no maximum for
$(\psi_1,\theta)\in (0,\pi/2)\times (-\infty,\infty)$.


\end{document}